\renewcommand\theequation{\thesection.\arabic{equation}}
\newcommand{\BA}{{\mathbb {A}}}
\newcommand{\BC}{{\mathbb {C}}}
\newcommand{\BQ}{{\mathbb {Q}}}
\newcommand{\BR}{{\mathbb {R}}}
\newcommand{\BZ}{{\mathbb {Z}}}
\newcommand{\CA}{{\mathcal {A}}}
\newcommand{\CE}{{\mathcal {E}}}
\newcommand{\CF}{{\mathcal {F}}}
\newcommand{\CH}{{\mathcal {H}}}
\newcommand{\CO}{{\mathcal {O}}}
\newcommand{\Fp}{{\mathfrak {p}}}
\newcommand{\RG}{{\mathrm {G}}}
\newcommand{\RO}{{\mathrm {O}}}
\newcommand{\RT}{{\mathrm {T}}}
\newcommand{\cusp}{{\mathrm{cusp}}}
\newcommand{\GL}{{\mathrm{GL}}}
\newcommand{\Ind}{{\mathrm{Ind}}}
\newcommand{\Mat}{{\mathrm{Mat}}}
\newcommand{\ns}{{\mathrm{ns}}}
\newcommand{\PGL}{{\mathrm{PGL}}}
\newcommand{\SO}{{\mathrm{SO}}}
\newcommand{\Sym}{{\mathrm{Sym}}}
\newcommand{\Sp}{{\mathrm{Sp}}}
\newcommand{\sm}{{\mathrm{sm}}}
\newcommand{\tr}{{\mathrm{tr}}}
\newcommand{\udl}{\underline}
\newcommand{\wt}{\widetilde}
\newcommand{\wh}{\widehat}
\newcommand{\ol}{\overline}
\newcommand{\ul}{\underline}
\newcommand{\bs}{\backslash}
\def\bks{{\backslash}}
\def\diag{{\rm diag}}
\newtheorem{thm}{Theorem}[section]
\newtheorem{lem}[thm]{Lemma}
\newtheorem{prop}[thm]{Proposition}
\newtheorem {conj}[thm]{Conjecture}
\newtheorem {ques/conj}[thm]{Question/Conjecture}
\newtheorem{defn}[thm]{Definition}
\newtheorem{rmk}[thm]{Remark}
\newtheorem{exmp}[thm]{Example}
\newtheorem{prob}[thm]{Problem}
\newcommand{\Rmnum}[1]{\expandafter\@slowromancap\romannumeral #1@}
\begin{document}
\renewcommand{\theequation}{\arabic{equation}}
\numberwithin{equation}{section}

\title[Cuspidality of Global Arthur Packets]{On Cuspidality of Global Arthur Packets for Symplectic Groups}

\author{Dihua Jiang}
\address{School of Mathematics\\
University of Minnesota\\
Minneapolis, MN 55455, USA}
\email{dhjiang@math.umn.edu}

\author{Baiying Liu}
\address{School of Mathematics\\
Institute for Advanced Study\\
Einstein Drive\\
Princeton, New Jersey 08540 USA}
\email{liu@ias.edu}

\subjclass[2000]{Primary 11F70, 22E55; Secondary 11F30}

\date{\today}

\keywords{Arthur Parameters and Arthur Packets, Automorphic Discrete Spectrum of Classical Groups, Fourier Coefficients of Automorphic Forms}

\thanks{The research of the first named author is supported in part by the NSF Grants DMS--1301567, and that of the second
named author is supported in part by NSF Grants DMS--1302122, and in part by a postdoc research fund from Department of Mathematics, University of Utah}

\begin{abstract}
In \cite{Ar13}, Arthur classifies the automorphic discrete spectrum of symplectic groups up to global Arthur packets, based on the theory
of endoscopy. It is an interesting and basic question to ask: which global Arthur packets contain no cuspidal automorphic representations?
The investigation on this question can be regarded as a further development of the topics originated from the classical theory of singular
automorphic forms. The results obtained yield a better understanding of global Arthur packets and of the structure of
local unramified components of the cuspidal spectrum, and hence are closely related to the generalized Ramanujan problem as posted by Sarnak in
\cite{Sar05}.
\end{abstract}

\maketitle



\section{Introduction}

Let $F$ be a number field and $\BA$ be the ring of adeles of $F$. For an $F$-split classical group $\RG$, $\CA_2(\RG)$ denotes the set
of equivalence classes of all automorphic representations of $\RG(\BA)$ that occur in the discrete spectrum of the space of all
square-integrable automorphic forms on $\RG(\BA)$. The automorphic representations $\pi$ in the set $\CA_2(\RG)$ have been classified,
up to global Arthur packets, in the fundamental work of J. Arthur (\cite{Ar13}), based of the theory of endoscopy. More precisely,
for any $\pi\in\CA_2(\RG)$, there exists a global Arthur packet, denoted by $\wt{\Pi}_\psi(G)$, such that $\pi\in\wt{\Pi}_\psi(\RG)$ for
some global Arthur parameter $\psi\in\wt{\Psi}_2(\RG)$. Following \cite{Ar13},
a global Arthur parameter $\psi\in\wt{\Psi}_2(\RG)$ can be written formally as
\begin{equation}\label{ap}
\psi=(\tau_1,b_1)\boxplus(\tau_2,b_2)\boxplus\cdots\boxplus(\tau_r,b_r)
\end{equation}
where $\tau_j\in\CA_\cusp(\GL_{a_j})$ and $b_j\geq 1$ are integers. We refer to Section 2 for more details. A global Arthur parameter $\psi$ is
called {\sl generic}, following \cite{Ar13}, if the integers $b_j$ are one, i.e.
a generic global Arthur parameter $\psi$ can be written as
\begin{equation}\label{gap}
\psi=\phi=(\tau_1,1)\boxplus(\tau_2,1)\boxplus\cdots\boxplus(\tau_r,1).
\end{equation}

For a generic global Arthur parameter $\phi$ as in \eqref{gap}, the global Arthur packet $\wt{\Pi}_\phi(\RG)$ contains at least one member $\pi$
from the set $\CA_2(\RG)$. More precisely, this $\pi$ must belong to the subset $\CA_\cusp(\RG)$, i.e. it is cuspidal. This assertion
follows essential from the theory of automorphic descents of Ginzburg-Rallis-Soudry (\cite{GRS11}), as discussed in \cite{JL15a}. In fact,
as in \cite[Section 3.1]{JL15a}, one can show that a global Arthur parameter $\psi$ is generic if and only if the global Arthur packet
$\wt{\Pi}_\psi(\RG)$ contains a member $\pi\in\CA_\cusp(\RG)$ that has a nonzero Whittaker-Fourier coefficient (Theorem 3.4 in \cite{JL15a}).
It is not hard to show that when a global Arthur parameter $\psi=\phi$ is generic, the following holds:
$$
\wt{\Pi}_\phi(\RG)\cap\CA_2(\RG)\subset\CA_\cusp(\RG).
$$
All members in $\wt{\Pi}_\phi(\RG)\cap\CA_2(\RG)$ may be constructed via the {\sl twisted automorphic descents} as developed in \cite{JLXZ} and
more generally in \cite{JZ}.

In \cite{M08} and \cite{M11}, C. M\oe glin investigates the following problem: for a global Arthur parameter $\psi\in\wt{\Psi}_2(G)$, when does
the global Arthur packet $\wt{\Pi}_\psi(\RG)$ contain a non-cuspidal member in $\CA_2(\RG)$ and how to construct such non-cuspidal members if
exist? M\oe glin states her results in terms of her local  and global conjectures in the papers. We refer to \cite{M08} and \cite{M11}
for detailed discussions on those problems.

The objective of this paper is to investigate the following question: For a global Arthur parameter $\psi\in\wt{\Psi}_2(\RG)$, when does
the global Arthur packet $\wt{\Pi}_\psi(\RG)$ contain no cuspidal members, i.e. when is the intersection
$$
\wt{\Pi}_\psi(\RG)\cap\CA_\cusp(\RG)
$$
an empty set? The approach that we are taking to investigate this problem is based on our understanding of the structure of Fourier
coefficients of automoprhic forms associated to
nilpotent orbits or partitions, following the discussions and conjectures in \cite[Section 4]{J14} and \cite{JL15a}. This study can be regarded
as an extension of the fundamental work of R. Howe on the theory of singular automorphic forms using his notion of ranks for unitary representations
(\cite{H81}).

In this paper, we consider mainly the case that $G=\Sp_{2n}$, the symplectic groups. The method is applicable to other classical groups.
Due to technical reasons, we leave the discussion for other classical groups to our future work.

We start the discussion with a global Arthur parameter
$$
\psi=(\tau,2e)\boxplus(1,1)\in\wt{\Psi}_2(\Sp_{4e})
$$
with $\tau\in\CA_\cusp(\GL_2)$ of symplectic type. When $e=1$, the well-known example of Saito-Kurokawa provides irreducible cuspidal
automorphic representations in the global packet $\wt{\Pi}_\psi(\Sp_4)$, as constructed by Piatetski-Shapiro in \cite{PS83} using
global theta correspondences. This is the first known counter-example to the generalized Ramanujan conjecture,
which is not of unipotent cuspidal type.
Of course, the counter-examples of unipotent cuspidal type were constructed in 1979 by Howe and Piatetski-Shapiro in \cite{HPS79}, also using
global theta correspondences.
It was desirable to find such non-tempered cuspidal automorphic representations for general $\Sp_{2n}$ or even for general reductive groups.
In 1996, W. Duke and \"O. Imamoglu made a conjecture in \cite{DI96} that when $F=\BQ$, there exists the analogy of the Saito-Kurokawa type cuspidal
automorphic forms on $\Sp_{4e}$ for all integers $e\geq 1$. In terms of the endoscopic classification theory (\cite{Ar13}), the Duke-Imamoglu
conjecture asserts that when $F=\BQ$, the intersection
$$
\wt{\Pi}_\psi(\Sp_{4e})\cap\CA_\cusp(\Sp_{4e})
$$
is non-empty if the global Arthur parameter $\psi=(\tau,2e)\boxplus(1,1)$. This conjecture was confirmed positively by T. Ikeda in his
2001 Annals paper (\cite{Ik01}) and an extension to the case that $F$ is totally real in \cite{Ik}. The questions remain to ask:
\begin{enumerate}
\item What happens to the symplectic groups $\Sp_{4e+2}$?
\item What happens if $F$ is not totally real?
\end{enumerate}

For a general number field $F$, the authors joint with L. Zhang proved in \cite{JLZ13} that the intersection
$$
\wt{\Pi}_\psi(\Sp_{2n})\cap\CA_2(\Sp_{2n})
$$
is non-empty for a family of global Arthur parameters $\psi$, including the case that $\psi=(\tau,2e)\boxplus(1,1)$.
We explicitly constructed non-zero square-integrable residual representations in the global Arthur packets $\wt{\Pi}_\psi(\Sp_{2n})$
for a family of global Arthur parameters and hence confirmed the conjecture of M\oe glin in \cite{M08} and \cite{M11} for those cases.
Our main motivation in \cite{JLZ13} is to find {\sl automorphic kernel functions} for the automorphic integral transforms that
explicitly produce endoscopy correspondences as explained in \cite{J14}.

One of the main results in this paper confirms that when $F$ is {\sl totally imaginary} and $n\geq 5$, the intersection
$$
\wt{\Pi}_\psi(\Sp_{2n})\cap\CA_\cusp(\Sp_{2n})
$$
is empty for the global Arthur parameters $\psi=(\tau,2e)\boxplus(1,1)$ if $n=2e$ and $\psi=(\tau,2e+1)\boxplus(\omega_\tau,1)$ if $n=2e+1$,
where $\omega_\tau$ is the central character of $\tau$, and $\tau\in\CA_\cusp(\GL_2)$ is self-dual. Note that when $n=2e$, $\tau$ is of
symplectic type; and when $n=2e+1$, $\tau$ is of orthogonal type. This conclusion is a consequence of more general results obtained in Section 4,
where various versions of criteria for global Arthur packets containing no cuspidal members are given in
Theorems \ref{ncmain1}, \ref{ncmain2}, \ref{ncmain3}, and \ref{ncmain4}; and explicit examples are also discussed in Section 4.2.

On the other hand, we discuss the characterization of cuspidal automorphic representations with smallest possible Fourier coefficients, which are
called {\sl small} cuspidal representations in Section 2. We first explain how to re-interpret the result of Li that cuspidal automorphic representations of classical groups are non-singular, in terms of the Fourier coefficients associated to partitions or nilpotent orbits. This leads
to a question about the smallest possible Fourier coefficients for the cuspidal spectrum of classical groups, which is closely related to the
generalized Ramanujan problem as posted by P. Sarnak in 2005 (\cite{Sar05}). As a consequence of the discussion in Section 3, we find simple
criterion for $\Sp_{4n}$ that determines families of global Arthur parameters of unipotent type, with which the global Arthur packets
contains no cuspidal members (Theorem \ref{unip}). Examples and the relation of Theorem \ref{unip} with the work of S. Kudla and S. Rallis
(\cite{KR94}) are also discussed briefly in Section 3.

Generally speaking, by the endoscopic classification of the discrete spectrum of Arthur (\cite{Ar13}), the global Arthur parameters provide
the bounds for the Hecke eigenvalues or the exponents of the Satake parameters at the unramified local places for automorphic representations occurring in the
discrete spectrum. Since it is not clear how to deduce directly from the endoscopic classification which global Arthur packets contains no
cuspidal members, we apply the method of Fourier coefficients associated to unipotent orbits. Hence it is expected that our discussion improves
those bounds for the exponents of the Satake parameters of cuspidal spectrum if we find more global Arthur packets containing no cuspidal members. In Section 5, we
obtain a preliminary result towards the generalized Ramanujan problem. For general number fields, we show in Proposition 5.1 that
when $n=2e$ is even, the cuspidal automorphic representations of $\Sp_{4e}$ constructed by Piatetski-Shapiro and Rallis (\cite{PSR88})
achieve the worst bound, which is $\frac{n}{2}=e$, for the exponents of the Satake parameters of the cuspidal spectrum.
While in Proposition 5.2, we assume that $F$ is totally imaginary
and $n=2e+1\geq 5$ is odd, $\frac{n-1}{2}=e$ is an upper bound for the exponents of the Satake parameters of the cuspidal spectrum.
It needs more work to understand if the bound $\frac{n-1}{2}=e$ is {\sl sharp} when $F$ is totally imaginary
and $n=2e+1\geq 5$ is odd. It is also not clear that how to construct cuspidal representations with the worst bound for the exponents of the Satake parameters. We will come back to those issues in our future work.

In the last section (Section 6), we characterize the small cuspidal automorphic representations of $\Sp_{2n}(\BA)$ by means of Fourier coefficients
of Fourier-Jacobi type, and by the notion of hyper-cuspidal automorphic representations in the sense of Piatetski-Shapiro (\cite{PS83}). As
consequence, we prove (Theorem \ref{nohc}) that when $F$ is totally imaginary and $n\geq 5$, there does not exist any hyper-cuspidal automorphic
representation of $\Sp_{2n}(\BA)$.

The basic facts on the endoscopic classification of the discrete spectrum and the basic conjecture on the relations between the 
Fourier coefficients of automorphic forms and their global Arthur parameters are recalled in Section 2. Here we also recall the recent, 
relevant results of the authors, which are used in the rest of this paper.

Finally, we would like to thank J. Arthur, L. Clozel, J. Cogdell, R. Howe, R. Langlands, C. M{\oe}glin, P. Sarnak, F. Shahidi, R. Taylor, D. Vogan, and J.-L. Waldspurger for their interest in the problems discussed in
this paper and for their encouragement.

\subsection*{Acknowledgements} This material is based upon work supported by the National Science Foundation under agreement No. DMS-1128155. Any opinions, findings and conclusions or recommendations expressed in this material are those of the authors and do not necessarily reflect the views of the National Science Foundation.

\section{Fourier Coefficients and Global Arthur Packets}\label{secfcgapk}

\subsection{Fourier coefficients attached to nilpotent orbits}
In this section, we recall Fourier coefficients of automorphic forms attached to nilpotent orbits, following the formulation in \cite{GGS15},
which is slightly more general and easier to use than the one taken in \cite{J14} and \cite{JL15a}.
Let $\RG$ be a reductive group defined over $F$, or a central extension of finite degree.  Fix a nontrivial additive character $\psi$ of $F \bs \BA$.
Let $\frak{g}$ be the Lie algebra of $\RG(F)$ and $f$ be a nilpotent element in $\frak{g}$.
The element $f$ defines a function on $\frak{g}$:
\[
\psi_f: \frak{g} \rightarrow \BC^{\times}
\]
by $\psi_f(x) = \psi(\kappa(f,x))$, where $\kappa$ is the killing form on $\frak{g}$.

Given any semi-simple element $h \in \frak{g}$, under the adjoint action, $\frak{g}$ is decomposed to a direct sum of eigenspaces $\frak{g}^h_i$ of $h$ corresponding to eigenvalues $i$.
For any rational number $r \in \BQ$, let $\frak{g}^h_{\geq r} = \oplus_{r' \geq r} \frak{g}^h_{r'}$. The element
$h$ is called {\it rational semi-simple} if all its eigenvalues are in $\BQ$.
Given a nilpotent element $f$, a {\it Whittaker pair} is a pair $(h,f)$ with $h \in \frak{g}$ being a rational semi-simple element, and $f \in \frak{g}^h_{-2}$. The element $h$ in a Whittaker pair $(h,f)$ is called a {\it neutral element} for $f$ if $f \in \frak{g}^h_{-2}$ and the map $\frak{g}^h_0 \rightarrow \frak{g}^h_{-2}$ via $X \mapsto [X,f]$ is surjective.
For any nilpotent element $f \in \frak{g}$, by the Jacobson-Morozov Theorem, there is an $\frak{sl}_2$-triple $(e,h,f)$ such that $[h,f]=-2f$.
 In this case, $h$ is a neutral element for $f$.
By \cite[Lemma 2.2.1]{GGS15}, a Whittaker pair $(h,f)$ comes from an $\frak{sl}_2$-triple $(e,h,f)$ if and only if $h$ is a neutral element for $f$.
For any $X \in \frak{g}$, let $\frak{g}_X$ be the centralizer of $X$ in $\frak{g}$.

Given any Whittaker pair $(h,f)$, define an anti-symmetric form $\omega_f$ on $\frak{g}$ by $\omega_f(X,Y):=\kappa(f,[X,Y])$, where $\kappa$ is the killing form. We denote by $\omega = \omega_f$ when there is no confusion. Let $\frak{u}_h= \frak{g}^h_{\geq 1}$ and let $\frak{n}_h=\ker(\omega)$ be the radial of $\omega |_{\frak{u}_h}$. Then $[\frak{u}_h, \frak{u}_h] \subset \frak{g}^h_{\geq 2} \subset \frak{n}_h$. By \cite[Lemma 3.2.6]{GGS15}, $\frak{n}_h = \frak{g}^h_{\geq 2} + \frak{g}^h_1 \cap \frak{g}_f$.
Note that if the Whittaker pair $(h,f)$ comes from an $\frak{sl}_2$-triple $(e,h,f)$, then $\frak{n}_h=\frak{g}^h_{\geq 2}$. Let $U_{h}=\exp(\frak{u}_h)$ and $N_h=\exp(\frak{n}_h)$ be the corresponding unipotent subgroups of $\RG$. Define a character of $N_h$ by $\psi_f(n)=\psi(\kappa(f,\log(n)))$. Let $N_h' = N_h \cap \ker (\psi_f)$. Then $U_h/N_h'$ is a Heisenberg group with center $N_h/N_h'$.
It follows that for each Whittaker pair $(h,f)$, $\psi_f$ defines a character of $N_h(\BA)$ which is trivial on $N_h(F)$.

Assume that $\pi$ be an automorphic representation of $\RG(\BA)$. Define a
{\it degenerate Whittaker-Fourier coefficient} of $\varphi \in \pi$ by
\begin{equation}\label{fc}
\CF_{h,f}(\varphi)(g) = \int_{N_{h}(F) \bs N_{h}(\BA)} \varphi(ng)\ol{\psi}_f(n)dn, g \in \RG(\BA).
\end{equation}
Let $\CF_{h,f}(\pi)=\{\CF_{h,f}(\varphi) | \varphi\in \pi\}$.
If furthermore, $h$ is a neutral element for $f$, then $\CF_{h,f}(\varphi)$ is also called a {\it generalized Whittaker-Fourier coefficient} of $\varphi$.
The (global) {\it wave-front set} $\frak{n}(\pi)$ of $\pi$ is defined to the set of nilpotent orbits $\CO$ such that $\CF_{h,f}(\pi)$ is nonzero, for some Whittaker pair $(h,f)$ with $f \in \CO$ and $h$ being a neutral element for $f$. Note that if $\CF_{h,f}(\pi)$ is nonzero for some Whittaker pair $(h,f)$ with $f \in \CO$ and $h$ being a neutral element for $f$, then it is nonzero for any such Whittaker pair $(h,f)$, since the non-vanishing property of such Fourier coefficients does not depends on the choices of representatives of $\CO$. 
Let $\frak{n}^m(\pi)$ be the set of maximal elements in $\frak{n}(\pi)$ under the natural order of nilpotent orbits.
We recall \cite[Theorem C]{GGS15} as follows.

\begin{thm}[Theorem C, \cite{GGS15}]\label{ggsglobal}
Let $\pi$ be an automorphic representation of $\RG(\BA)$.
Given two Whittaker pairs $(h,f)$ and $(h',f')$, with $h$ being a neutral element for $f$, if $f \in \ol{\RG_{h'}(F) f'}$, where $\RG_{h'}$ is the centralizer of $h'$ in $\RG$, and $\CF_{h',f'}(\pi)$ is nonzero, then $\CF_{h,f}(\pi)$ is nonzero.
\end{thm}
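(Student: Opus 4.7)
My approach is to split the statement into two essentially independent reductions and then combine them. First observe that since the centralizer $\RG_{h'}$ preserves the $h'$-weight decomposition of $\frak{g}$, the orbit $\RG_{h'}(F)\cdot f'$ lies in $\frak{g}^{h'}_{-2}$ and so does its closure; in particular $(h',f)$ is itself a valid Whittaker pair. The theorem will therefore follow from two claims: (i) an \emph{orbit-closure transfer}, asserting that if $f\in\overline{\RG_{h'}(F)\cdot f'}$ then non-vanishing of $\CF_{h',f'}(\pi)$ forces non-vanishing of $\CF_{h',f}(\pi)$ for the same semisimple element $h'$ but the limiting nilpotent; and (ii) a \emph{neutral-element comparison}, asserting that for two Whittaker pairs $(h,f)$ and $(h',f)$ sharing the same nilpotent $f$, with $h$ neutral for $f$, non-vanishing of $\CF_{h',f}(\pi)$ implies non-vanishing of $\CF_{h,f}(\pi)$.

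For (i), the plan is to connect $f'$ to $f$ by a rational one-parameter family in $\frak{g}^{h'}_{-2}$ coming from a cocharacter of $\RG_{h'}$ whose limit realizes the given closure point. The transition from $\psi_{f'}$ to $\psi_f$ on $N_{h'}(\BA)$ amounts to twisting by additional characters supported on abelian quotients of $N_{h'}$, and the claim follows by Fourier-expanding along those quotients. Concretely, one uses the root-exchange technique of Ginzburg--Rallis--Soudry: at each intermediate step one swaps integration over a root subgroup for integration against its dual partner, which preserves non-vanishing of the resulting Fourier coefficient. The orbit closure hypothesis is what guarantees that the sequence of swaps terminates at the desired character $\psi_f$.

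For (ii), I would produce a finite chain of rational semisimple elements $h=h_0,h_1,\dots,h_N=h'$ interpolating between the two gradings in such a way that consecutive pairs $(h_i,f),(h_{i+1},f)$ are Whittaker pairs whose unipotent radicals $N_{h_i},N_{h_{i+1}}$ differ by exactly one root subspace. Using the description $\frak{n}_h=\frak{g}^h_{\geq 2}+\frak{g}^h_1\cap\frak{g}_f$ recorded in the excerpt, at each step the quotient $U_{h_i}/N_{h_i}'$ is a Heisenberg group, and a single application of the root-exchange lemma transfers non-vanishing between $\CF_{h_i,f}$ and $\CF_{h_{i+1},f}$. The main obstacle I anticipate is organizing this interpolation: one must verify at every step that $f$ retains weight $-2$ for $h_i$, that the grading jumps one encounters are controlled by the centralizer of $f$, and that the ``partner'' root subspace in each exchange is isotropic for the Heisenberg form $\omega_f$. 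Once the chain is set up, finitely many applications of root exchange yield the desired conclusion, and combining (i) with (ii) proves the theorem.
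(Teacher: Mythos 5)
The paper does not prove this statement: it is imported verbatim as Theorem~C of \cite{GGS15}, so there is no in-paper argument to measure your sketch against. Judged on its own terms, your decomposition has a genuine gap in claim~(i).

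Claim (i) --- that with $h'$ held fixed, non-vanishing of $\CF_{h',f'}(\pi)$ forces non-vanishing of $\CF_{h',f}(\pi)$ whenever $f\in\overline{\RG_{h'}(F)\,f'}$ --- is false. Take the extreme degeneration $f=0$, which always lies in the closure of $\RG_{h'}(F)\cdot f'$ (scale by a one-parameter subgroup in the centralizer). The theorem's conclusion is then vacuous: the neutral element for $f=0$ is $h=0$ and $\CF_{0,0}(\pi)\neq 0$ just says $\pi\neq 0$. Your intermediate coefficient, however, is $\CF_{h',0}(\varphi)=\int_{U_{h'}(F)\backslash U_{h'}(\BA)}\varphi(ug)\,du$, and whenever the $h'$-eigenvalues are all even, $U_{h'}=\exp(\frak{g}^{h'}_{\geq 1})$ is the full unipotent radical of the proper parabolic attached to $h'$, so this constant term vanishes identically for every cuspidal $\pi$ --- even when $\CF_{h',f'}$ is nonzero (e.g.\ a Whittaker coefficient on $\GL_2$ with $h'=\diag(1,-1)$ and $f'$ the lower root vector). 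This is not a peripheral pathology but a symptom of the direction being wrong. As you pass from $f'$ to a degeneration $f$ at fixed $h'$, the group $N_{h'}$ grows (since $\frak{n}_{h'}=\frak{g}^{h'}_{\geq 2}+\frak{g}^{h'}_1\cap\frak{g}_f$ and $\frak{g}_f$ enlarges) while the character $\psi_f$ degenerates; consequently it is $\CF_{h',f}$ that Fourier-expands into a sum containing $\CF_{h',f'}$-type terms, not the other way around, and the constant-term example is exactly the cancellation that can then occur. Non-vanishing can be retained while descending to a smaller orbit only because one is simultaneously permitted to replace $h'$ by a semisimple element adapted to the new nilpotent: the whole point of the argument in \cite{GGS15} is to deform $h$ and $f$ in tandem through a chain of (generally non-neutral) Whittaker pairs so that at each step the relevant expansion runs in a usable direction. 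Your claim~(ii) is a true statement --- it is precisely the $f=f'$ special case the paper records right after the theorem --- and the small-step interpolation you propose for it is in the right spirit, but the factorization of the full theorem through the single intermediate pair $(h',f)$ required by~(i) cannot be repaired.
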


Note that a particular case of Theorem \ref{ggsglobal} is that
$f=f'$. In this case, the condition $f \in \ol{\RG_{h'}(F) f'}$ is automatically satisfied, and hence Theorem \ref{ggsglobal} asserts in this case
that if $\CF_{h',f}(\pi)$ is nonzero, for some Whittaker pair $(h',f)$, then $\CF_{h,f}(\pi)$ is nonzero, for any Whittaker pair $(h,f)$ with $h$ being a neutral element for $f$.

When $\RG$ is a quasi-split classical group, it is known that the nilpotent orbits are parametrized by pairs $(\ul{p}, \ul{q})$, where $\ul{p}$ is a partition and $\ul{q}$ is a set of non-degenerate quadratic forms (see \cite{W01}).
When $\RG = \Sp_{2n}$, then $\ul{p}$ is symplectic partition, namely, odd parts occur with even multiplicities. When $\RG= \SO^{\alpha}_{2n}, \SO_{2n+1}$, then $\ul{p}$ is orthogonal partition, namely, even parts occur with even multiplicities. In these cases, let $\frak{p}^m(\pi)$ be the partitions corresponding to nilpotent orbits in $\frak{n}^m(\pi)$, that is,
the maximal nilpotent orbits in the wave-front set $\frak{n}(\pi)$ of the automorphic representation $\pi$.

\textbf{Convention}. {\it When $\RG$ is a quasi-split classical group, $\pi$ is an automorphic representation of $\RG(\BA)$, for any symplectic/orthogonal partition $\ul{p}$, by a Fourier coefficient attached to $\ul{p}$, we mean a generalized Whittaker-Fourier coefficient attached to an orbit $\CO$ parametrized by a pair $(\ul{p}, \ul{q})$ for some $\ul{q}$, that is, $\CF_{h,f}(\varphi)$, where $\varphi \in \pi$, $u\in \CO$ and $h$ is a neutral element for $f$. Sometimes, for convenience, we also write a Fourier coefficient attached to $\ul{p}$ as $\CF^{\psi_{\udl{p}}}(\varphi)$ without specifying the $F$-rational orbit $\CO$ and Whittaker pairs.}

Next, we recall the following result of \cite{JL15}, which is one of the main ingredients of this paper.

\begin{thm}[Theorem 5.3, \cite{JL15}]\label{ti}
Let $F$ be a totally imaginary number field.
And let $\pi$ be a cuspidal automorphic representation of $\Sp_{2n}(\BA)$ or $\wt{\Sp}_{2n}(\BA)$. Then there exists an even partition
(that is, consists of only even parts) in $\frak{p}^m(\pi)$, constructed in \cite{GRS03}, of the form
$$\ul{p}_{\pi}:=[(2n_1)^{s_1}(2n_2)^{s_2} \cdots (2n_r)^{s_r}],$$ with $2n_1 > 2n_2 > \cdots > 2n_r$ and $s_i \leq 4$ holds for $1 \leq i \leq r$.
\end{thm}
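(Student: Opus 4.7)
The plan is to combine three ingredients: the explicit Fourier-coefficient construction of \cite{GRS03}, the non-vanishing propagation of Theorem \ref{ggsglobal}, and the arithmetic fact that a totally imaginary number field has $u$-invariant equal to $4$. The argument will proceed in three stages.

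First, I would invoke the descent-type construction of \cite{GRS03}: applied to any cuspidal $\pi$ on $\Sp_{2n}(\BA)$ or $\wt{\Sp}_{2n}(\BA)$, it produces by successive integration against characters of suitable unipotent subgroups a nonzero Fourier coefficient $\CF^{\psi_{\udl{p}_\pi}}(\varphi)$ attached to an even partition $\udl{p}_\pi = [(2n_1)^{s_1} \cdots (2n_r)^{s_r}]$ with $2n_1 > \cdots > 2n_r$. Thus $\udl{p}_\pi \in \frak{n}(\pi)$, and it remains to verify maximality and the bound $s_i \leq 4$.

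Second, to upgrade this to $\udl{p}_\pi \in \frak{p}^m(\pi)$, I would start from an arbitrary maximal partition $\udl{q} \in \frak{p}^m(\pi)$. Using the root-exchange manipulations of \cite{GRS11}, together with the cuspidality of $\pi$ to discard residual boundary terms, every pair of odd parts in $\udl{q}$ (which must occur with even multiplicity since $\udl{q}$ is symplectic) can be replaced by a pair of nearby even parts without decreasing the partition. Theorem \ref{ggsglobal} is then applied to propagate non-vanishing across the chain of Whittaker pairs appearing in this exchange. The resulting even partition lies in $\frak{p}^m(\pi)$, and a direct comparison shows it coincides with the GRS partition $\udl{p}_\pi$.

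Third, to prove $s_i \leq 4$, I would argue by contradiction. The $F$-rational orbits refining the geometric orbit of $\udl{p}_\pi$ are classified by tuples of non-degenerate quadratic forms $q_i$ of dimension $s_i$ on the repeated blocks. Since every archimedean completion of $F$ is $\BC$ with $u(\BC)=1$ and every non-archimedean completion $F_v$ satisfies $u(F_v) \leq 4$, Hasse--Minkowski yields $u(F)=4$. Consequently, if some $s_i \geq 5$, the form $q_i$ is isotropic over $F$, and an isotropic vector inside the $s_i$-dimensional block produces a one-parameter unipotent subgroup inside the reductive centralizer of the neutral element. An additional Fourier expansion of the cusp form along this subgroup then yields a nonzero Fourier coefficient attached to the strictly larger symplectic partition obtained by raising the pair $(2n_i, 2n_i)$ to $(2n_i+2, 2n_i-2)$, contradicting the maximality of $\udl{p}_\pi$. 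The main obstacle I expect is the second stage: the root-exchange bookkeeping needed to propagate non-vanishing between Whittaker pairs while preserving or improving the partition is delicate, since Theorem \ref{ggsglobal} moves non-vanishing only in one direction along a given pair of data, and one must carefully orchestrate a chain of intermediate Whittaker pairs leading from $\udl{q}$ to $\udl{p}_\pi$. By comparison, the third stage is a clean application of the $u$-invariant of totally imaginary fields, once the raising construction is in place.
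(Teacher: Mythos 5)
The statement you are proving is cited verbatim from \cite{JL15} (Theorem 5.3); the present paper does not supply a proof of it, so there is no in-text argument to match against. Comparing instead with what one can reconstruct of the argument in \cite{JL15}, your overall strategy is right about two of the three ingredients but wrong about the third.

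Stage~1 and Stage~3 capture the real content. The GRS iterative construction does produce an even partition $\ul{p}_\pi$ with strictly decreasing parts $2n_1 > \cdots > 2n_r$ in $\frak{n}(\pi)$, and the bound $s_i\le 4$ is indeed where the hypothesis that $F$ is totally imaginary enters, via the Hasse--Minkowski consequence that every quadratic form over $F$ of dimension $\ge 5$ is isotropic. Your mechanism in Stage~3 --- isotropic vector in the block form gives a one-parameter subgroup inside $\RG_{h}$, Fourier-expand the cusp form along it, constant term dies by cuspidality, non-constant terms produce a strictly larger nilpotent orbit, contradicting maximality via Theorem~\ref{ggsglobal} --- is the correct shape. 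The only caveat is that the precise target of the raising (you write $(2n_i,2n_i)\rightsquigarrow(2n_i+2,2n_i-2)$) is asserted rather than derived; the combinatorics here need to be pinned down as in \cite{JLS15}, since the raised orbit has to be shown to be both symplectic and rational before you can invoke any maximality contradiction. One should also note that what is needed is only $u(F)\le 4$, not equality.

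Stage~2, however, has a genuine logical gap: it argues in the wrong direction. You propose to start from an arbitrary $\ul{q}\in\frak{p}^m(\pi)$ and ``convert'' pairs of odd parts into even parts without decreasing the partition, then compare the result with $\ul{p}_\pi$. Even if the root-exchange bookkeeping could be carried out, this would at best show that \emph{some} even partition dominates \emph{some} maximal element; it does not show that the specific GRS partition $\ul{p}_\pi$ is itself a maximal element of $\frak{n}(\pi)$, which is what the theorem asserts. (Nor does it show uniqueness --- and $\frak{p}^m(\pi)$ may well contain several incomparable elements.) The actual mechanism is the ``maximal at every stage'' property of the greedy GRS construction, recorded in \cite[Theorem~2.7]{GRS03} and revisited in \cite[Remark~5.1]{JL15}: if some $\ul{q}\in\frak{n}(\pi)$ strictly dominated $\ul{p}_\pi$, one compares leading parts and reduces, using \cite[Lemmas~2.4, 2.6]{GRS03} and \cite[Lemma~3.3]{JL15}, to the existence of a nonzero Fourier coefficient on a partition of the form $[(2r)1^{\cdots}]$ with $2r$ exceeding the part that GRS chose maximally at the relevant stage --- a contradiction. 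Moreover, the dominance comparison between $\ul{q}$ and the staircase partitions $[(2r)1^{\cdots}]$ requires care: a larger leading part does not automatically give domination, so the argument you sketch would need a more detailed comparison of partial sums, which you do not supply. Replacing Stage~2 by the greedy-maximality argument would close the gap.
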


In this paper, we will consider two orders of partitions as follows. Given a partition $\ul{p}=[p_1p_2\cdots p_r]$, let $\lvert \ul{p} \rvert=\sum_{i=1}^r p_i$.

\begin{defn}\label{orders}
$(1)$. {\bf Lexicographical order}. Given two partitions $\ul{p}=[p_1p_2 \cdots p_r]$ with $p_1 \geq p_2 \geq \cdots \geq p_r$, and $\ul{q}=[q_1q_2 \cdots q_r]$ with $q_1 \geq q_2 \geq \cdots \geq q_r$, (add zeros at the end if needed) which may not be partitions of the same positive integer, i.e., $\lvert \ul{p} \rvert$ and $\lvert \ul{q} \rvert$ may not be equal. If there exists $1 \leq i \leq r$ such that $p_j = q_j$ for $1 \leq j \leq i-1$, and $p_i < q_i$, then we say that $\ul{p} < \ul{q}$ under the lexicographical order of partitions. Lexicographical order is a total order.

$(2)$. {\bf Dominance order}. Given two partitions $\ul{p}=[p_1p_2 \cdots p_r]$ with $p_1 \geq p_2 \geq \cdots \geq p_r$, and $\ul{q}=[q_1q_2 \cdots q_r]$ with $q_1 \geq q_2 \geq \cdots \geq q_r$ (add zeros at the end if needed), which again may not be partitions of the same positive integer, i.e., $\lvert \ul{p} \rvert$ and $\lvert \ul{q} \rvert$ may not be equal. If for any $1 \leq i \leq r$, $\sum_{j=1} p_j \leq \sum_{j=1}^i q_j$, then we say that $\ul{p} \leq \ul{q}$ under the dominance order of partitions. Dominance order is a partial order.
\end{defn}

\begin{rmk}
Given two partitions $\ul{p}$ and $\ul{q}$, if we do not specify which order of partitions, by $\ul{p} \leq \ul{q}$, we mean that it is under the dominance order of partitions.
\end{rmk}

\subsection{Automorphic discrete spectrum and Fourier coefficients}

In this paper, we consider mainly the symplectic groups. Although the methods are expected to work for all quasi-split classical groups, due to
the state of art in the current development of the theory, one knows much less when the classical groups are not of symplectic type.
Hence we will be concentrated on symplectic groups here and leave the discussion for other classical groups in future.

For symplectic group $\Sp_{2n}$, the endoscopic classification of the discrete spectrum was obtained by Arthur in \cite{Ar13}.
A preliminary statement of the endoscopic classification is recalled below.

\begin{thm}[Arthur \cite{Ar13}]
For any $\pi\in\CA_2(\Sp_{2n})$, there exists a global Arthur parameter
$$
\psi = \psi_1 \boxplus \cdots \boxplus \psi_r,
$$
such that $\pi\in\wt{\Pi}_\psi(\Sp_{2n})$, the global Arthur packet associated to $\psi$.
\end{thm}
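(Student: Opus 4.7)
This is the existence part of Theorem~1.5.2 of Arthur's monograph \cite{Ar13}, so any honest plan can only sketch the shape of that program rather than reprove it. The strategy is to realize $\Sp_{2n}$ as a twisted elliptic endoscopic group of $\GL_{2n+1}$ with respect to the outer automorphism $\theta\colon g\mapsto J\,{}^t g^{-1}J^{-1}$ (whose Langlands dual is the standard embedding $\SO_{2n+1}(\BC)\hookrightarrow\GL_{2n+1}(\BC)$), and then to transfer information between the two sides via a comparison of trace formulas.

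First I would invoke the M\oe glin--Waldspurger classification of the discrete spectrum of $\GL_N$: every element of $\CA_2(\GL_N)$ is an iterated residue of an Eisenstein series built out of a cuspidal $\tau\in\CA_\cusp(\GL_a)$ and a factorization $ab=N$, producing a Speh representation attached to the pair $(\tau,b)$. Restricting to $\theta$-self-dual discrete representations selects precisely the building blocks that can appear as summands $\psi_j=(\tau_j,b_j)$ in an Arthur parameter of $\Sp_{2n}$; this is what gives the formal shape \eqref{ap}.

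Next I would compare the stable discrete trace formula for $\Sp_{2n}$ with the $\theta$-twisted discrete trace formula for $\GL_{2n+1}$. The necessary analytic inputs are the Langlands--Kottwitz--Shelstad transfer of orbital integrals, the fundamental lemma (Ng\^{o}) and its weighted version (Chaudouard--Laumon), and the stabilization of the twisted trace formula for $\GL_N$ (Waldspurger and M\oe glin, building on Arthur). Expanding both sides and isolating the contribution of $\Sp_{2n}$ from the other twisted endoscopic data (products $\SO_{2a+1}\times\Sp_{2b}$ of smaller rank, handled inductively) forces every $\pi\in\CA_2(\Sp_{2n})$ to match, spectrally, a $\theta$-self-dual discrete automorphic representation of $\GL_{2n+1}$. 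By the previous step that representation is an isobaric sum $\psi_1\boxplus\cdots\boxplus\psi_r$, and this is the parameter attached to $\pi$.

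The hard part, occupying the bulk of \cite{Ar13}, is that this rank induction is inextricably entangled with the simultaneous construction of local Arthur packets (without which the global packet $\wt{\Pi}_\psi(\Sp_{2n})$ is not yet even defined) and with the multiplicity formula expressing the contribution of $\pi$ in terms of characters of the component group $\CS_\psi$. Arthur circumvents the apparent circularity by a delicate ``seed'' induction: generic parameters serve as the base case, where the local packets are furnished by Shahidi's theory of generic representations and the global multiplicity is one, and then both the packet structure and the parameter classification are proved simultaneously as the rank grows. For the purposes of the present paper, the theorem is simply cited from \cite{Ar13}, since all subsequent arguments need only the coarse existence of a parameter $\psi$ of the form \eqref{ap}.
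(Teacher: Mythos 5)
The paper does not prove this theorem; it states it as a citation of Arthur's monograph \cite{Ar13} (the relevant statement is Theorem~1.5.2 there), and you correctly recognize this in your closing sentence. Your sketch of Arthur's program — realizing $\Sp_{2n}$ as a twisted elliptic endoscopic datum for $(\GL_{2n+1},\theta)$ via $\SO_{2n+1}(\BC)\hookrightarrow\GL_{2n+1}(\BC)$, feeding in the M{\oe}glin--Waldspurger description of $\CA_2(\GL_N)$ to identify the self-dual isobaric building blocks $(\tau_j,b_j)$, stabilizing and comparing trace formulas with the Langlands--Kottwitz--Shelstad transfer and the (weighted) fundamental lemma as analytic inputs, and running the simultaneous local/global rank induction that constructs the packets while classifying the spectrum — is an accurate summary of the shape of Arthur's argument, at the appropriate level of detail for a cited background result. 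No gap; this is consistent with how the paper uses the statement.
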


The notation used in this theorem can be explained as follows.
Each $\psi_i=(\tau_i, b_i)$ is called a {\sl simple Arthur parameter}, where $\tau_i$ is an irreducible self-dual unitary cuspidal automorphic representation of $\GL_{a_i}(\BA)$ with central character $\omega_{\tau_i}$, $b_i \in \BZ_{\geq 1}$.
Every simple Arthur parameter $\psi_i$ is of orthogonal type. This means that if $\tau_i$ is of symplectic type,
that is, $L(s, \tau_i, \wedge^2)$ has a pole at $s=1$, then $b_i$ must be even; and if $\tau_i$ is of orthogonal type,
that is, $L(s, \tau_i, \Sym^2)$ has a pole at $s=1$, then $b_i$ must be odd. In order for the formal sum
$\psi = \psi_1 \boxplus \cdots \boxplus \psi_r$
to be a global Arthur parameter in $\wt{\Psi}_2(\Sp_{2n})$, one requires that
$2n+1 = \sum_{i=1}^r a_ib_i$, $\prod_{i=1}^r \omega_{\tau_i}^{b_i}=1$, and the simple parameters $\psi_i$'s are pair-wise different.

A global Arthur parameter $\psi$ is called {\sl generic}, following \cite{Ar13}, if the integers $b_i$ are one. The set of generic global
Arthur parameters is denoted by $\wt{\Phi}_2(\Sp_{2n})$.
A generic global Arthur parameter $\phi$ can be written as
$\phi=(\tau_1,1)\boxplus(\tau_2,1)\boxplus\cdots\boxplus(\tau_r,1)$.

\begin{conj}[Shahidi]\label{Shahidi}
For any generic global Arthur parameter $\phi=\boxplus_{i=1}^r(\tau_i,1) \in \wt{\Phi}_2(\Sp_{2n})$, there is an irreducible generic cuspidal automorphic representation $\pi$ of $\Sp_{2n}(\BA)$ belonging to $\wt{\Pi}_{\psi}(\Sp_{2n})$, and hence $\frak{p}^m(\pi)=\{[(2n)]\}$.
\end{conj}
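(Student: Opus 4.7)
The plan is to construct the desired generic cuspidal representation directly by automorphic descent, in the spirit of Ginzburg--Rallis--Soudry \cite{GRS11}, and then verify its membership in the Arthur packet $\wt{\Pi}_\phi(\Sp_{2n})$. Given $\phi = \boxplus_{i=1}^r(\tau_i,1) \in \wt{\Phi}_2(\Sp_{2n})$, each $\tau_i$ is self-dual of orthogonal type (so $L(s,\tau_i,\Sym^2)$ has a pole at $s=1$), the $\tau_i$ are pairwise distinct, and $\sum_{i=1}^r a_i = 2n+1$. Form the isobaric sum $\tau = \tau_1 \boxplus \cdots \boxplus \tau_r$, which is a self-dual automorphic representation of $\GL_{2n+1}(\BA)$ whose (partial) symmetric square $L$-function $L^S(s,\tau,\Sym^2)$ has a pole at $s=1$. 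This is exactly the input needed to run the descent construction.

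Next, I would build the Eisenstein series $E(\varphi_{\tau,s}, g)$ on $\SO_{4n+3}(\BA)$ attached to the Siegel-type parabolic with Levi $\GL_{2n+1}$, induced from $\tau \otimes |\det|^s$. The analytic theory of Jacquet--Langlands--Shahidi and its extensions, together with the pole of $L^S(s,\tau,\Sym^2)$ at $s=1$, guarantee that this Eisenstein series has a simple pole at $s=\tfrac12$, producing a square-integrable residual representation $\CE_\tau$ on $\SO_{4n+3}(\BA)$. The Ginzburg--Rallis--Soudry descent applies a Fourier--Jacobi-type integral against a Weil-theta function to $\CE_\tau$, producing an automorphic representation $\sigma = \CD(\tau)$ of $\Sp_{2n}(\BA)$. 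The main results of \cite{GRS11} show that $\sigma$ is nonzero, cuspidal, and $\psi$-generic (has a nonvanishing Whittaker--Fourier coefficient); take $\pi$ to be any irreducible summand of $\sigma$.

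It then remains to identify the global Arthur parameter of $\pi$. The strategy is to combine the computation of unramified parameters of the descent (which match those of $\tau$ by construction) with Arthur's endoscopic classification \cite{Ar13} and the strong multiplicity one / rigidity of parameters in $\wt{\Psi}_2(\Sp_{2n})$: the unique parameter whose functorial transfer to $\GL_{2n+1}$ equals $\tau$ is precisely $\phi$, so $\pi \in \wt{\Pi}_\phi(\Sp_{2n})$. This is the content of the ``generic part'' of Arthur's classification as reformulated in \cite[Theorem 3.4]{JL15a}, which I would invoke directly. Once $\pi$ is placed in $\wt{\Pi}_\phi(\Sp_{2n})$ and is known to be generic, the regular unipotent orbit $[(2n)]$ lies in the wave-front set $\Ff{n}(\pi)$; since $[(2n)]$ is the unique maximal symplectic partition of $2n$ under dominance order, we obtain $\Ff{p}^m(\pi) = \{[(2n)]\}$.

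The hard part is the identification step: showing that the descent $\pi$ really lies in $\wt{\Pi}_\phi(\Sp_{2n})$ rather than some other packet whose local parameters happen to match at almost all unramified places. For generic $\phi$, this is controlled by the fact that $\wt{\Pi}_\phi(\Sp_{2n}) \cap \CA_2(\Sp_{2n}) \subset \CA_\cusp(\Sp_{2n})$ together with the characterization of generic parameters by the existence of a Whittaker-generic member in their packet (Theorem 3.4 of \cite{JL15a}). The only other serious technical input is the nonvanishing and cuspidality of the descent, which for the generic (Siegel-type) case is the main theorem of \cite{GRS11}; the compatibility of the descent with the local Langlands parameters at unramified places then forces the Arthur parameter to be $\phi$.
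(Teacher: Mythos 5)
The paper itself does not prove the statement — it simply cites \cite[Theorem 3.3]{JL15a} and says the proof goes via Ginzburg–Rallis–Soudry descent — so your overall strategy is the same as the one the paper refers to: form $\tau = \tau_1 \boxplus \cdots \boxplus \tau_r$ on $\GL_{2n+1}(\BA)$ (orthogonal type, trivial product of central characters), produce a residual representation, descend to a generic cuspidal $\pi$ of $\Sp_{2n}(\BA)$, and identify its parameter as $\phi$ by matching unramified data with the rigidity of Arthur's classification and the characterization of generic packets in \cite[Theorem 3.4]{JL15a}. Your final step (generic $\Rightarrow$ $[(2n)]\in\frak{n}(\pi)$ $\Rightarrow$ $\frak{p}^m(\pi)=\{[(2n)]\}$, since $[(2n)]$ is the unique maximal symplectic partition) is also right.

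One concrete point in your sketch is off, however. You place the residual Eisenstein series on $\SO_{4n+3}(\BA)$ and then apply a Fourier–Jacobi descent against a Weil theta to land on $\Sp_{2n}(\BA)$. A Bessel/Gelfand–Graev descent from $\SO_{4n+3}$ produces representations of smaller \emph{orthogonal} groups (the stabilizer of the character for the partition $[(2k+1)1^{4n+2-2k}]$ is $\SO_{4n+2-2k}$), not symplectic groups; and the Fourier–Jacobi/Weil construction is intrinsically a symplectic–metaplectic mechanism. In the Ginzburg–Rallis–Soudry framework, for the symplectic target $\Sp_{2n}$ the correct ambient group is the metaplectic $\wt{\Sp}_{4n+2}(\BA)$: the Siegel-type Eisenstein series induced from $\gamma_\psi\otimes\tau\otimes|\det|^s$ on $\wt{\GL}_{2n+1}$ has its constant term governed by $L(s,\tau,\Sym^2)$ (the $\gamma_\psi$-twist effectively swaps $\wedge^2$ and $\Sym^2$), so the pole you need is there; the depth-$(n+1)$ Fourier–Jacobi coefficient, paired with a Weil theta function, then lands in $\Sp_{2n}(\BA)$. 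Also, for $r>1$ the residual representation is obtained as an iterated residue of a non-Siegel Eisenstein series built from the cuspidal data $\tau_1\otimes\cdots\otimes\tau_r$, not a simple pole at $s=\tfrac12$; this is a standard point in \cite{GRS11} but worth stating, since the whole construction hinges on the nonvanishing of that multi-residue. With the ambient group corrected to $\wt{\Sp}_{4n+2}(\BA)$, your argument matches the approach of the cited reference.
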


This conjecture has been proved in \cite[Theorem 3.3]{JL15a}，using the automorphic descent of Ginzburg, Rallis and Soudry.
Note that by analyzing constant terms of residual representations, M{\oe}glin (\cite[Proposition 1.2.1]{M08}) shows that if there is a residual representation occurring in $\wt{\Pi}_{\psi}(\Sp_{2n})$, then the Arthur parameter is never generic. Hence we have
$$
\wt{\Pi}_\phi(\Sp_{2n})\cap\CA_2(\Sp_{2n})\subset\CA_\cusp(\Sp_{2n})
$$
for all generic global Arthur parameters $\phi\in\wt{\Phi}_2(\Sp_{2n})$.
For general Arthur parameters $\psi=\boxplus_{i=1}^r(\tau_i,b_i)\in\wt{\Psi}_2(\Sp_{2n})$, the following conjecture made in \cite{J14}
extends Conjecture \ref{Shahidi} naturally.

\begin{conj}[\cite{J14}]\label{J14}
Let $\RG$ be a quasi-split classical group. For a given global Arthur parameter $\psi=\boxplus_{i=1}^r(\tau_i,b_i)\in\wt{\Psi}_2(\Sp_{2n})$,
the partition $\eta(\ul{p}_{\psi})$, which is the Barbasch-Vogan dual of the partition $\ul{p}_{\psi}=[b_1^{a_1}b_2^{a_2} \cdots b_r^{a_r}]$
associated to the parameter $\psi$, has the following properties:
\begin{enumerate}
\item $\eta(\ul{p}_{\psi})$ is bigger than or equal to any $\ul{p} \in \frak{p}^m(\pi)$ for all
$\pi \in \wt{\Pi}_{\psi}(\Sp_{2n})\cap\CA_2(\Sp_{2n})$,
under the dominance order of partitions as in Definition \ref{orders}; and
\item there exists a $\pi \in \wt{\Pi}_{\psi}(\Sp_{2n})\cap\CA_2(\Sp_{2n})$ with $\eta(\ul{p}_{\psi}) \in \frak{p}^m(\pi)$.
\end{enumerate}
\end{conj}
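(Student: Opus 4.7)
The plan is to attack the two parts of the conjecture separately, both within the framework of Fourier coefficients attached to nilpotent orbits developed above. For the upper bound in part (1), the natural approach is local-global: since any $\pi \in \wt{\Pi}_\psi(\Sp_{2n}) \cap \CA_2(\Sp_{2n})$ decomposes as a restricted tensor product $\pi = \otimes'_v \pi_v$ with each $\pi_v$ in the corresponding local Arthur packet $\wt{\Pi}_{\psi_v}$, the non-vanishing of any Fourier coefficient attached to $\ul{p}$ forces $\ul{p}$ to be compatible with the local wave-front set $\frak{n}(\pi_v)$ at every place. So the first task is to prove, at each local place $v$, that every member of $\wt{\Pi}_{\psi_v}$ has local wave-front set bounded (in dominance order) by $\eta(\ul{p}_\psi)$. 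This should follow from M{\oe}glin's parametrization of local Arthur packets combined with the Barbasch-Vogan duality, which is precisely the combinatorial shadow of how the $\SL_2$ factor in the Arthur parameter interacts with the nilpotent orbit supporting a local representation. Passing from a local bound at one place to the asserted global bound requires only that the notion of attached partition respects restriction to local components, which is straightforward from the definition of $\CF_{h,f}$.

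For the existence in part (2), one constructs an explicit $\pi$ realizing the bound and computes a nonzero Fourier coefficient attached to $\eta(\ul{p}_\psi)$. Several strategies are available depending on the shape of $\psi$: residual Eisenstein series for parameters of the form $(\tau, b) \boxplus \psi'$ as analyzed in \cite{JLZ13}; automorphic descent from the appropriate general linear group via \cite{GRS11}; its twisted version developed in \cite{JLXZ, JZ}; and theta correspondences when $\psi$ factors through a dual pair, as in the Ikeda or Saito-Kurokawa constructions. In each case, the strategy for non-vanishing is to start from the known non-vanishing of a Whittaker-Fourier coefficient of the inducing data $\tau_1, \ldots, \tau_r$ (guaranteed by the genericity of each $\tau_i$), then build up a coefficient attached to $\eta(\ul{p}_\psi)$ through a sequence of root-exchange and tower arguments. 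Theorem \ref{ggsglobal} is the key technical tool here: once any Whittaker pair in the $\RG$-orbit of a representative of the expected orbit gives a nonzero coefficient, the raising principle transfers non-vanishing to a Whittaker pair coming from a neutral $\frak{sl}_2$-triple, which is what the notion $\frak{p}^m(\pi)$ requires.

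The hardest step will be the local upper bound at non-archimedean places, since the full classification of local Arthur packets, together with the precise description of their wave-front sets, is not yet established in complete generality. At archimedean places over $\BC$ the result follows from Adams-Barbasch-Vogan, and at archimedean places over $\BR$ from the work of Barbasch and Vogan on associated varieties. For $p$-adic $v$ one has partial results, but extracting the sharp Barbasch-Vogan bound requires detailed analysis of M{\oe}glin's construction by Jacquet modules, and one must also verify that the $F$-rational orbit data, not merely the geometric orbit, is controlled. A secondary obstacle for part (2) is that for Arthur parameters of genuinely mixed type no single uniform construction is known to produce members of $\wt{\Pi}_\psi \cap \CA_2(\Sp_{2n})$; thus one expects to argue case by case, combining residual constructions with descent and theta lifts, and to match each case with a corresponding non-vanishing computation of Fourier coefficients attached to $\eta(\ul{p}_\psi)$.
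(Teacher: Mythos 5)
The statement you are attempting to prove is stated in the paper as a \emph{conjecture} (attributed to \cite{J14}), not a theorem; the paper does not prove it, and in several later theorems (Theorem \ref{unip}, Theorem \ref{ncmain4}) it is explicitly taken as a hypothesis. The closest the paper comes to part (1) is Theorem \ref{ub}, quoted from \cite{JL15b}, which establishes the upper bound $\ul{p}\leq\eta(\ul{p}_\psi)$ only under the \emph{lexicographical} order; the dominance-order bound asserted in Conjecture \ref{J14}(1) is strictly stronger and remains open. So there is no ``paper proof'' against which to measure your attempt.

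As a roadmap your proposal is reasonable and consistent with the known partial results, but it is not a proof, and you yourself name the essential obstructions. For part (1), the local-global reduction is indeed the natural strategy, but the paper does not argue this way: Theorem \ref{ub} is established in \cite{JL15b} by a direct global Fourier-coefficient analysis (building on \cite{GRS03} and \cite{JL15}), not by bounding local wave-front sets place by place. Your local route would in any case require knowing that every member of a $p$-adic local Arthur packet has wave-front set bounded by $\eta(\ul{p}_\psi)$ in dominance order, and that $F$-rational (not merely geometric) orbit data are controlled; as you note, this is not available in full generality, so the step is a genuine gap, not a routine reduction. There is also a subtlety you gloss over: even granting local bounds, patching them into a global dominance-order bound is not ``straightforward from the definition of $\CF_{h,f}$'' because the global wave-front set need not be the intersection of the local ones in dominance order, and the rationality of the supporting orbit over $F$ must be tracked. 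For part (2), the paper only verifies existence (with $\eta(\ul{p}_\psi)\in\frak{p}^m(\pi)$) for specific families, via the residual representations of \cite{JLZ13} and the Fourier-coefficient computations of \cite{L13} and \cite{JL15c}, in Sections 6.2.1--6.2.3; a uniform construction covering all $\psi\in\wt{\Psi}_2(\Sp_{2n})$ is, as you say, not known, so part (2) is likewise still a conjecture. In short: your plan correctly identifies the ingredients the literature uses, but both directions hinge on unresolved points, so this should be presented as a strategy, not a proof.
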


\begin{rmk}\label{rmkbv}
The Barbasch-Vogan duality can be explained as follows.
Given a partition $\ul{p}=[p_1p_2 \cdots p_r]$ of $2n+1$, with $p_1 \geq p_2 \geq \cdots \geq p_r$,
by \cite[Definition A1]{BV85} and \cite[Section 3.5]{Ac03}, the Barbasch-Vogan dual $\eta(\ul{p})$ is defined to be $((\ul{p}^{-})_{\Sp})^t$.
More precisely, one has that
$\ul{p}^{-}=[p_1p_2\cdots (p_r-1)]$ and $(\ul{p}^{-})_{\Sp}$ is the biggest symplectic partition that is smaller than or equal to $\ul{p}^{-}$.
We refer to \cite[Lemma 6.3.8]{CM93} for the recipe of obtaining $(\ul{p}^{-})_{\Sp}$ from $\ul{p}^{-}$.  $(\ul{p}^{-})_{\Sp}$ is called
the symplectic collapse of $\ul{p}^{-}$. Finally, $((\ul{p}^{-})_{\Sp})^t$ is the transpose of $(\ul{p}^{-})_{\Sp}$.
By \cite[Lemma 3.3]{Ac03}, one has that $\eta(\ul{p})=((\ul{p}^t)^-)_{\Sp}$.
\end{rmk}

We recall the following result from \cite{JL15b}, which is also a main ingredient of this paper.

\begin{thm}[Theorem 1.3 and Proposition 6.4, \cite{JL15b}]\label{ub}
For a given global Arthur parameter $\psi=\boxplus_{i=1}^r(\tau_i,b_i) \in \widetilde{\Psi}_2(\Sp_{2n})$,
the Barbasch-Vogan dual $\eta(\ul{p}_{\psi})$ is bigger than or equal to any $\ul{p} \in \frak{p}^m(\pi)$
for every $\pi \in \wt{\Pi}_{\psi}(\Sp_{2n})\cap \CA_2(\Sp_{2n})$, under the lexicographical order of partitions as in Definition \ref{orders}.
\end{thm}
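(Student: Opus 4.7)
The plan is to argue by contradiction on the ordering, combined with an inductive reduction on rank. Suppose $\underline{p} \in \mathfrak{p}^m(\pi)$ with $\underline{p} >_{\mathrm{lex}} \eta(\underline{p}_\psi)$; writing $\underline{p} = [p_1 p_2 \cdots]$ and $\eta(\underline{p}_\psi) = [\eta_1 \eta_2 \cdots]$, let $i_0$ be the first index where they differ, so that $p_j = \eta_j$ for $j < i_0$ and $p_{i_0} > \eta_{i_0}$. I would aim to derive a contradiction from the non-vanishing of $\CF^{\psi_{\udl{p}}}(\pi)$.

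The first step reduces to the case $i_0 = 1$. Under the assumption that $p_j = \eta_j$ for $j < i_0$, I would apply Theorem \ref{ggsglobal} together with iterated root-exchange arguments to decompose $\CF^{\psi_{\udl{p}}}$ as an inner integral attached to the ``tail'' partition $[p_{i_0} p_{i_0+1} \cdots]$ on a smaller symplectic group obtained by stripping off Siegel-type parabolics corresponding to the first $i_0-1$ Jordan blocks. A parallel computation of constant terms on the discrete-spectrum side --- using M\oe glin's results on the action of Jacquet modules on members of global Arthur packets --- shows that the relevant cuspidal data on the smaller group sits in an Arthur packet associated to a parameter $\psi'$ whose Barbasch--Vogan dual has leading part exactly $\eta_{i_0}$. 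Thus the problem reduces to the base case: showing that no $\udl{p} \in \Fp^m(\pi)$ can have leading part strictly exceeding $\eta_1$.

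For the base case $p_1 > \eta_1$, the argument is local. At an unramified finite place $v$, $\pi_v$ is the spherical constituent of a principal series whose Satake data are determined by $\psi$, namely $\oplus_{i=1}^{r} \chi_{\tau_{i,v}} \otimes \mathrm{Sym}^{b_i - 1}$, reflecting precisely the partition $\udl{p}_\psi$. The non-vanishing of a global Fourier coefficient attached to $\udl{p}$ forces a non-zero local twisted Jacquet module attached to the simpler partition $[p_1, 1^{2n-p_1}]$, since a partition with leading part $p_1$ dominates $[p_1, 1^{2n - p_1}]$ and one can expand further along root subgroups. A Geometric-Lemma / Casselman--Shalika style computation of such twisted Jacquet modules for unramified principal series then produces the bound $p_1 \leq \eta_1$: the identity $\eta(\udl{p}_\psi) = ((\udl{p}_\psi^t)^-)_{\Sp}$ from Remark \ref{rmkbv} is precisely the combinatorial mechanism that reads off the maximal leading part of a supported partition from the multiset of Satake exponents of the inducing character.

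The main obstacle is the base-case local computation: the unramified principal series attached to $\psi_v$ is generally highly reducible, and the vanishing of the twisted Jacquet module must be established on the spherical constituent $\pi_v$ specifically, rather than merely on the full induced representation (where lower composition factors could a priori carry larger degenerate Whittaker models). Handling this requires either a precise description of the unramified composition series together with control of the twisted Jacquet functor on each factor, or an indirect route that uses the pole structure of Eisenstein series and intertwining operators to restrict which constituents can contribute to $\pi$ via Arthur's endoscopic classification. This is also the point where the switch from the (conjectural) dominance-order bound of Conjecture \ref{J14} to the proved lexicographic bound is essential, since lex order allows a single-coordinate obstruction at the leading part to suffice.
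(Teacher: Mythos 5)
The present paper quotes Theorem \ref{ub} from [JL15b] without proof, so there is no in-paper argument to compare against; I evaluate your outline on its own terms. It identifies the right circle of ideas (near-equivalence of $\pi$ to the data of $\psi$ at unramified places combined with global Fourier-coefficient manipulations), but two essential steps fail.

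First, your reduction from a discrepancy at index $i_0$ to index $1$ proceeds via ``a parallel computation of constant terms on the discrete-spectrum side'' after stripping off Siegel-type parabolics. This collapses precisely in the case the theorem most needs to cover: for cuspidal $\pi\in\wt{\Pi}_\psi(\Sp_{2n})\cap\CA_\cusp(\Sp_{2n})$, every constant term along a proper parabolic vanishes identically, so no ``cuspidal data on a smaller group'' is produced, and M{\oe}glin's Jacquet-module descriptions of packet members are not available for an arbitrary cuspidal element. Handling the non-leading parts of a lexicographic bound must go through Fourier coefficients of $\pi$ itself --- iterated Fourier--Jacobi/descent coefficients and the exchange lemmas of \cite{GRS03} and \cite{JL15} --- not through constant terms or parabolic induction data.

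Second, the base case. You rightly flag that the vanishing of the twisted Jacquet module must be proved for the spherical constituent $\pi_v$, not for the full unramified principal series, which can a priori carry larger degenerate Whittaker models on other composition factors. But this is left open: the remark that $\eta(\udl{p}_\psi)=((\udl{p}_\psi^t)^-)_{\Sp}$ ``is precisely the combinatorial mechanism that reads off the maximal leading part'' states the desired answer rather than deriving it. That local constraint on the spherical constituent is exactly the substantive content of the cited Proposition 6.4 in [JL15b], and it is what your argument would need to supply. As written, neither the reduction step nor the base-case local step is established, so the proof is incomplete in both of its load-bearing parts.
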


It is clear that when the global Arthur parameter $\psi=\phi$ is generic, the partition $\underline{p}_\phi=[1^{2n+1}]$, and
hence the partition $\eta(\underline{p}_\phi)=[(2n)]$,
which corresponds to the regular nilpotent orbit in $\frak{sp}_{2n}$.
Since any symplectic partition is smaller than or equal to $[(2n)]$, it follows that Conjecture \ref{J14} holds for all generic Arthur
parameters $\phi\in\wt{\Phi}_2(\Sp_{2n})$. Hence, it is more delicate to understand the lower bound for partitions
$\ul{p}\in\Fp^m(\pi)$ for all $\pi\in\CA_\cusp(\Sp_{2n})$. It is even harder to understand the lower bound for partitions
$\ul{p}\in\Fp^m(\pi)$ when $\pi\in\wt{\Pi}_{\psi}(\Sp_{2n})\cap\CA_\cusp(\Sp_{2n})$ for a given global Arthur parameter $\psi\in\wt{\Psi}_2(\Sp_{2n})$.

\begin{prob}\label{splb}
Find symplectic partitions $\ul{p}_0$ of $2n$ with the property that
\begin{enumerate}
\item there exists a $\pi\in\CA_\cusp(\Sp_{2n})$ such that $\ul{p}_0\in\Fp^m(\pi)$, but
\item for any $\pi\in\CA_\cusp(\Sp_{2n})$,  there does not exist a partition  $\ul{p}\in\Fp^m(\pi)$ such that $\ul{p}<\ul{p}_0$, under the dominance order of partitions as in Definition \ref{orders}.
\end{enumerate}
\end{prob}

This problem was motivated by the theory of singular automorphic representations of $\Sp_{2n}(\BA)$, which is briefly recalled
in the following section.

\subsection{Singular automorphic representations}\label{sar}
In this section, consider $\RG_n=\Sp_{2n}, \SO_{2n+1}$ or $\SO_{2n}$ to be split classical groups.
The theory of singular automorphic representations of $\RG_n(\BA)$ has been developed based on the notion of
ranks for unitary representations of Howe (\cite{H81}) and by the fundamental work of Li (\cite{Li92}).

When $\RG_n=\Sp_{2n}$ is the symplectic group, defined by the skew-symmetric matrix $J_n=\begin{pmatrix}0&w\\-w&0\end{pmatrix}$,
with $w=(w_{ij})_{n\times n}$ anti-diagonal.
Take $P_n=M_nU_n$ to be the Siegel parabolic subgroup of $\Sp_{2n}$. Hence $M_n\cong\GL_n$ and the elements of $U_n$ are of form
$$
u(X)=\begin{pmatrix}I_n&X\\ 0&I_n\end{pmatrix}.
$$
The Pontryagin duality tells us that the group of unitary characters
$U_n(\BA)$ which are trivial on $U_n(F)$ is isomorphic to $\Sym^2(F^n)$, i.e.
$$
\wh{U_n(F)\bks U_n(\BA)}\cong\Sym^2(F^n).
$$
The explicit isomorphism is given as follows. Take $\psi_F$ to be a nontrivial additive character of $F\bks\BA$. For any $T\in\Sym^2(F^n)$,
i.e. any $n\times n$ symmetric matrix $T$, the corresponding character $\psi_T$ is given by
$$
\psi_T(u(X)):=\psi_F(\tr(TwX)).
$$
The adjoint action of the Levi subgroup $\GL_n$ on $U_n$ induces an action of $\GL_n$ on $\Sym^2(F^n)$. For an automorphic form
$\varphi$ on $\Sp_{2n}(\BA)$, the $\psi_T$-Fourier coefficient is defined by
\begin{equation}\label{Tfc}
\CF^{\psi_T}(\varphi)(g)
:=
\int_{U_m(F)\bks U_n(\BA)}\varphi(u(X)g)\psi_T^{-1}(u(X))du(X).
\end{equation}
An automorphic form $\varphi$ on $\Sp_{2n}(\BA)$ is called {\sl non-singular} if $\varphi$ has a nonzero $\psi_T$-Fourier coefficient with
the $F$-rank of $T$ maximal, which is $n$. In other words, an automorphic form $\varphi$ on $\Sp_{2n}(\BA)$ is called
{\sl singular} if $\varphi$ has the property that if a $\psi_T$-Fourier coefficient $\CF^{\psi_T}(\varphi)$ is nonzero, then
$\det(T)=0$.

Based on his notion of ranks for unitary representations, Howe shows in \cite{H81} that if an automorphic form $\varphi$ on
$\Sp_{2n}(\BA)$ is singular, then $\varphi$ can be expressed as a linear combination of certain theta functions.
Li in \cite{Li89} shows that a cuspidal automorphic form of $\Sp_{2n}(\BA)$ with $n$ even is distinguished, i.e. $\varphi$ has nonzero $\psi_T$-Fourier coefficient with only one $\GL_n$-orbit of
non-degenerate $T$ if and only if $\varphi$ is in the space of theta lifting from the orthogonal group $\RO_T$ defined by $T$.
A family of explicit examples of such distinguished cuspidal automorphic representations of $\Sp_{2n}(\BA)$ with $n$ even was
constructed by Piatetski-Shapiro and Rallis in \cite{PSR88}.
Furthermore, Li proves in \cite{Li92} the following theorem.

\begin{thm}[\cite{Li92}]\label{li}
For any classical group $\RG_n$, cuspidal automorphic forms on $\RG_n(\BA)$ are non-singular.
\end{thm}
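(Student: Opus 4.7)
The plan is to prove the contrapositive: a singular cuspidal automorphic form on $\Sp_{2n}(\BA)$ must vanish. I focus on $\RG_n = \Sp_{2n}$; the cases $\SO_{2n+1}, \SO_{2n}$ run in parallel after replacing the Siegel radical $U_n$ with its analog and the parameterizing symmetric matrices with the appropriate (alternating, or suitably constrained) matrices. First I would Fourier expand $\varphi \in \CA_\cusp(\Sp_{2n})$ along $U_n$, obtaining
$$
\varphi(g) = \sum_{T \in \Sym^2(F^n)} \CF^{\psi_T}(\varphi)(g),
$$
and note that singularity forces the sum to be supported on $T$ with $\det(T) = 0$. The adjoint action $T \mapsto {}^t A T A$ of the Levi $M_n \cong \GL_n$ on $\Sym^2(F^n)$ partitions the degenerate $T$ into finitely many $\GL_n(F)$-orbits, indexed by rank $r < n$ together with an equivalence class of non-degenerate rank-$r$ symmetric form over $F$. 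So it suffices to show $\CF^{\psi_{T_0}}(\varphi) \equiv 0$ for each representative $T_0 = \diag(T', 0_{n-r})$ with $r < n$.

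Next, I would relate such a $\CF^{\psi_{T_0}}(\varphi)$ to a constant term of $\varphi$ along a proper parabolic. The crucial observation is that the character $\psi_{T_0}$ of $U_n$ is trivial on the ``lower-right block'' subgroup $U_n^{(r)} \subset U_n$ (those $u(X)$ with $X$ supported in the $(n-r)\times(n-r)$ lower-right block) and is, on the complement, the pullback of a non-degenerate Siegel character of the smaller group $\Sp_{2r}$ sitting inside the Levi $\GL_{n-r} \times \Sp_{2r}$ of a proper parabolic $Q \subsetneq \Sp_{2n}$. Integrating $\CF^{\psi_{T_0}}(\varphi)$ further over the root subgroups of the unipotent radical of $Q$ on which $\psi_{T_0}$ is trivial converts the expression into a smaller Siegel-type Fourier coefficient of the constant term $\varphi_Q$. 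Since $Q$ is proper and $\varphi$ is cuspidal, $\varphi_Q \equiv 0$, so $\CF^{\psi_{T_0}}(\varphi) \equiv 0$. Combining with singularity, all Fourier coefficients of $\varphi$ along $U_n$ vanish; Fourier inversion together with the cuspidality of $\varphi$ along $P_n$ itself then forces $\varphi \equiv 0$.

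The main obstacle is the unipotent-algebra identity in the second step: identifying the parabolic $Q$ precisely, verifying that the remaining ``active'' part of the integration after enlarging by the extra root subgroups matches a Siegel-type Fourier coefficient on the smaller $\Sp_{2r}$, and controlling the action of the $\GL_{n-r}$-factor fixing $\psi_{T_0}$ so that it does not obstruct the collapse. The root-system bookkeeping is delicate, especially for $r$ close to $n-1$, where the parabolic $Q$ is large and the extra root directions are few. A cleaner alternative that bypasses these computations is to invoke Howe's structure theorem (\cite{H81}), which asserts that a singular automorphic form on $\Sp_{2n}(\BA)$ is a finite linear combination of theta functions from dual reductive pairs $(\Sp_{2n}, \RO_V)$ with $\dim V \leq n$, and then use the Rallis tower / Kudla--Rallis theory to rule out cuspidality of theta lifts coming from such ``small'' orthogonal groups; this is essentially the route taken in \cite{Li92}.
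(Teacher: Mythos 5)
The paper does not prove this statement; it quotes it from \cite{Li92}, so there is no internal proof to compare against, and your reconstruction has to be judged on its own terms.

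Your direct approach has a genuine gap exactly where you flag the ``main obstacle,'' and the gap is not merely bookkeeping. You propose that integrating $\CF^{\psi_{T_0}}(\varphi)$ further over additional root subgroups in the unipotent radical of $Q$ produces a Siegel-type Fourier coefficient of the constant term $\varphi_Q$, hence vanishes by cuspidality, hence $\CF^{\psi_{T_0}}(\varphi)\equiv 0$. The last inference does not follow: integrating a nonzero function over more variables can certainly kill it, so the vanishing of that enlarged integral says nothing about $\CF^{\psi_{T_0}}(\varphi)$ itself. What you would actually need to do is Fourier-expand $\CF^{\psi_{T_0}}(\varphi)$ along those additional root subgroups, observe that the \emph{constant} term of that expansion is the quantity you computed (and vanishes by cuspidality), and then show separately that every \emph{non-constant} term also vanishes. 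Those non-constant terms, after conjugation, are Fourier coefficients of $\varphi$ against other characters or other $T$'s, and closing such an argument requires an induction on rank together with root-exchange lemmas of the kind found in \cite{GRS03} and \cite{GRS05}. That is precisely the work you describe as delicate and do not supply, and without it the direct argument proves nothing.

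Your alternative route via Howe's theory of rank is the right general direction and is closer to what \cite{Li92} actually does, but the mechanism is a bit different from ``Rallis tower non-cuspidality of theta lifts.'' Howe's structure theorem exhibits a singular form as a finite combination of theta \emph{kernels} (Schwartz-indexed theta series for dual pairs $(\Sp_{2n},\RO_V)$ with $\dim V$ small), not theta lifts of cusp forms; the non-cuspidality one needs is that of these kernels, viewed as automorphic functions on $\Sp_{2n}(\BA)$, which are concentrated on degenerate orbits and have nonvanishing constant terms. Theta lifts of cusp forms, unlike kernels, can perfectly well be cuspidal at first occurrence, so the Rallis tower by itself is not the right tool for the contradiction.
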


For orthogonal groups $\RG_n$, the singularity of automorphic forms can be defined as follows, following \cite{Li92}. Let $(V,q)$ be a non-degenerate
quadratic space defined over $F$ of dimension $m$ with the Witt index $n=[\frac{m}{2}]$. Let $X^+$ be a maximal totally isotropic subspace of $V$,
which has dimension $n$, and let $X^-$ be the maximal totally isotropic subspace of $V$ dual to $X^+$ with respect to $q$. Hence we have
the polar decomposition
$$
V=X^-+V_0+X^+
$$
with $V_0$ being the orthogonal complement of $X^-+X^+$, which has dimension less than or equal to one. The generalized flag
$$
\{0\}\subset X^+\subset V
$$
which defines a maximal parabolic subgroup $P_{X^+}$, whose Levi part $M_{X^+}$ is isomorphic to $\GL_{n}$ and whose unipotent radical
$N_{X^+}$ is abelian if $m$ is even; and is a two-step unipotent subgroup with its center $Z_{X^+}$ if $m$ is odd. When $m$ is even, we set $Z_{X^+}=N_{X^+}$.
Again, by the Pontryagin duality, we have
$$
\wh{Z_{X^+}(F)\bks Z_{X^+}(\BA)}\cong\wedge^2(F^{n}),
$$
which is given explicitly, as in the case $\Sp_{2n}$, by the following formula: For any $T\in\wedge^2(F^{[\frac{m}{2}]})$,
$$
\psi_T(z(X)):=\psi_F(\tr(TwX)).
$$
The adjoint action of the Levi subgroup $\GL_{n}$ on $Z_{X^+}$ induces an action of $\GL_{n}$ on
the space $\wedge^2(F^{n})$.
For an automorphic form $\varphi$ on $\RG(\BA)$, the $\psi_T$-Fourier coefficient is defined by
\begin{equation}\label{Tfc2}
\CF^{\psi_T}(\varphi)(g)
:=
\int_{Z_{X^+}(F)\bks Z_{X^+}(\BA)}\varphi(z(X)g)\psi_T^{-1}(z(X))dz(X).
\end{equation}
An automorphic form $\varphi$ on $\RG(\BA)$ is called {\sl non-singular} if $\varphi$ has a non-zero $\psi_T$-Fourier coefficient with
$T\in\wedge^2(F^{n})$ of maximal rank.

Following Section 2.1, we may reformulate the maximal rank Fourier coefficients of automorphic forms in terms of partitions, and
denote by $\udl{p}_\ns$ the partition corresponding to the non-singular Fourier coefficients. It is easy to figure out the following:
\begin{enumerate}\label{nspt}
\item When $\RG_n=\Sp_{2n}$, one has $\udl{p}_\ns=[2^n]$. This is a special partition for $\Sp_{2n}$.
\item When $\RG_n=\SO_{2n+1}$, one has
$$
\udl{p}_\ns
=
\begin{cases}
[2^{2e}1]&\ \text{if}\ n=2e;\\
[2^{2e}1^3]&\ \text{if}\ n=2e+1.
\end{cases}
$$
This is not a special partition of $\SO_{2n+1}$.
\item When $\RG_n=\SO_{2n}$, one has
$$
\udl{p}_\ns
=
\begin{cases}
[2^{2e}]&\ \text{if}\ n=2e;\\
[2^{2e}1^2]&\ \text{if}\ n=2e+1.
\end{cases}
$$
This is a special partition of $\SO_{2n}$.
\end{enumerate}
According to \cite{JLS15},  for any automorphic representation $\pi$, the set $\frak{p}^m(\pi)$ contains only special partitions.
Since the non-singular partition $\udl{p}_\ns$ is not special when $\RG_n=\SO_{2n+1}$,
the partitions contained in $\frak{p}^m(\pi)$ as $\pi$ runs in the cuspidal spectrum of $\RG_n$ should be bigger than or
equal to the following partition
$$
\udl{p}_\ns^{\RG_n}=
\begin{cases}
[32^{2e-2}1^2]&\ \text{if}\ n=2e;\\
[32^{2e-2}1^4]&\ \text{if}\ n=2e+1.
\end{cases}
$$
Following \cite{CM93}, $\udl{p}_\ns^{\RG_n}$ denotes the $\RG_n$-expansion of the partition $\udl{p}_\ns$, i.e., the smallest special partition which is bigger than or equal to $\udl{p}_\ns$.
Of course, when $\RG_n=\Sp_{2n}$ or $\SO_{2n}$, one has that $\udl{p}_\ns^{\RG_n}=\udl{p}_\ns$.

\begin{prop}\label{lbpt}
For split classical group $\RG_n$, the $\RG_n$-expansion of the non-singular partition, $\udl{p}_\ns^{\RG_n}$,
is a lower bound for partitions in the set $\Fp^m(\pi)$ as $\pi$ runs in the cuspidal spectrum of $\RG_n$.
\end{prop}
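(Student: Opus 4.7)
The plan is to combine Li's non-singularity theorem (Theorem \ref{li}), the \cite{JLS15} result that every partition in $\Fp^m(\pi)$ is special, and the downward closure of the wave-front set encoded in Theorem \ref{ggsglobal}. In the spirit of Problem \ref{splb}, I read the claim that $\udl{p}_\ns^{\RG_n}$ is a lower bound for $\Fp^m(\pi)$ as the assertion that no $\udl{p}\in\Fp^m(\pi)$ satisfies $\udl{p}<\udl{p}_\ns^{\RG_n}$ in the dominance order.

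First I would translate Theorem \ref{li} into the partition framework of Section \ref{secfcgapk}. Case by case for $\RG_n=\Sp_{2n},\SO_{2n+1},\SO_{2n}$, a direct Jacobson--Morozov computation identifies the $\psi_T$-Fourier coefficient with $T$ of maximal rank on the Siegel-type unipotent $U_n$ (for $\Sp_{2n}$), respectively on the center $Z_{X^+}$ of the unipotent radical of $P_{X^+}$ (for the orthogonal cases), with a generalized Whittaker--Fourier coefficient attached to a nilpotent orbit whose partition is exactly the non-singular partition $\udl{p}_\ns$ listed above. Hence Theorem \ref{li} yields $\udl{p}_\ns\in\Fn(\pi)$ for every $\pi\in\CA_\cusp(\RG_n)$.

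Next, since $\Fn(\pi)$ is a finite poset under dominance, some $\udl{q}\in\Fp^m(\pi)$ dominates $\udl{p}_\ns$. By \cite{JLS15}, $\udl{q}$ is special, so by the defining minimality of $\udl{p}_\ns^{\RG_n}$ among special partitions dominating $\udl{p}_\ns$, one obtains $\udl{p}_\ns^{\RG_n}\leq\udl{q}$. I would then apply Theorem \ref{ggsglobal} to propagate the non-vanishing from $\udl{q}$ down to $\udl{p}_\ns^{\RG_n}$, using that for $\mathfrak{sp}_{2n},\mathfrak{so}_{2n+1},\mathfrak{so}_{2n}$ the closure order on nilpotent orbits coincides with the dominance order on the associated partitions. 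This places $\udl{p}_\ns^{\RG_n}$ itself in $\Fn(\pi)$. The maximality of any $\udl{p}\in\Fp^m(\pi)$ in $\Fn(\pi)$ then rules out $\udl{p}_\ns^{\RG_n}$ strictly dominating $\udl{p}$, i.e.\ $\udl{p}\not<\udl{p}_\ns^{\RG_n}$, as required.

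The hard part will be the downward-propagation step. Theorem \ref{ggsglobal} uses the closure $\overline{\RG_{h'}(F)\cdot f'}$ taken inside the centralizer of $h'$, not the full $\RG(F)$-orbit closure; descending from $\udl{q}$ to $\udl{p}_\ns^{\RG_n}$ therefore requires, in each of types C, B, D, compatible choices of Whittaker pairs for the intermediate orbits so that the centralizer-closure hypothesis is met. This is standard but slightly technical, relying on a uniform choice of representatives inside a common Jacobson--Morozov parabolic; once it is in place, the remaining argument is purely order-theoretic. Note that the non-specialness of $\udl{p}_\ns$ for $\RG_n=\SO_{2n+1}$, which forces $\udl{p}_\ns^{\RG_n}$ to lie strictly above $\udl{p}_\ns$, is recorded entirely in the definition of $\udl{p}_\ns^{\RG_n}$ and does not enter the argument separately.
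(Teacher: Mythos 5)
Your reading of the statement (in line with Problem~\ref{splb}: no $\udl{p}\in\Fp^m(\pi)$ with $\udl{p}<\udl{p}_\ns^{\RG_n}$) is reasonable, and the opening move---translating Theorem~\ref{li} into the statement $\udl{p}_\ns\in\Fn(\pi)$---is exactly what the paper does. But the middle of your argument takes a detour that creates a genuine gap where the paper has none.

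You go \emph{up} from $\udl{p}_\ns$ to a maximal element $\udl{q}\in\Fp^m(\pi)$, use specialness of $\udl{q}$ to conclude $\udl{q}\geq\udl{p}_\ns^{\RG_n}$, and then try to come back \emph{down} from $\udl{q}$ to $\udl{p}_\ns^{\RG_n}$ via Theorem~\ref{ggsglobal}. That downward step is precisely where the argument breaks. Theorem~\ref{ggsglobal} (Theorem~C of \cite{GGS15}) only propagates non-vanishing along the closure of the orbit of $f'$ under the centralizer $\RG_{h'}$, not along the full $\RG(F)$- or $\RG(\overline{F})$-orbit closure; the global wave-front set is not known to be closed under the dominance/closure order in general. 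You flag this as ``standard but slightly technical,'' but it is not a formality: between an arbitrary special $\udl{q}$ dominating $\udl{p}_\ns^{\RG_n}$ and $\udl{p}_\ns^{\RG_n}$ itself there is no canonical choice of Whittaker pairs making the centralizer-closure hypothesis hold, and the claim would amount to closedness of $\Fn(\pi)$, which is not what Theorem~\ref{ggsglobal} says.

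The intended route avoids this entirely because it never goes down. The citation to \cite{JLS15} is not merely the \emph{consequence} that $\Fp^m(\pi)$ consists of special partitions; the content of \cite{JLS15} is a \emph{raising} statement: if $\pi$ has a nonzero Fourier coefficient attached to a non-special orbit, then it has one attached to its $\RG_n$-expansion. Applied to $\udl{p}_\ns$, this gives $\udl{p}_\ns^{\RG_n}\in\Fn(\pi)$ directly, without any detour through a maximal element and without any downward propagation. (For $\RG_n=\Sp_{2n}$ or $\SO_{2n}$, $\udl{p}_\ns^{\RG_n}=\udl{p}_\ns$ and there is nothing to do; the raising is only needed for $\SO_{2n+1}$.) Once $\udl{p}_\ns^{\RG_n}\in\Fn(\pi)$, your final order-theoretic observation is correct: no maximal element of $\Fn(\pi)$ can lie strictly below it. So replace the up-then-down loop by a single application of the raising lemma, and the proof closes.
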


It is natural to ask whether the lower bound $\udl{p}_\ns^{\RG_n}$ is sharp. This is to construct or find an irreducible cuspidal
automorphic representation $\pi$ of $\RG_n(\BA)$ with the property that $\udl{p}_\ns^{\RG_n}\in\Fp^m(\pi)$.

When $\RG_n=\Sp_{4e}$ with $n=2e$ even, and when $F$ is totally real, the examples constructed by T. Ikeda (\cite{Ik01} and \cite{Ik})
are irreducible cuspidal
automorphic representations $\pi$ of $\Sp_{4e}(\BA)$ with the global Arthur parameter $\psi=(\tau,2e)\boxplus(1,1)$, where $\tau\in\CA_\cusp(\GL_2)$
is of symplectic type. By Theorem \ref{ub}, for any partition $\ul{p}\in\Fp^m(\pi)$, we should have
$$
\ul{p}\leq \eta(\udl{p}_\psi)=[2^{2e}]=\udl{p}_\ns^{\Sp_{4e}},
$$
under the lexicographical order of partitions, which automatically implies that $\ul{p}\leq[2^{2e}]=\udl{p}_\ns^{\Sp_{4e}}$ under the dominance order of partitions.
On the other hand, by Theorem \ref{li}, for any partition $\ul{p}\in\Fp^m(\pi)$, we must have
$$
\udl{p}_\ns^{\Sp_{4e}}=[2^{2e}]\leq\ul{p},
$$
under the dominance order of partitions.
It follows that $\Fp^m(\pi)=\{[2^{2e}]=\udl{p}_\ns^{\Sp_{4e}}\}$.

\begin{prop}\label{ik}
When $F$ is totally real, the non-singular partition $\udl{p}_\ns^{\Sp_{4e}}=\udl{p}_\ns=[2^{2e}]$ is the sharp lower bound in the sense that
for all $\pi\in\CA_\cusp(\Sp_{4e})$, the partition $\udl{p}_\ns^{\Sp_{4e}}\in\Fp(\pi)$ and
there exists a $\pi\in\CA_\cusp(\Sp_{4e})$, as constructed in \cite{Ik01} and \cite{Ik}, such that $\udl{p}_\ns^{\Sp_{4e}}\in\Fp^m(\pi)$.
\end{prop}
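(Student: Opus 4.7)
The plan is to assemble the two bounds established in the previous discussion—Li's lower bound (Proposition \ref{lbpt}, via Theorem \ref{li}) and the Barbasch-Vogan upper bound (Theorem \ref{ub})—and to verify that they pinch the Fourier coefficient partition of Ikeda's cusp forms to exactly $[2^{2e}]$.

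First, I would dispose of the claim that $\udl{p}_\ns^{\Sp_{4e}} \in \Fp(\pi)$ for every $\pi \in \CA_\cusp(\Sp_{4e})$. Here $\udl{p}_\ns^{\Sp_{4e}} = [2^{2e}]$ is already a special symplectic partition of $4e$, so the $\RG_n$-expansion is trivial. By Li's theorem (Theorem \ref{li}), any cuspidal automorphic form on $\Sp_{4e}(\BA)$ is non-singular, i.e.\ admits a non-zero $\psi_T$-Fourier coefficient in the sense of \eqref{Tfc} with $T \in \Sym^2(F^{2e})$ of maximal rank $2e$. Reformulating as in the discussion preceding Proposition \ref{lbpt}, this exactly says that the partition $[2^{2e}] = \udl{p}_\ns^{\Sp_{4e}}$ lies in the wave-front set $\Fp(\pi)$.

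Next, for the sharpness, I would take $\pi \in \CA_\cusp(\Sp_{4e})$ constructed by Ikeda in \cite{Ik01}, \cite{Ik} (with $F$ totally real and $\tau \in \CA_\cusp(\GL_2)$ of symplectic type), which belongs to the global Arthur packet $\wt{\Pi}_\psi(\Sp_{4e})$ for $\psi = (\tau, 2e) \boxplus (1,1)$. The associated partition is
\[
\udl{p}_\psi = [(2e)^2\, 1],
\]
a partition of $4e+1$. Following the Barbasch-Vogan recipe recalled in Remark \ref{rmkbv}: subtracting $1$ from the last part gives $\udl{p}_\psi^{-} = [(2e)^2]$; since $2e$ is even, this is already symplectic, so $(\udl{p}_\psi^{-})_{\Sp} = [(2e)^2]$; transposing yields
\[
\eta(\udl{p}_\psi) = [2^{2e}] = \udl{p}_\ns^{\Sp_{4e}}.
\]
Theorem \ref{ub} then gives the upper bound $\ul{p} \leq [2^{2e}]$ under the lexicographical order for every $\ul{p} \in \Fp^m(\pi)$.

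Finally, I would pinch the two bounds. On one hand, $\udl{p}_\ns^{\Sp_{4e}} = [2^{2e}] \leq \ul{p}$ under the dominance order (Proposition \ref{lbpt}). A direct check on the definitions shows that dominance $\geq$ implies lexicographical $\geq$: at the first index $i$ where the two partitions differ, equality of the partial sums through $i-1$ combined with $\sum_{j=1}^{i} p_j \geq \sum_{j=1}^{i} q_j$ forces $p_i > q_i$, which is the lex condition. Applied here, this promotes the dominance inequality $\ul{p} \geq [2^{2e}]$ to the lex inequality $\ul{p} \geq [2^{2e}]$, and together with the upper bound $\ul{p} \leq [2^{2e}]$ under lex we conclude $\ul{p} = [2^{2e}]$. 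Hence $\Fp^m(\pi) = \{[2^{2e}]\} = \{\udl{p}_\ns^{\Sp_{4e}}\}$, which proves sharpness. The only genuinely non-formal ingredients are the theorems of Li, Ikeda, and \cite{JL15b} cited above; there is no serious obstacle once these are in hand.
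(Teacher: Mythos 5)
Your proof is correct and follows essentially the same pinch argument as the paper: the lower bound $[2^{2e}]\leq\ul{p}$ (dominance) comes from Li's non-singularity theorem, and the upper bound $\ul{p}\leq[2^{2e}]$ (lexicographic) comes from Theorem~\ref{ub} applied to Ikeda's parameter $\psi=(\tau,2e)\boxplus(1,1)$ with $\eta(\udl{p}_\psi)=[2^{2e}]$. The only cosmetic difference is in reconciling the two orders---you upgrade the dominance lower bound to a lex lower bound via the general implication that dominance $\geq$ forces lex $\geq$, while the paper instead converts the lex upper bound to a dominance upper bound (valid here because $[2^{2e}]$ is a rectangle, so any symplectic partition of $4e$ with largest part at most $2$ is dominated by it); both routes are valid.
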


It is clear that the assumption that $F$ must be totally real is substantial in the construction of Ikeda in \cite{Ik01} and \cite{Ik}.
However, there is no known approach to proceed the similar construction when $F$ is not totally real. We are going to discuss the situation
in the following sections when $F$ is totally imaginary, which leads to a totally different conclusion.

Also the situation is different when we consider orthogonal groups.
For $\RG_n$ to $\SO_{2n+1}$ or $\SO_{2n}$, in spirit of a conjecture of Ginzburg (\cite{G06}), any partition $\ul{p}$ in
$\Fp(\pi)$ with $\pi\in\CA_\cusp(\RG_n)$  should contain only odd parts. Hence it is reasonable to conjecture the existence of a lower bound
which is better than the one determined by non-singularity of cuspidal automorphic representations.

\begin{conj}\label{lbso}
For $\RG_n$ to be $F$-split $\SO_{2n+1}$ or $\SO_{2n}$,
the sharp lower bound partition $\udl{p}_0^{\RG_n}$ for $\ul{p} \in \frak{p}(\pi)$, as $\pi$ runs in $\CA_\cusp(\RG_n)$, is given as follows:
\begin{enumerate}
\item When $\RG_n=\SO_{2n+1}$,
$$
\udl{p}_0^{\SO_{2n+1}}
=
\begin{cases}
[3^e1^{e+1}]&\ \text{if}\ n=2e;\\
[3^{e+1}1^e]&\ \text{if}\ n=2e+1.
\end{cases}
$$
\item When $\RG_n=\SO_{2n}$,
$$
\udl{p}_0^{\SO_{2n}}
=
\begin{cases}
[3^e1^e]&\ \text{if}\ n=2e;\\
[53^{e-1}1^e]&\ \text{if}\ n=2e+1.
\end{cases}
$$
\end{enumerate}
\end{conj}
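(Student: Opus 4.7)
The conjecture has two halves: establishing $\udl{p}_0^{\RG_n}$ as a lower bound for $\Fp^m(\pi)$ over $\pi\in\CA_\cusp(\RG_n)$, and exhibiting a cuspidal $\pi$ that attains it.

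For the lower bound, the plan is to combine three ingredients. First, Theorem \ref{li} gives that every $\pi\in\CA_\cusp(\RG_n)$ is non-singular, so any $\ul{p}\in\Fp^m(\pi)$ dominates $\udl{p}_\ns^{\RG_n}$. Second, by \cite{JLS15} every $\ul{p}\in\Fp^m(\pi)$ is a special partition for $\RG_n$. Third, invoke Ginzburg's conjecture \cite{G06}, stated in the preceding paragraph, that such $\ul{p}$ consists only of odd parts. With these three restrictions in force, one is reduced to a purely combinatorial verification: among special partitions of $2n+1$ (resp.\ $2n$) with only odd parts that dominate $\udl{p}_\ns^{\RG_n}$, the smallest is exactly $\udl{p}_0^{\RG_n}$. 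This is a finite calculation of partial sums. For instance, for $\RG_n=\SO_{4e+1}$, one checks that $[3^e 1^{e+1}]$ dominates $[3\,2^{2e-2}1^2]=\udl{p}_\ns^{\SO_{4e+1}}$, is special, has only odd parts, and that any strictly smaller special odd-part partition of $4e+1$ fails to dominate $\udl{p}_\ns^{\SO_{4e+1}}$; the other three cases are parallel.

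For sharpness, the strategy is to locate a global Arthur parameter $\psi\in\wt{\Psi}_2(\RG_n)$ whose Barbasch-Vogan dual $\eta(\udl{p}_\psi)$ equals $\udl{p}_0^{\RG_n}$, and then to produce a cuspidal member of $\wt{\Pi}_\psi(\RG_n)$ realizing this partition. Applied to such $\pi$, Theorem \ref{ub} would force $\ul{p}\leq\udl{p}_0^{\RG_n}$ for every $\ul{p}\in\Fp^m(\pi)$, while the lower-bound argument above would give the reverse inequality, pinning $\Fp^m(\pi)=\{\udl{p}_0^{\RG_n}\}$ exactly as in the argument preceding Proposition \ref{ik}. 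The construction of such $\pi$ would be attempted by theta correspondence from a smaller (quasi-split) classical or metaplectic group whose discrete spectrum transfers to this parameter, or alternatively by the twisted automorphic descent of \cite{JLXZ,JZ}; here one would pattern the choice after the Ikeda construction, taking $\psi$ of the form $(\tau,3)\boxplus\cdots$ with $\tau\in\CA_\cusp(\GL_2)$ of appropriate type, and verify cuspidality along the lines of \cite{JLZ13}.

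The principal obstacle is that Ginzburg's conjecture on odd parts is itself open, so an unconditional proof would have to bypass it, presumably by a direct Fourier-Jacobi analysis of cuspidal forms on $\SO_{2n+1}$ and $\SO_{2n}$ analogous to Section 6 of this paper. A secondary and substantial difficulty is the sharpness half: the Ikeda-type construction available for $\Sp_{4e}$ over totally real $F$ has no known direct analogue on orthogonal groups over general number fields, and the requisite theta-lift or descent construction would have to be worked out case by case, with the nonvanishing of the relevant period integrals or residues established in each case. For this reason I expect the lower bound to be within reach assuming \cite{G06}, while sharpness will remain the genuinely hard part of the conjecture.
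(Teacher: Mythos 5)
This statement is a \emph{conjecture} in the paper; the authors state it without proof, motivated only by the heuristic in the preceding paragraph (non-singularity of cuspidal forms, specialness of partitions in $\Fp^m(\pi)$, and Ginzburg's conjecture that cuspidal $\pi$ on orthogonal groups have Fourier coefficients supported only on odd-part partitions). So there is no proof of record for your attempt to be compared against, and you are right to frame your write-up as a plan rather than a proof, and to flag both Ginzburg's conjecture and the sharpness half as genuinely open.

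However, your reduction of the lower-bound half to a ``finite calculation of partial sums'' does not actually produce $\udl{p}_0^{\RG_n}$, so that step is a genuine gap even conditionally on \cite{G06}. Take $\RG_n=\SO_{2n}$ with $n=2e+1$ and $e=1$, so $\RG_n=\SO_6$ and $2n=6$. The conjectured bound is $\udl{p}_0^{\SO_6}=[51]$ and the non-singular partition is $\udl{p}_\ns^{\SO_6}=[2^21^2]$. But $[31^3]$ is a valid $D_3$-partition of $6$, consists only of odd parts, is special (indeed $D_3\cong A_3$, so every orbit is special), and dominates $[2^21^2]$ since its partial sums $3,4,5,6$ termwise dominate $2,4,5,6$; yet $[31^3]<[51]$ under the dominance order (compare partial sums $3,4$ with $5,6$). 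The same phenomenon persists for general $e$: the transpose criterion shows $[3^e1^{e+2}]$ is a special odd-part $D_{2e+1}$-partition dominating $[2^{2e}1^2]$, and it is strictly smaller than $[53^{e-1}1^e]$. An analogous issue appears in the $\SO_{2n+1}$ cases. So the three constraints you invoke determine a weaker lower bound than what the conjecture asserts; the conjectured $\udl{p}_0^{\RG_n}$ must encode further input --- most plausibly expectations from descent/theta constructions on orthogonal groups, or an orthogonal analogue of Theorem~\ref{ti} (which in the paper is stated only for $\Sp_{2n}$ and $\wt{\Sp}_{2n}$) --- and that missing constraint is precisely what your plan would need to supply. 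Your assessment of the sharpness half as the other hard point is accurate.
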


We note that a shape lower bound partition for the Fourier coefficients of all irreducible cuspidal representations of $\RG_n(\BA)$ involves
deep arithmetic of the base field $F$, which is one of the main concerns in our investigation.
Following the line of ideas in \cite{H81} and \cite{Li92}, we define the following set of {\sl small partitions} for the cuspidal spectrum of $\RG_n(\BA)$:
\begin{equation}\label{smallpartitions}
\Fp_{\sm}^{\RG_n, F} := \min \{\ul{p} \in \Fp^m(\pi)\ |\ \text{ for some}\ \pi\in\CA_\cusp(\RG_n) \}.
\end{equation}
We call a $\pi\in\CA_\cusp(\RG_n)$ {\sl small} if $\frak{p}^m(\pi) \cap \frak{p}_{\sm}^{\RG_n, F}$ is not empty. Our discussion
for small cuspidal automorphic representations will resume in Section \ref{SCAR}.

\section{On Cuspidality for General Number Fields}\label{CGNF}

In this section, we assume that $F$ is a general number field. We mainly consider the {\sl cuspidality problem} for the global Arthur packets
with a family of global Arthur parameters of form:
$$
\psi=(\chi,b) \oplus (\tau_2, b_2) \oplus \cdots \oplus (\tau_r,b_r)\in\wt{\Psi}_2(\Sp_{2n}).
$$
When $b$ is large, it is most likely that the corresponding global Arthur packet $\wt{\Pi}_\psi(\Sp_{2n})$ contains no cuspidal members.

Recall from Section 2.2 that by Conjecture \ref{J14} for $\RG_n=\Sp_{2n}$, for any $\pi\in\wt{\Pi}_{\psi}(\Sp_{2n})\cap\CA_2(\Sp_{2n})$,
it is expected that for any partition $\ul{p}\in\Fp^m(\pi)$, one should have
\begin{equation}\label{upbfc2}
\ul{p}\leq \eta(\ul{p}_{\psi}),
\end{equation}
under the dominance order of partitions. We will take this as an {\sl assumption} for the discussion in this section.

For $\psi=(\chi,b) \oplus (\tau_2, b_2) \oplus \cdots \oplus (\tau_r,b_r) \in \widetilde{\Psi}_2(\Sp_{2n})$, with $\chi$ a quadratic character,
the partition associated to $\psi$ is
$$
\ul{p}_{\psi}=[(b)^1 (b_2)^{a_2} \cdots (b_r)^{a_r}].
$$
By the  definition of Arthur parameters for $\Sp_{2n}$, $b$ is automatically odd.
As explained in Remark \ref{rmkbv},
$\eta(\ul{p}_{\psi})
=((\ul{p}_{\psi}^t)^{-})_{\Sp}$.
Assume that $b >b_0:=\max(b_2, \ldots, b_r)$, then
$$\ul{p}_{\psi}^t=[(1)^b]+[(a_2)^{b_2}]+\cdots + [(a_r)^{b_r}]$$
has the form $[(1+\sum_{i=2}^r a_i)p_2 \cdots p_{b_0} (1)^{b-b_0}]$, and
$$(\ul{p}_{\psi}^t)^{-}=[(1+\sum_{i=2}^r a_i)p_2 \cdots p_{b_0} (1)^{b-b_0-1}].$$
After taking the symplectic collapse, $\eta(\ul{p}_{\psi})
= ((\ul{p}_{\psi}^t)^{-})_{\Sp}$ has the form
$$[q_1 q_2\cdots q_k (1)^m],$$
with $m \leq b-1-\sum_{i=2}^r b_i$, and $k+m=b-1$.

If there is a $\pi\in\wt{\Pi}_{\psi}(\Sp_{2n})\cap\CA_\cusp(\Sp_{2n})$, by Theorem \ref{li}, $\pi$ has a nonzero Fourier coefficient attached to the partition $[2^n]$. It is clear that $b>n+1$ if and only if  $[2^n]$ is either bigger than or not related to the above partition $[q_1 q_2\cdots q_k (1)^m]$. Hence, we have the following result.

\begin{thm}\label{unip}
Assume that $\eqref{upbfc2}$ holds. For
$$\psi=(\chi,b) \oplus (\tau_2, b_2) \oplus \cdots \oplus (\tau_r,b_r)\in \widetilde{\Psi}_2(\Sp_{2n})$$
with  $\chi$ a quadratic character, if $b>n+1$, then the intersection
$$\wt{\Pi}_{\psi}(\Sp_{2n})\cap\CA_\cusp(\Sp_{2n})$$
is empty.
\end{thm}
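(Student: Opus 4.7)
The plan is to derive a contradiction by combining the upper-bound assumption \eqref{upbfc2} on Fourier coefficients attached to nilpotent orbits with the lower-bound coming from Li's non-singularity theorem (Theorem \ref{li}).

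First, suppose for contradiction that some $\pi$ belongs to $\wt{\Pi}_{\psi}(\Sp_{2n})\cap\CA_\cusp(\Sp_{2n})$. By Theorem \ref{li}, $\pi$ is non-singular, and so $\pi$ carries a nonzero Fourier coefficient attached to the partition $\udl{p}_\ns=[2^n]$ (in the reformulation of Proposition \ref{lbpt}). Hence $[2^n]\in\Fp(\pi)$, and some $\ul{p}\in\Fp^m(\pi)$ satisfies $[2^n]\leq\ul{p}$ under the dominance order. The assumption \eqref{upbfc2} then gives $\ul{p}\leq\eta(\ul{p}_\psi)$, so by transitivity $[2^n]\leq\eta(\ul{p}_\psi)$. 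It therefore suffices to prove that this dominance inequality fails whenever $b>n+1$.

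Next, I would analyze $\eta(\ul{p}_\psi)=((\ul{p}_\psi^{t})^{-})_\Sp$ via Remark \ref{rmkbv}. Since $\chi$ is a quadratic character, $a_1=1$ and $b_1=b$, while the dimension constraint $\sum_i a_ib_i=2n+1$ gives $\sum_{i=2}^r a_ib_i = 2n+1-b$. The hypothesis $b>n+1$ therefore forces $b_0:=\max(b_2,\ldots,b_r)<b$, and this is precisely the condition under which the discussion preceding the theorem describes $\ul{p}_\psi^{t}$ as having first part $1+\sum_{i=2}^r a_i$, a middle segment $p_2\cdots p_{b_0}$, and a tail of $b-b_0$ ones; in particular $\ul{p}_\psi^{t}$ has length $b$. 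Subtracting one from the last part yields $(\ul{p}_\psi^{t})^{-}$ of length $b-1$, and the symplectic collapse, computed by the recipe in \cite[Lemma 6.3.8]{CM93}, preserves this length. Consequently $\eta(\ul{p}_\psi)$ has the form $[q_1\cdots q_k\,(1)^m]$ with $k+m=b-1$ and every listed part positive.

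Finally, when $b>n+1$ the partition $\eta(\ul{p}_\psi)$ has $b-1>n$ positive parts summing to $2n$, so the tail beyond position $n$ contributes at least $b-1-n\geq 1$, forcing
\[
\sum_{j=1}^n \bigl(\eta(\ul{p}_\psi)\bigr)_j \;\leq\; 2n-(b-1-n)\;<\;2n.
\]
This contradicts the requirement $\sum_{j=1}^n(\eta(\ul{p}_\psi))_j\geq 2n$ entailed by $[2^n]\leq\eta(\ul{p}_\psi)$, completing the argument. The main delicacy in the plan is the length-preservation claim for the symplectic collapse applied to $(\ul{p}_\psi^{t})^{-}$; granted that, the remainder is a short counting argument, and no further input is used beyond Theorem \ref{li} and the assumption \eqref{upbfc2}.
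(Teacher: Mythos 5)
Your proposal is correct and is essentially the paper's own proof: combine Theorem \ref{li} with the assumed bound \eqref{upbfc2} and check via the Barbasch-Vogan dual $\eta(\ul{p}_\psi)=((\ul{p}_\psi^t)^-)_\Sp$ that $[2^n]\not\leq\eta(\ul{p}_\psi)$ once $b>n+1$. The delicacy you flag is harmless: since $\eta(\ul{p}_\psi)\leq(\ul{p}_\psi^t)^-$ in dominance order with both partitions of $2n$, the collapse has \emph{at least} as many positive parts as $(\ul{p}_\psi^t)^-$ (collapse can add parts, never remove them), so the lower bound of $b-1$ positive parts --- which is all your final counting step needs --- holds without having to prove exact length preservation.
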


Here is an example illustrating the theorem.

\begin{exmp}
Consider $\psi = (\chi, 7) \oplus (\tau,2)\in\widetilde{\Psi}_2(\Sp_{10})$, where $\chi=1_{\GL_1(\BA)}$, and $\tau\in\CA_\cusp(\GL_2)$
with $L(s, \tau, \wedge^2)$ having a pole at $s=1$. $\ul{p}_{\psi}=[72^2]$ and $\eta(\ul{p}_{\psi})=[3^21^4]$, which is not related to $[2^5]$. Hence, by the assumption that $\eqref{upbfc2}$ holds, there is no cuspidal members in the global Arthur packet $\wt{\Pi}_{\psi}(\Sp_{10})$.
\end{exmp}

\begin{rmk}\label{rmkKR}
In \cite[Theorem 7.2.5]{KR94}, Kudla and Rallis show that for a given $\pi\in\CA_\cusp(\Sp_{2n})$ and a quadratic character $\chi$,
the $L$-function $L(s, \pi \times \chi)$ has the right-most possible pole at $s=1+[\frac{n}{2}]$. This implies that
the simple global Arthur parameter of type $(\chi,b)$ occurring in the global Arthur parameter of $\pi$ must have the condition that
$b$ is at most $1+2[\frac{n}{2}]$. Because $b$ has to be odd in this case, it follows that $b$ is at most $n+1$ if $n$ is even, and
$b$ is at most $n$ if $n$ is odd. In any case, one obtains that if $b > n+1$, then the simple global Arthur parameter of type $(\chi,b)$ can
not occur in the global Arthur parameter of $\pi$ for any $\pi\in\CA_\cusp(\Sp_{2n})$. This matches the result in the above theorem.
\end{rmk}

\section{On Cuspidality for Totally Imaginary Fields}\label{CTIF}

In this section, we assume that $F$ is a totally imaginary number field. We show that there are more global Arthur packets
contain no cuspidal members. It is an interesting question to discover the significance of such a difference depending on the
arithmetic of the ground field $F$.

\subsection{On criteria for cuspidality}
For any $\ul{a}=(a_1, a_2, \ldots, a_r) \in \BZ_{\geq 1}^r$, define a set $B_{\ul{a}}$, depending only on $\ul{a}$,
to be the subset of $\BZ_{\geq 1}^r$ that consists of all $r$-tuples $\ul{b}=(b_1, b_2, \ldots, b_r)$ with the property:
There are some self-dual $\tau_i\in\CA_\cusp(\GL_{a_i})$ for $1 \leq i \leq r$, such that
$$
\psi=(\tau_1, b_1) \boxplus (\tau_2, b_2) \boxplus \cdots \boxplus (\tau_r, b_r)
$$
belongs to $\wt{\Psi}_2(\Sp_{2n})$ for some $n\geq 1$ with $2n+1=\sum_{i=1}^ra_ib_i$.
We define an integer $N_{\ul{a}}$ depending only on $\ul{a}$ by
\begin{equation}\label{Na}
N_{\ul{a}}=\begin{cases}
(\sum_{i=1}^r a_i)^2+2(\sum_{i=1}^r a_i) &\textit{if}\  \sum_{i=1}^r a_i \textit{is even};\\
(\sum_{i=1}^r a_i)^2-1 & \textit{otherwise}.
\end{cases}
\end{equation}

\begin{thm}\label{ncmain1}
Assume that $F$ is a totally imaginary number field.
Given an $\ul{a}=(a_1, a_2, \ldots, a_r) \in \BZ_{\geq 1}^r$ that defines the set $B_{\ul{a}}$ and the integer $N_{\ul{a}}$ as above.
For any $\ul{b}=(b_1, b_2, \ldots, b_r) \in B_{\ul{a}}$, write $2n+1=\sum_{i=1}^ra_ib_i$.
If the condition
$$
2n=(\sum_{i=1}^r a_ib_i)-1 > N_{\ul{a}}
$$
holds, then for any global Arthur parameter $\psi$ of the form
$$
\psi=(\tau_1, b_1) \boxplus (\tau_2, b_2) \boxplus \cdots \boxplus (\tau_r, b_r)\in\wt{\Psi}_2(\Sp_{2n}),
$$
with $\tau_i\in\CA_\cusp(\GL_{a_i})$ for $i=1,2,\cdots,r$, $\wt{\Pi}_\psi(\Sp_{2n}) \cap \CA_2(\Sp_{2n})$ contains no cuspidal members.
\end{thm}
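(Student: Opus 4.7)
The plan is to argue by contradiction. Suppose the intersection $\wt{\Pi}_\psi(\Sp_{2n}) \cap \CA_\cusp(\Sp_{2n})$ is nonempty and choose a $\pi$ in it. Since $F$ is totally imaginary, Theorem \ref{ti} produces a partition
$$\ul{p}_\pi = [(2n_1)^{s_1}(2n_2)^{s_2}\cdots (2n_k)^{s_k}] \in \Fp^m(\pi),$$
consisting only of even parts, with $2n_1 > 2n_2 > \cdots > 2n_k$ and all multiplicities $s_i \leq 4$. Since $\CA_\cusp(\Sp_{2n}) \subset \CA_2(\Sp_{2n})$, Theorem \ref{ub} forces $\ul{p}_\pi \leq \eta(\ul{p}_\psi)$ under the lexicographical order of partitions. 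The strategy is to combine these two constraints to show $|\ul{p}_\pi|=2n \leq N_{\ul{a}}$, contradicting the hypothesis.

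The first step is to bound the leading part of $\eta(\ul{p}_\psi)$ by $A := \sum_{i=1}^r a_i$. Indeed, the leading part of the transpose $\ul{p}_\psi^t$ equals the total number of parts of $\ul{p}_\psi$, which is $A$; the operation $\ul{p} \mapsto \ul{p}^-$ subtracts $1$ from the smallest part only, leaving the leading part unchanged; and the symplectic collapse, being the largest symplectic partition dominated by $(\ul{p}_\psi^t)^-$, cannot increase the leading part either, since the first partial sum under the dominance order is precisely the first part. Comparing first parts via Theorem \ref{ub} then gives $2n_1 \leq A$, and because $2n_1$ is even this sharpens to $2n_1 \leq A$ when $A$ is even and $2n_1 \leq A-1$ when $A$ is odd.

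The final step is a direct combinatorial maximization. Among partitions whose parts are strictly decreasing positive even integers with multiplicity at most $4$ each, and with leading part constrained as above, the total size is maximized by taking every admissible even part with full multiplicity $4$. Summing the resulting arithmetic progression and multiplying by $4$ yields
$$
\max |\ul{p}_\pi|
=
\begin{cases}
4(2+4+\cdots+A) = A^2+2A, & A \text{ even}, \\
4(2+4+\cdots+(A-1)) = A^2-1, & A \text{ odd},
\end{cases}
$$
which matches $N_{\ul{a}}$ exactly. Hence $2n \leq N_{\ul{a}}$, contradicting the hypothesis $2n > N_{\ul{a}}$.

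The principal technical input is Theorem \ref{ti}: the multiplicity bound $s_i \leq 4$ in the maximal even partition, which is exactly where the totally imaginary hypothesis on $F$ enters, is what makes the threshold $N_{\ul{a}}$ effective. Without this restriction one could stack arbitrarily many copies of each admissible even part and no contradiction would arise. The only auxiliary point requiring care is the leading-part bound for the symplectic collapse in the first step, which follows directly from its definition via the dominance order as recalled in Remark \ref{rmkbv}.
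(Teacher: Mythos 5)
Your proof is correct and follows essentially the same route as the paper's: combine the lexicographical bound from Theorem \ref{ub} with the even-partition-multiplicity-at-most-four constraint from Theorem \ref{ti}, then maximize $|\ul{p}_\pi|$ to reach $2n \le N_{\ul{a}}$. The paper splits into three explicit cases for $\eta(\ul{p}_\psi)$ according to the parity of $\sum a_i$ and the effect of the symplectic collapse, whereas you consolidate this into a single clean observation that the leading part is at most $A=\sum a_i$ together with the parity of $2n_1$; this is a tidier presentation of the same argument.
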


\begin{proof}
By assumption, $\psi=\boxplus_{i=1}^r (\tau_i, b_i)$ belongs to $\wt{\Psi}_2(\Sp_{2n})$.
Recall that
$$
\ul{p}_{\psi}=[(b_1)^{a_1} (b_2)^{a_2} \cdots (b_r)^{a_r}]
$$
is the partition of $2n+1$ attached to $\psi$.
By Remark \ref{rmkbv},
$\eta(\ul{p}_{\psi})=((\ul{p}_{\psi}^t)^{-})_{\Sp}$.
Then the Barbasch-Vogan dual $\eta(\ul{p}_{\psi})$ has the following form
\begin{equation}\label{bvsp}
[(\sum_{i=1}^r a_i)p_2 \cdots p_s]_{\Sp},
\end{equation}
where $\sum_{i=1}^r a_i \geq p_2 \geq \cdots \geq p_s$.
After taking the symplectic collapse of the partition in \eqref{bvsp}, one obtains that $\eta(\ul{p}_{\psi})$ must be one of the
following three possible forms:
\begin{enumerate}
\item It equals $[(\sum_{i=1}^r a_i)p_2 \cdots p_s]$ if $\sum_{i=1}^r a_i$ is even and
$$\sum_{i=1}^r a_i \geq p_2 \geq \cdots \geq p_s.$$
\item It equals $[(\sum_{i=1}^r a_i)p_2 \cdots p_s]$ if $\sum_{i=1}^r a_i$ is odd and
$$\sum_{i=1}^r a_i \geq p_2 \geq \cdots \geq p_s.$$
\item It equals $[((\sum_{i=1}^r a_i)-1)p_2 \cdots p_s]$ if $(\sum_{i=1}^r a_i)$ is odd and
$$(\sum_{i=1}^r a_i)-1 \geq p_2 \geq \cdots \geq p_s.$$
\end{enumerate}
Assume that $\pi$ belongs to $\wt{\Pi}_{\psi}(\Sp_{2n})\cap\CA_\cusp(\Sp_{2n})$. By Theorem \ref{ti}, one may assume that
$$
\ul{p}_{\pi} = [(2n_1)^{s_1} (2n_2)^{s_2} \cdots (2n_r)^{s_r}]\in\frak{p}^m(\pi)
$$
with $n_1 > n_2 > \cdots > n_k \geq 1$ and with the property that $1 \leq s_i \leq 4$ holds for $1 \leq i \leq r$.

{\bf Case 1:}\ By Theorem \ref{ub}, we have $2n_1 \leq \sum_{i=1}^r a_i$.
It follows that
\begin{eqnarray*}
2n
&=&\sum_{i=1}^r 2n_is_i \\
&\leq& 4(2 + 4 + 6 + \cdots + \sum_{i=1}^r a_i)\\
&=&(\sum_{i=1}^r a_i)^2+2(\sum_{i=1}^r a_i)=N_{\ul{a}}.
\end{eqnarray*}

{\bf Cases 2 and 3:}\ By Theorem \ref{ub}, we have $2n_1 \leq (\sum_{i=1}^r a_i)-1$.
It follows that
\begin{eqnarray*}
2n
&=&\sum_{i=1}^r 2n_is_i \\
&\leq& 4(2 + 4 + 6 + \cdots + (\sum_{i=1}^r a_i)-1)\\
&=&(\sum_{i=1}^r a_i)^2-1=N_{\ul{a}}.
\end{eqnarray*}

Now it is easy to check that for any $r$-tuple $\ul{b}=(b_1, b_2, \ldots, b_r) \in B_{\ul{a}}$,
if $2n=(\sum_{i=1}^r a_ib_i)-1 > N_{\ul{a}}$, then the global Arthur packets $\wt{\Pi}_\psi(\Sp_{2n})$ associated to
any global Arthur parameters of the form
$$
\psi=(\tau_1, b_1) \boxplus (\tau_2, b_2) \boxplus \cdots \boxplus (\tau_r, b_r)
$$
contain no cuspidal members. This completes the proof of the theorem.
\end{proof}

Note that in Theorem \ref{ncmain1}, for a given $\ul{a}=(a_1, a_2, \ldots, a_r) \in \BZ_{\geq 1}^r$, the integer $n$ defining
the group $\Sp_{2n}$ depends on the choice of $\ul{b}=(b_1, b_2, \ldots, b_r) \in B_{\ul{a}}$. We may reformulate the result for a given
group $\Sp_{2n}$ as follows.

For any $r$-tuple $\ul{a}=(a_1, a_2, \ldots, a_r) \in \BZ_{\geq 1}^r$, define $B_{\ul{a}}^{2n}$ to be the subset of $\BZ_{\geq 1}^r$,
consisting of $r$-tuples $\ul{b}=(b_1, b_2, \ldots, b_r)$ such that
$$
\psi=(\tau_1, b_1) \boxplus (\tau_2, b_2) \boxplus \cdots \boxplus (\tau_r, b_r)\in\wt{\Psi}_2(\Sp_{2n})
$$
for some self-dual $\tau_i\in\CA_\cusp(\GL_{a_i})$ with $1 \leq i \leq r$. Note that this set $B_{\ul{a}}^{2n}$ could be empty in this formulation.
The integer $N_{\ul{a}}$ is defined to be the same as in \eqref{Na}.
Theorem \ref{ncmain1} can be reformulated as follows.

\begin{thm}\label{ncmain2}
Assume that $F$ is a totally imaginary number field and
that $\ul{a}=(a_1, a_2, \ldots, a_r) \in \BZ_{\geq 1}^r$ has a non-empty $B_{\ul{a}}^{2n}$.
If $2n > N_{\ul{a}}$, then for any global Arthur parameter $\psi$ of the form
$$
\psi=(\tau_1, b_1) \boxplus (\tau_2, b_2) \boxplus \cdots \boxplus (\tau_r, b_r)\in\wt{\Psi}_2(\Sp_{2n}),
$$
with $\tau_i\in\CA_\cusp(\GL_{a_i})$ for $i=1,2,\cdots,r$, $\wt{\Pi}_\psi(\Sp_{2n}) \cap \CA_2(\Sp_{2n})$ contains no cuspidal members.
\end{thm}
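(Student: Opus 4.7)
The plan is to observe that Theorem~\ref{ncmain2} is the specialization of Theorem~\ref{ncmain1} to a fixed group $\Sp_{2n}$: once $\ul{a}$ is fixed and $2n$ is fixed, the $r$-tuples $\ul{b} \in B_{\ul{a}}^{2n}$ are precisely those $\ul{b} \in B_{\ul{a}}$ compatible with this particular value of $n$. So I would argue for contradiction exactly as in the proof of Theorem~\ref{ncmain1}: assume $\pi \in \wt{\Pi}_\psi(\Sp_{2n}) \cap \CA_\cusp(\Sp_{2n})$ exists, and derive $2n \leq N_{\ul{a}}$.

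First, since $F$ is totally imaginary, Theorem~\ref{ti} supplies an even partition
\[
\ul{p}_\pi = [(2n_1)^{s_1}(2n_2)^{s_2} \cdots (2n_k)^{s_k}] \in \frak{p}^m(\pi)
\]
with $2n_1 > 2n_2 > \cdots > 2n_k \geq 2$ and $s_i \leq 4$. This is the totally-imaginary input that both forces all parts to be even and bounds the multiplicities. Next, I apply Theorem~\ref{ub} to obtain $\ul{p}_\pi \leq \eta(\ul{p}_\psi)$ under the lexicographical order; in particular $2n_1$ is bounded above by the first part of $\eta(\ul{p}_\psi)$.

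The second step is to compute this leading part. By the Barbasch-Vogan recipe $\eta(\ul{p}_\psi) = ((\ul{p}_\psi^t)^-)_{\Sp}$, the first part of $\ul{p}_\psi^t$ is $\sum_{i=1}^r a_i$; after the $(\cdot)^-$ operation and the symplectic collapse, the leading part equals $\sum_i a_i$ if this sum is even, and is at most $\sum_i a_i - 1$ if this sum is odd (exactly the case distinction already carried out in the proof of Theorem~\ref{ncmain1}). Consequently $2n_1 \leq \sum_i a_i$ in the even case and $2n_1 \leq \sum_i a_i - 1$ in the odd case.

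Finally, since the $n_i$ are strictly decreasing positive integers and each $s_i \leq 4$, the triangular estimate
\[
2n = \sum_{i=1}^k 2n_i s_i \leq 4(2 + 4 + \cdots + 2n_1) = 2n_1(2n_1 + 2)
\]
yields $2n \leq (\sum_i a_i)^2 + 2(\sum_i a_i) = N_{\ul{a}}$ in the even case, and $2n \leq (\sum_i a_i - 1)(\sum_i a_i + 1) = (\sum_i a_i)^2 - 1 = N_{\ul{a}}$ in the odd case. Either way this contradicts the hypothesis $2n > N_{\ul{a}}$. There is no genuine obstacle, as the Barbasch-Vogan computation of the leading part and the triangular-sum estimate are the same two ingredients used for Theorem~\ref{ncmain1}; the main thing to be careful about is the parity case distinction in the symplectic collapse, which is where the two formulas for $N_{\ul{a}}$ come from.
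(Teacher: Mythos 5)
Your proposal is correct and follows exactly the paper's approach: the paper gives no separate proof of Theorem~\ref{ncmain2}, remarking only that it is a reformulation of Theorem~\ref{ncmain1} for a fixed group $\Sp_{2n}$, and your argument is precisely the proof of Theorem~\ref{ncmain1} (Theorem~\ref{ti} for the even partition with bounded multiplicities, Theorem~\ref{ub} for the lexicographical bound via the Barbasch--Vogan dual, the parity case split on $\sum_i a_i$ for the symplectic collapse, and the triangular estimate $4(2+4+\cdots+2n_1)=2n_1(2n_1+2)$) specialized to a fixed $2n$. One tiny imprecision worth noting: in the odd case the leading part of $\eta(\ul{p}_\psi)$ may still equal $\sum_i a_i$ rather than being at most $\sum_i a_i - 1$; what is true (and what you correctly conclude) is that $2n_1 \leq \sum_i a_i - 1$ because $2n_1$ is even.
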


On one hand, the integer $N_{\ul{a}}$ is not hard to calculate. This makes Theorems \ref{ncmain1} and \ref{ncmain2} easy to use.
On the other hand, the integer $N_{\ul{a}}$ depends only on $\ul{a}$, and hence may not carry enough information for some applications.
Next, we try to improve the above bound $N_{\ul{a}}$, by defining a new bound $N_{\ul{a},\ul{b}}^{(1)}$, depending on both $\ul{a}$ and $\ul{b}$.

For a partition $\ul{p}=[p_1p_2\cdots p_r]$, set $\lvert \ul{p} \rvert=\sum_{i=1}^r p_i$.
Given an $\ul{a}=(a_1, a_2, \ldots, a_r) \in \BZ_{\geq 1}^r$ that defines the set $B_{\ul{a}}$.
For any $\ul{b}=(b_1, b_2, \ldots, b_r)$ belonging to the set $B_{\ul{a}}$ that defines the integer $n$ with
$$
2n+1=\sum_{i=1}^ra_ib_i,
$$
the new bound $N_{\ul{a},\ul{b}}^{(1)}$ is defined to be maximal value of $\lvert \ul{p} \rvert$ for all symplectic partitions $\ul{p}$,
which may not be a partition of $2n$,
satisfying the following conditions:
\begin{enumerate}
\item $\ul{p} \leq \eta(\ul{p}_{\psi})$ under the {\sl lexicographical order} of partitions as in Definition \ref{orders}, and
\item $\ul{p}$ has the form $[(2n_1)^{s_1}(2n_2)^{s_2} \cdots (2n_r)^{s_r}]$ with $2n_1 > 2n_2 > \cdots > 2n_r$ and
$s_i \leq 4$ for $1 \leq i \leq r$.
\end{enumerate}
Note that the integer $N_{\ul{a},\ul{b}}^{(1)}$ depends on $\ul{b}$ through Condition (1) above. For this new bound,
we have the following result.

\begin{thm}\label{ncmain3}
Assume that $F$ is a totally imaginary number field. Given an
$\ul{a}=(a_1, a_2, \ldots, a_r) \in \BZ_{\geq 1}^r$ that defines the set $B_{\ul{a}}$.
For any $\ul{b}=(b_1, b_2, \ldots, b_r) \in B_{\ul{a}}$, if $2n=(\sum_{i=1}^r a_ib_i)-1 > N_{\ul{a},\ul{b}}^{(1)}$,
then for any global Arthur parameter $\psi$ of the form
$$
\psi=(\tau_1, b_1) \boxplus (\tau_2, b_2) \boxplus \cdots \boxplus (\tau_r, b_r)\in\wt{\Psi}_2(\Sp_{2n}),
$$
with $\tau_i\in\CA_\cusp(\GL_{a_i})$ for $i=1,2,\cdots,r$, $\wt{\Pi}_\psi(\Sp_{2n}) \cap \CA_2(\Sp_{2n})$ contains no cuspidal members.
\end{thm}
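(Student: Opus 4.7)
The plan is to prove Theorem \ref{ncmain3} by essentially the same strategy as Theorem \ref{ncmain1}, but using the sharper, $\ul{b}$-dependent bound $N_{\ul{a},\ul{b}}^{(1)}$ built directly from the Barbasch--Vogan dual $\eta(\ul{p}_\psi)$ rather than from the coarse quantity $N_{\ul{a}}$. The argument will proceed by contradiction: suppose $\pi \in \wt{\Pi}_\psi(\Sp_{2n}) \cap \CA_\cusp(\Sp_{2n})$ and show that this forces $2n \leq N_{\ul{a},\ul{b}}^{(1)}$, contradicting the hypothesis.

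First, I would invoke Theorem \ref{ti}, which applies because $F$ is totally imaginary and $\pi$ is cuspidal. This produces an even partition
$$\ul{p}_\pi = [(2n_1)^{s_1}(2n_2)^{s_2}\cdots (2n_k)^{s_k}] \in \frak{p}^m(\pi)$$
with $2n_1 > 2n_2 > \cdots > 2n_k$ and each $s_i \leq 4$. By construction $|\ul{p}_\pi| = 2n$, and the partition has exactly the shape appearing in Condition (2) of the definition of $N_{\ul{a},\ul{b}}^{(1)}$.

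Next, I would apply Theorem \ref{ub}: since $\pi \in \wt{\Pi}_\psi(\Sp_{2n}) \cap \CA_2(\Sp_{2n})$, the Barbasch--Vogan dual $\eta(\ul{p}_\psi)$ bounds every partition in $\frak{p}^m(\pi)$ from above under the \emph{lexicographical} order. In particular, $\ul{p}_\pi \leq \eta(\ul{p}_\psi)$ lexicographically, so $\ul{p}_\pi$ also satisfies Condition (1). Therefore $\ul{p}_\pi$ is one of the partitions competing in the definition of the maximum $N_{\ul{a},\ul{b}}^{(1)}$, which yields $2n = |\ul{p}_\pi| \leq N_{\ul{a},\ul{b}}^{(1)}$. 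Combined with the hypothesis $2n > N_{\ul{a},\ul{b}}^{(1)}$, this is a contradiction, proving that no cuspidal $\pi$ can exist in $\wt{\Pi}_\psi(\Sp_{2n})$.

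The argument is conceptually routine once Theorems \ref{ti} and \ref{ub} are in hand; the only delicate point is the compatibility of the orderings. The key technical issue I would double-check is that the lexicographical comparison in Theorem \ref{ub} is indeed strong enough: Condition (1) in the definition of $N_{\ul{a},\ul{b}}^{(1)}$ is posed lexicographically precisely so that the output of Theorem \ref{ub} can be fed in directly, without having to pass to the (partial) dominance order. In particular, one should verify that partitions $\ul{p}$ satisfying (1) and (2) may have $|\ul{p}|$ differing from $2n$, so that taking the maximum of $|\ul{p}|$ over all admissible shapes is a legitimate (finite) supremum; this is built into the definition and imposes no extra work.
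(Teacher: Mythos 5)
Your proof is correct and follows essentially the same route as the paper: invoke Theorem \ref{ti} to produce the even partition $\ul{p}_\pi$ with bounded multiplicities, invoke Theorem \ref{ub} to give the lexicographical bound $\ul{p}_\pi \leq \eta(\ul{p}_\psi)$, conclude that $\ul{p}_\pi$ competes in the maximum defining $N_{\ul{a},\ul{b}}^{(1)}$, and derive the contradiction $2n \leq N_{\ul{a},\ul{b}}^{(1)}$. Your closing remark on why Condition (1) is stated lexicographically is exactly the point the definition of $N_{\ul{a},\ul{b}}^{(1)}$ is designed around.
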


\begin{proof}
Assume that there is a $\pi \in \wt{\Pi}_\psi(\Sp_{2n})\cap\CA_\cusp(\Sp_{2n})$.  By Theorem \ref{ub},
for any $\ul{p} \in \frak{p}^m(\pi)$, which is a partition of $2n$, we must have that $\ul{p} \leq \eta(\ul{p}_{\psi})$
under the lexicographical order of partitions. In particular, the even partition $\ul{p}_{\pi} \in \frak{p}^m(\pi)$,
constructed in \cite{GRS03}, enjoys this property. On the other hand, since $F$ is totally imaginary, by Theorem \ref{ti},
$\ul{p}_{\pi}$ has the form $[(2n_1)^{s_1}(2n_2)^{s_2} \cdots (2n_r)^{s_r}]$ with $2n_1 > 2n_2 > \cdots > 2n_r$ and
$s_i \leq 4$ for $1 \leq i \leq r$. Hence, $\ul{p}_{\pi}$ satisfies the above two conditions defining the bound $N_{\ul{a},\ul{b}}^{(1)}$.
It follows that $N_{\ul{a},\ul{b}}^{(1)} \geq 2n=\lvert \ul{p}_\pi \rvert$. This contradicts the assumption that $2n > N_{\ul{a},\ul{b}}^{(1)}$.
\end{proof}

If we assume that Part (1) of Conjecture \ref{J14} holds, namely, $\eta(\ul{p}_{\psi})$ is bigger than or equal to any $\ul{p} \in \frak{p}^m(\pi)$, under the dominance order of partitions, for all $\pi \in \wt{\Pi}_{\psi}(\Sp_{2n})\cap\CA_2(\Sp_{2n})$,
we may replace the bound $N_{\ul{a},\ul{b}}^{(1)}$ by an even better bound $N_{\ul{a},\ul{b}}^{(2)}$ as follows.

Given an $\ul{a}=(a_1, a_2, \ldots, a_r) \in \BZ_{\geq 1}^r$ that defines the set $B_{\ul{a}}$.
For any $\ul{b}=(b_1, b_2, \ldots, b_r) \in B_{\ul{a}}$ that defines the integer $n$ with
$$
2n+1=\sum_{i=1}^ra_ib_i,
$$
the new bound $N_{\ul{a},\ul{b}}^{(2)}$ is defined to be the maximal value of $\lvert \ul{p} \rvert$ for all symplectic partitions $\ul{p}$,
which may not be a partition of $2n$, satisfying the following conditions:
  \begin{enumerate}
  \item $\ul{p} \leq \eta(\ul{p}_{\psi})$ under the {\sl dominance order} of partitions, as in Definition \ref{orders}, and
  \item $\ul{p}$ has the form $[(2n_1)^{s_1}(2n_2)^{s_2} \cdots (2n_r)^{s_r}]$ with $2n_1 > 2n_2 > \cdots > 2n_r$ and $s_i \leq 4$ holds for $1 \leq i \leq r$.
  \end{enumerate}
It is clear that the integer $N_{\ul{a},\ul{b}}^{(2)}$ depends on $\ul{b}$ through Condition (1) above.
By assuming Part (1) of Conjecture \ref{J14}, we can prove the following with this new bound.

\begin{thm}\label{ncmain4}
Assume that $F$ is a totally imaginary number field, and that Part (1) of Conjecture \ref{J14} is true.
Given an $\ul{a}=(a_1, a_2, \ldots, a_r) \in \BZ_{\geq 1}^r$ that defines the set $B_{\ul{a}}$.
For any $\ul{b}=(b_1, b_2, \ldots, b_r) \in B_{\ul{a}}$, if $2n=(\sum_{i=1}^r a_ib_i)-1 > N_{\ul{a},\ul{b}}^{(2)}$, then
for any global Arthur parameter $\psi$ of the form
$$
\psi=(\tau_1, b_1) \boxplus (\tau_2, b_2) \boxplus \cdots \boxplus (\tau_r, b_r)\in\wt{\Psi}_2(\Sp_{2n}),
$$
with $\tau_i\in\CA_\cusp(\GL_{a_i})$ for $i=1,2,\cdots,r$, $\wt{\Pi}_\psi(\Sp_{2n}) \cap \CA_2(\Sp_{2n})$ contains no cuspidal members.
\end{thm}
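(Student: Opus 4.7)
The plan is to mimic the proof of Theorem \ref{ncmain3}, but to replace the lexicographical bound supplied by Theorem \ref{ub} with the stronger dominance bound supplied by Part~(1) of Conjecture \ref{J14}, whose truth is now part of the hypothesis. The whole argument is again a contradiction packaged around the two structural inputs on cuspidal automorphic representations of $\Sp_{2n}(\BA)$ for totally imaginary $F$.

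I would argue by contradiction. Suppose there exists $\pi \in \wt{\Pi}_\psi(\Sp_{2n})\cap\CA_\cusp(\Sp_{2n})$. Since $F$ is totally imaginary, Theorem \ref{ti} produces an even partition $\ul{p}_\pi\in\Fp^m(\pi)$ of the form
\[
\ul{p}_\pi=[(2n_1)^{s_1}(2n_2)^{s_2}\cdots(2n_k)^{s_k}],\qquad 2n_1>2n_2>\cdots>2n_k,\qquad s_i\leq 4.
\]
In particular $\ul{p}_\pi$ satisfies condition~(2) in the definition of $N^{(2)}_{\ul{a},\ul{b}}$. On the other hand, Part~(1) of Conjecture \ref{J14}, which we are assuming, forces
\[
\ul{p}_\pi\leq \eta(\ul{p}_\psi)
\]
under the dominance order of partitions, so $\ul{p}_\pi$ also satisfies condition~(1) in the definition of $N^{(2)}_{\ul{a},\ul{b}}$. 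Since $|\ul{p}_\pi|=2n$, the very definition of $N^{(2)}_{\ul{a},\ul{b}}$ as a maximum yields
\[
2n=|\ul{p}_\pi|\leq N^{(2)}_{\ul{a},\ul{b}},
\]
contradicting the standing hypothesis $2n=(\sum_{i=1}^r a_ib_i)-1>N^{(2)}_{\ul{a},\ul{b}}$.

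There is really no hard step here once the two inputs are in place: the content of the theorem lies entirely in the sharpened bound one obtains by using dominance order rather than lexicographical order, and Theorem \ref{ti} guarantees that the partition which must be compared to $\eta(\ul{p}_\psi)$ has the restricted shape with multiplicities at most $4$, so that $N^{(2)}_{\ul{a},\ul{b}}$ is indeed the correct quantity to test against. The main conceptual obstacle is of course that we depend on the (still conjectural) Part~(1) of Conjecture \ref{J14}; any route that does not invoke this conjecture must fall back to Theorem \ref{ub} and hence to the weaker bound $N^{(1)}_{\ul{a},\ul{b}}$ of Theorem \ref{ncmain3}.
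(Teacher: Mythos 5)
Your argument is exactly the paper's proof: assume a cuspidal member exists, take the even partition $\ul{p}_\pi\in\Fp^m(\pi)$ from Theorem \ref{ti}, observe it satisfies both defining conditions of $N^{(2)}_{\ul{a},\ul{b}}$ (condition (2) from Theorem \ref{ti}, condition (1) from the assumed Part (1) of Conjecture \ref{J14}), so $2n=|\ul{p}_\pi|\le N^{(2)}_{\ul{a},\ul{b}}$, contradiction. Nothing to add; this is correct and matches the paper's route.
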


\begin{proof}
Assume that there is a $\pi \in \wt{\Pi}_\psi(\Sp_{2n})\cap\CA_\cusp(\Sp_{2n})$. By Part (1) of Conjecture \ref{J14},
for any $\ul{p} \in \frak{p}^m(\pi)$, which is a partition of $2n$, we must have that $\ul{p} \leq \eta(\ul{p}_{\psi})$
under the dominance order of partitions. In particular, the even partition $\ul{p}_{\pi} \in \frak{p}^m(\pi)$, constructed in \cite{GRS03},
enjoys this property. On the other hand, since $F$ is totally imaginary, by Theorem \ref{ti}, $\ul{p}_{\pi}$ has
the form $[(2n_1)^{s_1}(2n_2)^{s_2} \cdots (2n_r)^{s_r}]$ with $2n_1 > 2n_2 > \cdots > 2n_r$ and
$s_i \leq 4$ for $1 \leq i \leq r$. Hence, $\ul{p}_{\pi}$ satisfies the above two conditions defining the bound $N_{\ul{a},\ul{b}}^{(2)}$.
It follows that $N_{\ul{a},\ul{b}}^{(2)} \geq 2n=\lvert \ul{p}_\pi \rvert$. This contradicts the assumption that $2n > N_{\ul{a},\ul{b}}^{(2)}$.
\end{proof}

First, it is clear that $N_{\ul{a}} \geq N_{\ul{a},\ul{b}}^{(1)} \geq N_{\ul{a},\ul{b}}^{(2)}$. We expect that the bound $N_{\ul{a},\ul{b}}^{(2)}$ is sharp. Namely, for any $\ul{b}=(b_1, b_2, \ldots, b_r) \in B_{\ul{a}}$ with $\sum_{i=1}^r a_ib_i = N_{\ul{a},\ul{b}}^{(2)}+1$, we expect that
any global packet $\wt{\Pi}_\psi(\Sp_{N_{\ul{a},\ul{b}}^{(2)}})$ associated to any global Arthur parameter $\psi$ of the form
$$
\psi=(\tau_1, b_1) \boxplus (\tau_2, b_2) \boxplus \cdots \boxplus (\tau_r, b_r)\in\wt{\Psi}_2(\Sp_{N_{\ul{a},\ul{b}}^{(2)}}),
$$
with $\tau_i\in\CA_\cusp(\GL_{a_i})$ for $i=1,2,\cdots,r$, contains a cuspidal member.
An interesting problem is to figure out the explicit formula of the bounds $N_{\ul{a},\ul{b}}^{(1)}$ and $N_{\ul{a},\ul{b}}^{(2)}$
as functions of $\ul{a}$ and $\ul{b}$.
Secondly, one may easily write down the corresponding analogues of Theorem \ref{ncmain2} for bounds $N_{\ul{a},\ul{b}}^{(1)}$ and $N_{\ul{a},\ul{b}}^{(2)}$, we omit them here.
Finally, we give examples to indicate that $N_{\ul{a}} > N_{\ul{a},\ul{b}}^{(1)} > N_{\ul{a},\ul{b}}^{(2)}$.

Consider $\psi=(\tau_1, 1) \boxplus (\tau_2, 8)$, where $\tau_1\in\CA_\cusp(\GL_5)$ of orthogonal type, and $\tau_2\in\CA_\cusp(\GL_2)$
of symplectic type. By Remark \ref{rmkbv},
$$\eta(\ul{p}_{\psi})=(([1^58^2]^t)^-)_{\Sp}=[72^61]_{\Sp}=[62^7].$$
In this case, one has that $N_{\ul{a}}=(5+2)^2-1=48$. On the other hand, one has that $N_{\ul{a},\ul{b}}^{(1)}=24$ and $N_{\ul{a},\ul{b}}^{(2)}=16$.

In fact, $[4^42^4]$ is the only partition $\ul{p}$ that gives maximal $\lvert \ul{p} \rvert$, and satisfies the conditions:
$\ul{p} \leq \eta(\ul{p}_{\psi})$ under the lexicographical order of partitions, and $\ul{p}$ has the form
$[(2n_1)^{s_1}(2n_2)^{s_2} \cdots (2n_r)^{s_r}]$ with $2n_1 > 2n_2 > \cdots 2n_r$ and $s_i \leq 4$ for $1 \leq i \leq r$.
This shows that $N_{\ul{a},\ul{b}}^{(1)}=24$.

Also, $[4^22^4]$ is the only partition $\ul{p}$ that gives maximal $\lvert \ul{p} \rvert$, and satisfies the conditions:
$\ul{p} \leq \eta(\ul{p}_{\psi})$ under the dominance order of partitions, and $\ul{p}$ has the form
$[(2n_1)^{s_1}(2n_2)^{s_2} \cdots (2n_r)^{s_r}]$ with $2n_1 > 2n_2 > \cdots 2n_r$ and $s_i \leq 4$ for $1 \leq i \leq r$. This shows that $N_{\ul{a},\ul{b}}^{(2)}=16$.

Note that the bound $N_{\ul{a},\ul{b}}^{(1)}$ uses Theorem \ref{ub}, while the bound $N_{\ul{a},\ul{b}}^{(2)}$ needs the assumption that
Part (1) of Conjecture \ref{J14} holds.

\subsection{Examples}
We give examples of Arthur parameters $\psi$ such that $\wt{\Pi}_{\psi}(\Sp_{2n}) \cap \CA_2(\Sp_{2n})$ contains no cuspidal members.

{\bf Example 1:}\
Let $\tau\in\CA_\cusp(\GL_{2l})$ be such that $L(s, \tau, \wedge^2)$ has a pole at $s=1$. Consider the Arthur parameter $\psi=(\tau, 2m) \boxplus (1_{\GL_1(\BA)}, 1)$.
In this case, we have that $\ul{a}=(2l,1)$ and $\ul{b}=(2m,1)$. Since $a_1+a_2=2l+1$ is odd, we have that
$$
N_{\ul{a}}=(a_1+a_2)^2-1=(2l+1)^2-1.
$$
If $m > l+1$, then we have
$$
4ml=a_1b_1+a_2b_2-1=2l(2m)+1-1 > (2l+1)^2-1=N_{\ul{a}},
$$
and hence, by Theorem \ref{ncmain1} or Theorem \ref{ncmain2},
$\wt{\Pi}_{\psi}(\Sp_{4ml}) \cap \CA_2(\Sp_{4ml})$ contains no cuspidal members.

But, if in addition, $L(\frac{1}{2}, \tau) \neq 0$, we can construct a residual representation in $\wt{\Pi}_{\psi}(\Sp_{4ml}) \cap \CA_2(\Sp_{4ml})$ as follows.
Let $P_{2ml} = M_{2ml}N_{2ml}$ be the parabolic subgroup of $\Sp_{4ml}$ with Levi subgroup $M_{2ml} \cong \GL_{2l}^{\times m}$.
For any
$$\phi \in A(N_{2ml}(\BA)M_{2ml}(F) \bs \Sp_{4ml}(\BA))_{\Delta(\tau,m)},$$
following \cite{L76} and \cite{MW95}, a residual Eisenstein series can be defined by
$$
E(\phi,s)(g)=\sum_{\gamma\in P_{2ml}(F)\bks \Sp_{4ml}(F)}\lambda_s \phi(\gamma g).
$$
It converges absolutely for real part of $s$ large and has meromorphic continuation to the whole complex plane $\BC$. Since $L(\frac{1}{2}, \tau) \neq 0$, by \cite{JLZ13}, this Eisenstein series
has a simple pole at $\frac{m}{2}$, which is the right-most one.
Denote the representation generated by these residues at $s=\frac{m}{2}$
by $\CE_{\Delta(\tau, m)}$, which is square-integrable.
By \cite[Section 6.2]{JLZ13}, $\CE_{\Delta(\tau,m)}$ has the global Arthur parameter
$\psi=(\tau, 2m) \boxplus (1_{\GL_1(\BA)}, 1)$, and hence belongs to $\wt{\Pi}_{\psi}(\Sp_{4ml}) \cap \CA_2(\Sp_{4ml})$.

{\bf Example 2:}\
Consider a family of Arthur parameters of symplectic groups of the form
$$
\psi=(1_{\GL_1(\BA)}, b_1) \boxplus (\tau, b_2),
$$
where $b_1\geq 1$ is odd, $\tau\in\CA_\cusp(\GL_2)$ is of symplectic type and $b_2\geq 1$ is even.
By definition, $\ul{p}_{\psi} = [b_1b_2^2]$, and
$$\eta(\ul{p}_{\psi})=((\ul{p}_{\psi}^{-})_{\Sp})^t = ((\ul{p}_{\psi}^t)^{-})_{\Sp}=(([1^{b_1}]+[2^{b_2}])^{-})_{\Sp}.$$
It is clear that the biggest part occurring in the partition $\eta(\ul{p}_{\psi})$ is at most $3$.
Note that $2n=a_1b_1+a_2b_2-1=b_1+2b_2-1$.

Assume that $\pi$ belongs to $\wt{\Pi}_{\psi}(\Sp_{2n})\cap\CA_\cusp(\Sp_{2n})$ with the above given global Arthur parameter $\psi$.
By Theorem \ref{ub}, for any $\ul{p} \in \frak{p}^m(\pi)$, its biggest part is smaller than or equal to 3.
On the other hand, the partition $\ul{p}_{\pi} \in \frak{p}^m(\pi)$ constructed in \cite{GRS03} is even.
Hence, $\ul{p}_{\pi} = \{[2^n]\}$. Since $F$ is totally imaginary, by Theorem \ref{ti}, we must have that $n \leq 4$.
Hence, one can see that $N_{\ul{a}}=N_{\ul{a},\ul{b}}^{(1)}=N_{\ul{a},\ul{b}}^{(2)}=8$, where $\ul{a}=\{1,2\}$, $\ul{b}=\{b_1,b_2\}$.
It follows from Theorems \ref{ncmain1}--\ref{ncmain4}
that $\wt{\Pi}_{\psi}(\Sp_{2n}) \cap \CA_2(\Sp_{2n})$ contains
no cuspidal members except probably the following cases (see Figure 1 below)
$$
(b_1,b_2)=(1,2), (1,4), (3,2), (5,2).
$$
In particular, the global Arthur packet $\wt{\Pi}_{\psi}(\Sp_{2n})$ contains
no cuspidal members if $n\geq 5$.

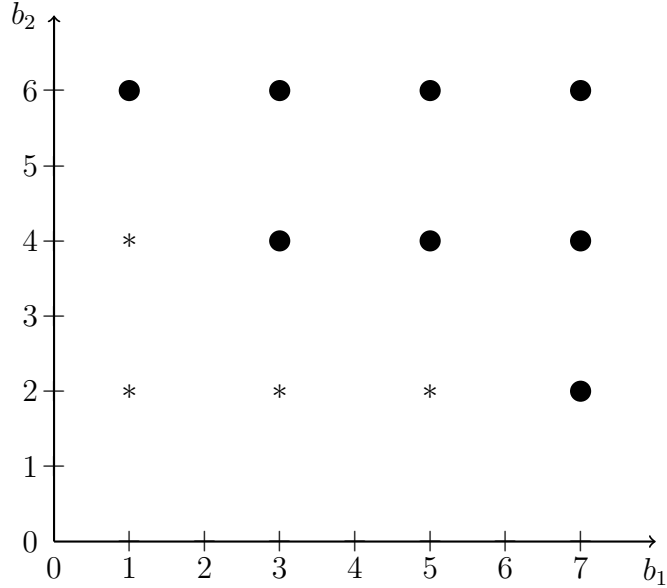
\begin{figure}[h]
\begin{tikzpicture}
   \tkzInit[xmax=7,ymax=6,xmin=0,ymin=0]
\tkzLabelX
\tkzLabelY
   \fill [black](7,2) circle(4pt);
   \fill [black](3,4) circle(4pt);
   \fill [black](5,4) circle(4pt);
   \fill [black](7,4) circle(4pt);
   \fill [black](1,6) circle(4pt);
   \fill [black](3,6) circle(4pt);
   \fill [black](5,6) circle(4pt);
   \fill [black](7,6) circle(4pt);
        \draw [thick,->] (0, 0) -- (8, 0);
        \draw [thick,->] (0, 0) -- (0, 7);
        \node at (8,-0.4) {$b_1$};
        \node at (-0.4, 7) {$b_2$};
        \node at (1,0) {$+$};
        \node at (3,0) {$+$};
        \node at (5,0) {$+$};
        \node at (2,0) {$+$};
        \node at (4,0) {$+$};
        \node at (6,0) {$+$};
        \node at (7,0) {$+$};
        \node at (0,2) {$+$};
        \node at (0,4) {$+$};
        \node at (0,6) {$+$};
        \node at (0,1) {$+$};
        \node at (0,3) {$+$};
        \node at (0,5) {$+$};
        \node at (1,2) {$*$};
        \node at (3,2) {$*$};
        \node at (5,2) {$*$};
        \node at (1,4) {$*$};
  \tkzText[above](4,-1){}
  \end{tikzpicture}
  \caption{Only *'s indicate that the Arthur packets possibly contain cuspidal members.}
\end{figure}

As we mentioned before that for generic global Arthur parameters $\phi\in\wt{\Phi}_2(\RG)$, one must have
$$
\wt{\Pi}_\phi(\RG)\cap\CA_2(\RG)\subset\CA_\cusp(\RG).
$$
In \cite{M08} and \cite{M11}, M{\oe}glin considers the problem on which non-generic global Arthur packets contains
non-cuspidal members, i.e. the square-integrable residual representations of $\RG(\BA)$. She gives
a conjecture on necessary and sufficient conditions for this problem and proves the conjecture when the square-integral representations with cohomology at infinity. Moreover, in \cite[Section 4.6]{M08}, M{\oe}glin predicts that her conjecture implies that for
a given global Arthur parameter $\psi=\boxplus_{i=1}^r (\tau_i, b_i)$ of a symplectic group $\Sp_{2n}$,
where $\tau_i\in\CA_\cusp(\GL_{a_i})$ is self-dual,
if there exist $1 \leq j_1 \leq r$ such that $b_{j_1} \geq a_{j_1}+a_{j_2}+b_{j_2}$, for any $1 \leq j_2 \neq j_1 \leq r$, then
$\wt{\Pi}_{\psi}(\Sp_{2n}) \cap \CA_2(\Sp_{2n})$ contains no cuspidal members.
Comparing to our discussions and examples above, one may easily find that
{\bf Example 1} gives examples that $\wt{\Pi}_{\psi}(\Sp_{2n}) \cap \CA_2(\Sp_{2n})$ contains no cuspidal members, which matches her prediction.
But, our {\bf Example 2} contains many more cases that $\wt{\Pi}_{\psi}(\Sp_{2n}) \cap \CA_2(\Sp_{2n})$ contains no cuspidal members, which can not be
determined by the condition suggested by M{\oe}glin.  We remark that {\bf Example 2} also includes cases that can not be decided
by the discussion in Section 3. One of such cases is that given by $(b_1,b_2)=(5,6)$.

\section{On generalized Ramanujan problem}

The generalized Ramanujan problem as proposed by P. Sarnak in \cite[Section 2]{Sar05} is to understand the behavior of the local components of
irreducible cuspidal automorphic representations of $\RG(\BA)$ for general reductive algebraic group $\RG$ defined over a number field $F$.
The generalized Ramanujan conjecture asserts that all local components of irreducible generic cuspidal representations are tempered.
When the group $\RG$ is not a general linear group, an irreducible cuspidal automorphic representation $\pi$ of $\RG(\BA)$ may have non-tempered
local components. Examples are those cuspidal members in a global Arthur packet with a non-generic global Arthur parameters. Hence it
is important also from this prospective to determine which non-generic global Arthur packets have no cuspidal members.

More precisely, the endoscopic classification of Arthur provides certain bounds for the exponents of the unramified local components of
the irreducible automorphic representations occurring in the discrete spectrum. It is clear that if one is able to determine
which non-generic global packets have no cuspidal members, the bounds of the exponents of the unramified local components of the cuspidal
spectrum would be much improved, which definitely helps us to the understanding of the generalized Ramanujan problem.

In this section, we take a preliminary step to understand the bounds of exponents of the unramified local components of the cuspidal
spectrum of $\Sp_{2n}$ based on the results obtained in Section 4.

For $\pi\in\CA_\cusp(\Sp_{2n})$ and $\theta \in \BR_{\geq 0}$, we say that $\pi$ satisfies $R(\theta)$ if each of its unramified components $\pi_v$ is the unique unramified component of the induced representation
$$\Ind_{B(F_v)}^{\Sp_{2n}(F_v)} \chi_1 \lvert \cdot \rvert^{\alpha_1} \otimes \chi_2 \lvert \cdot \rvert^{\alpha_2}
\otimes \cdots \otimes \chi_n \lvert \cdot \rvert^{\alpha_n},$$
where $B$ is the standard Borel subgroup of $\Sp_{2n}$, with the property that
for $1 \leq i \leq n$, $\chi_i$ are unitary unramified characters of $F_v^*$, such that $0 \leq \alpha_i \leq \theta$.

By the discussion in Remark \ref{rmkKR}, if there is a simple global Arthur parameter $(\chi, b)$ occurring as a formal summand
in the global Arthur parameter $\psi$ of $\pi$, one must have that $b \leq n+1$ if $n$ is even, and that $b \leq n$ if $n$ is odd,
where $\chi$ is a quadratic automorphic character of $\GL_1(\BA)$. In order to figure out an upper bound $\theta$ for
every $\pi\in\CA_\cusp(\Sp_{2n})$ to satisfy $R(\theta)$, one only needs to consider simple global Arthur parameters $(\tau,b)$ that
may occur in the global Arthur parameter $\psi$ of $\pi$, where $\tau\in\CA_\cusp(\GL_2)$ being self-dual.

First, assume that $n$ is even. Consider a global Arthur parameter of $\Sp_{2n}(\BA)$,
$\psi=(1_{\GL_1(\BA)},1) \boxplus (\tau, n)$, with $\tau\in\CA_\cusp(\GL_2)$ of symplectic type.
By using the bound of Kim-Sarnak (\cite{KS03}) and Blomer-Brumley (\cite{BB11})
towards the Ramanujan conjecture for $\GL_2$, which is $R(\frac{7}{64})$,
one may easily figure out that any $\pi\in\CA_\cusp(\Sp_{2n})\cap\wt{\Pi}_\psi(\Sp_{2n})$ satisfies $R(\frac{7}{64}+\frac{n-1}{2})$.
By the result of Kudla and Rallis (\cite{KR94}), for any $\pi\in\CA_\cusp(\Sp_{2n})\cap\wt{\Pi}_\psi(\Sp_{2n})$
(with $n$ even), if a simple global Arthur parameter $(\chi,b)$ occurs in the global Arthur parameter $\psi$ of $\pi$,
one must have that $b$ is at most $n+1$, and hence satisfies $R(\frac{n}{2})$.
Note that $\frac{7}{64}+\frac{n-1}{2} < \frac{n}{2}$. It follows that $\frac{n}{2}$ is a possible upper bound for all
$\pi\in\CA_\cusp(\Sp_{2n})$.
On the other hand, Piatetski-Shapiro and Rallis (\cite{PSR88})
construct a cuspidal member $\pi\in\wt{\Pi}_\psi(\Sp_{2n})$ (with $n$ even) that has the simple global Arthur parameter $(\chi, n+1)$ occurring in
the $\psi$. Therefore, we obtain that $\frac{n}{2}$ is the sharp upper bound for all $\pi\in\CA_\cusp(\Sp_{2n})$ when $n$ is even.
We state the conclusion of the above discussion as
\begin{prop}\label{ubeven}
Let $F$ be a number field. When $n$ is an even integer, all $\pi\in\CA_\cusp(\Sp_{2n})$ satisfy $R(\frac{n}{2})$, and the bound $\frac{n}{2}$ is
achieved by the $\pi\in\CA_\cusp(\Sp_{2n})$ constructed by
Piatetski-Shapiro and Rallis in \cite{PSR88}.
\end{prop}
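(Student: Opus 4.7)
The plan is to establish $R(n/2)$ for every $\pi\in\CA_\cusp(\Sp_{2n})$ with $n$ even by case analysis on the simple summands of its global Arthur parameter, and then to exhibit the Piatetski--Shapiro--Rallis examples as realizing this bound. By Arthur, $\pi\in\wt{\Pi}_\psi(\Sp_{2n})$ for some $\psi=\boxplus_{i=1}^r(\tau_i,b_i)$ with $\tau_i\in\CA_\cusp(\GL_{a_i})$ self-dual, and at an unramified place $v$ the Satake parameter of $\pi_v$ decomposes as the direct sum over $i$ of $\phi_{\tau_{i,v}}$ tensored with the $b_i$-dimensional representation of $\SL_2(\BC)$. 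Writing $\alpha_{i,v,k}$ for the Satake exponents of $\tau_{i,v}$, the exponents contributed by the $i$-th block to $\pi_v$ are bounded by $|\alpha_{i,v,k}|+(b_i-1)/2$, so the task reduces to bounding this quantity by $n/2$ for every $i$ and $k$.

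I would split the argument according to $a_i$. For $a_i=1$, the datum $\tau_i$ is a quadratic idele class character, so $\alpha_{i,v,k}=0$; the Kudla--Rallis pole analysis cited in Remark \ref{rmkKR} gives $b_i\leq n+1$ (using that $n$ is even), producing an exponent of at most $n/2$. For $a_i=2$, the summation identity $\sum_j a_jb_j=2n+1$ forces $b_i\leq n$, while the Kim--Sarnak and Blomer--Brumley bound gives $|\alpha_{i,v,k}|\leq 7/64$; the total is at most $(n-1)/2+7/64<n/2$. For $a_i\geq 3$, the same identity forces $b_i\leq (2n+1)/a_i\leq (2n+1)/3$, while Jacquet--Shalika gives $|\alpha_{i,v,k}|<1/2$, so the contribution is strictly less than $b_i/2\leq (2n+1)/6$, which is $<n/2$ for $n\geq 2$. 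Combining the three cases yields that $\pi$ satisfies $R(n/2)$.

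For sharpness I would invoke the Piatetski--Shapiro--Rallis construction in \cite{PSR88}: for $n$ even they produce irreducible cuspidal representations of $\Sp_{2n}(\BA)$ as theta lifts from orthogonal groups, lying in $\wt{\Pi}_\psi(\Sp_{2n})$ for a $\psi$ having $(\chi,n+1)$ as a simple summand for some quadratic character $\chi$. The block $(\chi,n+1)$ contributes the exponent $(n+1-1)/2=n/2$ to the Satake parameter at every unramified place, showing that the bound $n/2$ cannot be improved.

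The main obstacle will be securing the $a_i=1$ constraint $b_i\leq n+1$: this is the tightest of the three bounds and the only one that depends on serious arithmetic input, namely the Kudla--Rallis analysis of the right-most pole of the doubling $L$-function $L(s,\pi\times\chi)$, combined with Arthur's endoscopic classification to translate pole information into a constraint on Arthur parameters. The remaining cases follow routinely from the combinatorial identity $\sum_j a_jb_j=2n+1$ together with known Ramanujan-type bounds for $\GL_m$.
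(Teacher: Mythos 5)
Your proposal is correct and essentially identical to the paper's own argument: both reduce to bounding the exponent contribution of each simple summand $(\tau_i,b_i)$, invoke Kudla--Rallis (via Remark~\ref{rmkKR}) to get $b_i\leq n+1$ when $a_i=1$, combine the summation constraint $\sum a_jb_j=2n+1$ with Kim--Sarnak/Blomer--Brumley when $a_i=2$, and cite Piatetski-Shapiro--Rallis for sharpness. The only difference is that you explicitly carry out the $a_i\geq 3$ estimate using Jacquet--Shalika, whereas the paper merely remarks that those summands ``produce naturally a bound better than what obtained above.''
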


Next, assume that $n$ is odd. Consider a global Arthur parameter of $\Sp_{2n}(\BA)$,
$\psi=(\omega_{\tau},1) \boxplus (\tau, n)$, with $\tau\in\CA_\cusp(\GL_2)$ of orthogonal type and
$\omega_{\tau}$ the central character of $\tau$. By the same reason, one has that all $\pi\in\CA_\cusp(\Sp_{2n})\cap\wt{\Pi}_\psi(\Sp_{2n})$
satisfy $R(\frac{7}{64}+\frac{n-1}{2})$.
Again by \cite{KR94}, for any $\pi\in\CA_\cusp(\Sp_{2n})\cap\wt{\Pi}_\psi(\Sp_{2n})$
(with $n$ odd), if a simple global Arthur parameter $(\chi,b)$ occurs in the global Arthur parameter $\psi$ of $\pi$,
one must have that $b$ is at most $n$, and hence satisfies $R(\frac{n-1}{2})$.
Because $\frac{n-1}{2} < \frac{7}{64}+\frac{n-1}{2}$, we obtain that $\frac{7}{64}+\frac{n-1}{2}$ is a possible upper bound for any
$\pi\in\CA_\cusp(\Sp_{2n})$.

However, by Theorem \ref{ncmain1}, if we assume that $F$ is totally imaginary and $n \geq 5$, then for the Arthur parameters
$\psi=(\omega_{\tau},1) \boxplus (\tau, n)$ given above, there does not exist any cuspidal member in
$\wt{\Pi}_\psi(\Sp_{2n}) \cap \CA_2(\Sp_{2n})$. Hence, we obtain the following conclusion.

\begin{prop}\label{ubodd}
Assume that $F$ is totally imaginary and $n \geq 5$ is odd. Any $\pi\in\CA_\cusp(\Sp_{2n})$ satisfies $R(\frac{n-1}{2})$.
\end{prop}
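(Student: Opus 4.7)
The plan is to bound the exponents of the unramified Satake parameters of any cuspidal $\pi\in\CA_\cusp(\Sp_{2n})$ in terms of the simple summands of its global Arthur parameter $\psi=\boxplus_{i=1}^r(\tau_i,b_i)\in\wt{\Psi}_2(\Sp_{2n})$. At an unramified place $v$, the Satake parameters of $\pi_v$ form the union, over $i$, of the Satake parameters of $\tau_{i,v}$ twisted by $q_v^{(b_i-1)/2-j}$ for $j=0,1,\ldots,b_i-1$. Hence the maximal exponent appearing in $\pi_v$ equals $\max_i\bigl(\alpha_{i,v}+(b_i-1)/2\bigr)$, where $\alpha_{i,v}$ denotes the largest exponent in $\tau_{i,v}$. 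The task becomes showing
$$
\alpha_{i,v}+\frac{b_i-1}{2}\ \leq\ \frac{n-1}{2}
$$
for every simple summand $(\tau_i,b_i)$ that can occur in a parameter $\psi$ of some cuspidal $\pi$, under the hypotheses that $F$ is totally imaginary and $n\geq 5$ is odd.

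First I would dispose of the easy cases, organized by $a_i=\dim\tau_i$. When $a_i=1$, so $\tau_i=\chi$ is a quadratic character with $\alpha_{i,v}=0$, Remark \ref{rmkKR} (the Kudla--Rallis bound from \cite{KR94}) forces $b_i\leq n$ for $n$ odd, which gives the desired inequality. When $a_i=2$ with $\tau_i$ of symplectic type, $b_i$ must be even, so $b_i\leq n-1$, and the Kim--Sarnak/Blomer--Brumley bound $\alpha_{i,v}\leq 7/64$ from \cite{KS03} and \cite{BB11} yields $\alpha_{i,v}+(b_i-1)/2<(n-1)/2$. When $a_i\geq 3$, the known approximations toward Ramanujan on $\GL_{a_i}$ combined with the constraint $a_ib_i\leq 2n+1$ give a contribution bounded by roughly $(2n+1)/(2a_i)\leq (2n+1)/6$, which is less than $(n-1)/2$ for $n\geq 4$; the verification is a direct arithmetic check.

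The decisive case is $a_i=2$ with $\tau_i$ of orthogonal type, which allows the odd value $b_i=n$. Since $a_ib_i=2n$, the remaining contribution $1$ in $2n+1=\sum_j a_jb_j$ must come from a single simple summand $(\chi,1)$ with $\chi$ quadratic; the self-duality condition $\prod_j\omega_{\tau_j}^{b_j}=1$ then forces $\chi=\omega_\tau$ since $n$ is odd. Hence the only global Arthur parameter in which $(\tau_i,b_i)=(\tau,n)$ with $\tau$ orthogonal of $\GL_2$ can appear is
$$
\psi\ =\ (\omega_\tau,1)\boxplus(\tau,n).
$$
This is precisely the setting of Theorem \ref{ncmain1} with $\ul{a}=(2,1)$ and $\ul{b}=(n,1)$, for which $N_{\ul{a}}=(2+1)^2-1=8$. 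For $n\geq 5$ one has $2n\geq 10>N_{\ul{a}}$, so Theorem \ref{ncmain1} guarantees that $\wt{\Pi}_\psi(\Sp_{2n})\cap\CA_\cusp(\Sp_{2n})$ is empty. Therefore in this case $b_i\leq n-2$, and again $\alpha_{i,v}+(b_i-1)/2\leq 7/64+(n-3)/2<(n-1)/2$. Assembling the cases gives the bound $R\bigl(\tfrac{n-1}{2}\bigr)$.

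The hard part is precisely this orthogonal $\GL_2$ case: without the totally imaginary hypothesis and the restriction $n\geq 5$ it would produce the troublesome exponent $7/64+(n-1)/2$, which exceeds the desired bound. Eliminating it is exactly what Theorem \ref{ncmain1} provides, and so the proof is really an application of that theorem to one specific family of parameters. The other inequalities are routine, but care is needed to make them uniform in $a_i$ and to ensure no other parameter can yield $b_i=n$ for $a_i=2$ orthogonal, which is why the central-character count above matters.
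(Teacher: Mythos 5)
Your proposal is correct and follows essentially the same approach as the paper: reduce the exponent at unramified places to $\max_i\bigl(\alpha_{i,v}+(b_i-1)/2\bigr)$, observe that the only summand that could push this above $(n-1)/2$ is $(\tau,n)$ with $\tau\in\CA_\cusp(\GL_2)$ orthogonal (forcing $\psi=(\omega_\tau,1)\boxplus(\tau,n)$), and eliminate that packet's cuspidal members by Theorem \ref{ncmain1} with $\ul{a}=(2,1)$, $N_{\ul{a}}=8<2n$. You flesh out the $a_i\geq 3$ and central-character bookkeeping that the paper leaves implicit; the only nitpick is that your ``$n\geq 4$'' in the $a_i\geq 3$ arithmetic should be $n\geq 5$, which is harmless under the proposition's hypothesis.
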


We may expect that a simple global Arthur parameter $(\tau,n-1)$ with $n$ odd and $\tau\in\CA_\cusp(\GL_2)$ of symplectic type could
have cuspidal members in the global Arthur packet $\wt{\Pi}_\psi(\Sp_{2n})$, although we do not know how to construct them for the moment.
However, in the case the bound is $\frac{7}{64}+\frac{n-2}{2}$, which is less than $\frac{n-1}{2}$.
Also, for $\tau\in\CA_\cusp(\GL_a)$ (self-dual) with $a\geq 3$, the simple global Arthur parameters of type $(\tau, b)$ produce naturally
a bound better than what obtained above, and hence are omitted for further consideration.

It is a very interesting problem to determine the sharp upper bound $\theta$ for the cuspidal spectrum of $\Sp_{2n}(\BA)$ when $n$ is odd. This would involve a generalization or extension of the constructions by Piatetski-Shapiro and Rallis (\cite{PSR88}) and by Ikeda (\cite{Ik01} and \cite{Ik}).  We will get back to this issue in our future work.

\section{Small Cuspidal Automorphic Representations}\label{SCAR}

In this section, we discuss some criteria on the {\sl smallness} of cuspidal automorphic representations of $\Sp_{2n}(\BA)$ and
gives examples of small cuspidal automorphic representations, in addition to the examples constructed by Ikeda in \cite{Ik01}.
From now on, we assume that $F$ is a number field.

\subsection{Characterization of small cuspidal representations}
The characterization of small cuspidal automorphic representations will be given in terms of a vanishing condition on Fourier coefficients
related to the automorphic descent method (\cite{GRS11}), and also in terms of the notion of hyper-cuspidality in the sense of
Piatetski-Shapiro (\cite{PS83}). Also, our discussions cover the case of symplectic group $\Sp_{2n}(\BA)$ and the case of
the metaplectic double cover $\wt{\Sp}_{2n}(\BA)$ of $\Sp_{2n}(\BA)$ together.

\begin{thm}\label{spclt}
Assume that $\pi$ is an irreducible cuspidal automorphic  representation of $\Sp_{2n}(\BA)$ or $\wt{\Sp}_{2n}(\BA)$. Then $\frak{p}^m(\pi) = \{[2^n]\}$ if and only if $\pi$ has no nonzero Fourier coefficients attached to the partition $[41^{2n-4}]$.
\end{thm}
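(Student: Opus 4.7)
The forward direction is straightforward: the first partial sum of $[41^{2n-4}]$ is $4$, exceeding the first partial sum $2$ of $[2^n]$, so $[2^n] \not\geq [41^{2n-4}]$ in the dominance order. If $\pi$ had a nonzero Fourier coefficient attached to $[41^{2n-4}]$, then $[41^{2n-4}] \in \frak{n}(\pi)$, forcing some $\ul{p}' \in \frak{p}^m(\pi)$ with $\ul{p}' \geq [41^{2n-4}]$; under the hypothesis $\frak{p}^m(\pi) = \{[2^n]\}$ this gives $[2^n] = \ul{p}' \geq [41^{2n-4}]$, a contradiction.

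For the converse, assume the Fourier coefficient of $\pi$ attached to $[41^{2n-4}]$ vanishes. By Theorem \ref{li}, $\pi$ is non-singular, so $\ul{p}_\ns = [2^n] \in \frak{n}(\pi)$. To conclude $\frak{p}^m(\pi) = \{[2^n]\}$, it suffices to show no other $\ul{p}' \in \frak{n}(\pi)$ is $> [2^n]$ or incomparable to $[2^n]$. In either case $p_1' \geq 3$, since any partition of $2n$ with $p_1' \leq 2$ is dominated by $[2^n]$. I would split into two subcases by $p_1'$.

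If $p_1' \geq 4$, a partial-sum comparison (using $p_i' \geq 1$ within the support of $\ul{p}'$) gives $\ul{p}' \geq [41^{2n-4}]$. Theorem \ref{ggsglobal}, applied to an $F$-rational element of the $[41^{2n-4}]$-orbit lying in a suitable $\RG_{h'}(F)$-orbit closure, would transfer nonvanishing from $\ul{p}'$ to $[41^{2n-4}]$, contradicting our hypothesis; hence $\ul{p}' \notin \frak{n}(\pi)$. If $p_1' = 3$, the symplectic condition forces $p_2' = 3$, and one checks $\ul{p}' \geq [3^21^{2n-6}]$ (the case $n = 2$ is vacuous, since no symplectic partition of $4$ has $p_1 = 3$). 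Theorem \ref{ggsglobal} propagates nonvanishing from $\ul{p}'$ to $[3^21^{2n-6}]$, so the converse reduces to showing that the vanishing of the Fourier coefficient at $[41^{2n-4}]$ implies the vanishing of the Fourier coefficient at $[3^21^{2n-6}]$.

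The crux, and the main obstacle, is this last implication. The partitions $[3^21^{2n-6}]$ and $[41^{2n-4}]$ are incomparable in dominance (first partial sums $3$ versus $4$; second $6$ versus $5$), so Theorem \ref{ggsglobal} does not bridge them via orbit closure. I would establish the implication by an explicit root-exchange argument in the framework of Ginzburg-Rallis-Soudry \cite{GRS11}: starting from the Fourier integral for $[3^21^{2n-6}]$, identify a maximal isotropic subgroup of a Heisenberg-type quotient of the pertinent unipotent subgroup, and swap roots to rewrite the integrand as an outer integration over the unipotent radical associated to the $[41^{2n-4}]$-Fourier coefficient, with an inner integration whose control uses the cuspidality of $\pi$ to kill constant-term contributions at intermediate parabolic subgroups. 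The technical heart of the argument is the explicit identification of the correct root exchange, tailored to the geometry of the partitions $[3^21^{2n-6}]$ and $[41^{2n-4}]$ inside $\Sp_{2n}$.
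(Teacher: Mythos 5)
Your forward direction is correct and is slightly more elementary than the paper's: you argue directly from incomparability in the dominance order, whereas the paper appeals to the ``maximal at every stage'' property of the partition $\ul{p}_\pi$ produced by the Ginzburg--Rallis--Soudry construction in \cite{GRS03}. Both work.

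The converse direction, however, has two genuine gaps. First, in both subcases you invoke Theorem \ref{ggsglobal} to transfer nonvanishing from $\ul{p}'$ down to $[41^{2n-4}]$ (resp.\ to $[3^21^{2n-6}]$) on the sole strength of the dominance inequality $\ul{p}'\geq[41^{2n-4}]$ (resp.\ $\geq[3^21^{2n-6}]$). But Theorem \ref{ggsglobal} requires the much more restrictive condition $f\in\ol{\RG_{h'}(F)f'}$ for a specific $h'$, which dominance of partitions does not supply; the paper's own use of Theorem \ref{ggsglobal} inside Lemma \ref{spvandescent} is a carefully engineered special case ($f=f_1+f_2$ with an explicit torus degeneration $t\cdot f\to f_2$), not a general downward closure statement. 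You acknowledge this (``a suitable $\RG_{h'}(F)$-orbit closure'') but never produce it, so the subcase $p_1'\geq 4$ is not actually handled. Second, the ``crux'' implication (vanishing at $[41^{2n-4}]$ kills $[3^21^{2n-6}]$) is only sketched as a proposed root exchange; nothing is proved. The paper circumvents both problems by splitting on the \emph{parity} of $p_1$ rather than on $p_1\geq 4$ versus $p_1=3$, and by chaining through a different system of partitions: Lemma \ref{spvandescent} propagates the vanishing at $[41^{2n-4}]$ to all $[(2k)1^{2n-2k}]$ with $k\geq 2$; for $p_1$ even, \cite[Lemma 2.6]{GRS03} (or \cite[Lemma 3.1]{JL15}) produces a nonzero coefficient at $[(p_1)1^{2n-p_1}]$, forcing $p_1=2$; for $p_1$ odd, \cite[Lemma 3.3]{JL15} produces a nonzero coefficient at $[(p_1)^21^{2n-2p_1}]$ and \cite[Lemma 2.4]{GRS03} then yields a nonzero coefficient at some $[(2r)1^{2n-2r}]$ with $2r>2p_1$, contradicting the chain. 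Your proposed root exchange is essentially an attempt to rediscover the combination of \cite[Lemma 2.4]{GRS03} and Lemma \ref{spvandescent}, and until those (or equivalent) statements are actually established, the argument is incomplete.
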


\begin{proof}
First, assume that $\frak{p}^m(\pi) = \{[2^n]\}$. In this case, the even partition $\ul{p}_{\pi}$ constructed in \cite{GRS03} is exactly $[2^n]$. Note that $\ul{p}_{\pi}$ has the property that ``maximal at every stage" (see the proof of \cite[Theorem 2.7]{GRS03} or \cite[Remark 5.1]{JL15}), which implies directly that $\pi$ has no nonzero Fourier coefficients attached to the partition $[41^{2n-4}]$.

Next, assume that $\pi$ has no nonzero Fourier coefficients attached to the partition $[41^{2n-4}]$.
By Lemma \ref{spvandescent} below, $\pi$ has no nonzero Fourier coefficients attached to the partition
$[(2k)1^{2n-2k}]$, for any $2 \leq k \leq n$. Assume that $\ul{p}=[p_1p_2\cdots p_s] \in \frak{p}^m(\pi)$,
with $p_1 \geq p _2 \geq \cdots \geq p_s$.
If $p_1$ is odd, then one must have that $p_1 \geq 3$. By \cite[Lemma 3.3]{JL15},
$\pi$ has a nonzero Fourier coefficient attached to the partition $[(p_1)^21^{2n-2p_1}]$.
Then \cite[Lemma 2.4]{GRS03} shows that $\pi$ must have a nonzero Fourier coefficient attached to the partition
$[(2r)1^{2n-2r}]$ for some $2r > 2p_1 \geq 6$, which contradicts the assumption of the theorem.
Now, if $p_1$ is even, then
by \cite[Lemma 2.6]{GRS03} or \cite[Lemma 3.1]{JL15}, $\pi$ has a nonzero Fourier coefficient attached to the partition $[(p_1)1^{2n-p_1}]$. By assumption of the theorem, we must have that $p_1=2$. Hence we obtain that $2 = p_1 \geq p_2 \geq \cdots \geq p_s$, which implies that
$\ul{p} \leq [2^n]$.
On the other hand, by Theorem \ref{li}, the cuspidal $\pi$ must have a nonzero Fourier coefficient attached to the partition $[2^n]$.
It follows that for any $\ul{p}\in\frak{p}^m(\pi)$, the case that $\ul{p} < [2^n]$ can not happen.
Therefore, we conclude that $\ul{p} = [2^n]$, and hence $\frak{p}^m(\pi) = \{[2^n]\}$.
This completes the proof of the theorem.
\end{proof}

Let $\alpha = e_1 - e_{2n}$ be the longest positive root of $\Sp_{2n}$ and let $X_{\alpha}$ be the corresponding one-dimensional root subgroup.
Recall from \cite[Section 6]{PS83} that an automorphic function $\varphi$ is called {\it hypercuspidal} if
$$\int_{X_{\alpha}(F) \bs X_{\alpha}(\BA)} \varphi(xg) dx \equiv 0.$$ It is clear that any hypercupsidal function is automatically cuspidal. An automorpic representation $\pi$ of $\Sp_{2n}(\BA)$ or $\wt{\Sp}_{2n}(\BA)$ is called {\it hypercuspidal} if every $\varphi \in \pi$ is hypercuspidal.

For $0 \leq i \leq n-1$, let $P_i=M_iN_i$ be the parabolic subgroup
of $\Sp_{2n}$ with Levi subgroup $M \cong \GL_1^i \times \Sp_{2n-2i}$. Define a character of $N_i$ by $\psi_i(n)=\psi(\sum_{j=1}^{i} n_{j,j+1})$.
Let $\pi$ be an automorphic representation of $\Sp_{2n}(\BA)$ or $\wt{\Sp}_{2n}(\BA)$. For any $\varphi \in \pi$, let
$$\CF_i(\varphi)(g)=\int_{N_i(F) \bs N_i(\BA)} \varphi(ng) \psi_i^{-1}(n) dn.$$

\begin{lem}\label{spfe}
Let $\pi$ be a cuspidal automorphic representation of $\Sp_{2n}(\BA)$ or $\wt{\Sp}_{2n}(\BA)$. For any $\varphi \in \pi$, $\CF_i(\varphi)$ is a linear combination of $\CF_{i+1}(\varphi)$ and
Fourier coefficients attached to the partition $[(2i+2)1^{2n-2i-2}]$.
\end{lem}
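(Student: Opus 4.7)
My plan is to perform a Fourier expansion of $\CF_i(\varphi)$ along the unipotent subgroup $V_i := N_{i+1}/N_i$, viewed inside the symplectic factor $\Sp_{2n-2i}$ of the Levi $M_i$ as the unipotent radical of the maximal parabolic stabilizing the isotropic line $\langle e_{i+1}\rangle$. Concretely, $V_i$ is a Heisenberg group whose center $Z_i$ is the long-root subgroup $X_{2e_{i+1}}$ and whose abelianization $V_i/Z_i$ is spanned by the short-root subgroups $X_{e_{i+1}\pm e_j}$ for $j\in\{i+2,\dots,n\}$. The character $\psi_{i+1}$ extends $\psi_i$ across $V_i$ by picking out the coordinate along $X_{e_{i+1}-e_{i+2}}$ of $V_i/Z_i$; I denote this extension character of $V_i$ by $\chi$.

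A direct computation of the adjoint action first shows that $V_i$ normalizes $N_i$ and fixes every coordinate $n_{j,j+1}$ with $1\leq j\leq i$ appearing in $\psi_i$, so $\psi_i$ is $V_i$-invariant. Consequently $\CF_i(\varphi)$ is left-invariant under $V_i(F)$, which legitimizes the Fourier expansion of the function $v\mapsto \CF_i(\varphi)(vg)$ on $V_i(F)\bs V_i(\BA)$. Decomposing this function spectrally and applying Fubini yields, for every character $\eta$ of $V_i/Z_i$,
\[
\int_{V_i(F)\bs V_i(\BA)} \CF_i(\varphi)(vg)\,\eta(v)^{-1}\,dv \;=\; \int_{N_{i+1}(F)\bs N_{i+1}(\BA)} \varphi(ug)\,(\psi_i\cdot\eta)(u)^{-1}\,du.
\]
The distinguished character $\eta=\chi$ gives precisely $\CF_{i+1}(\varphi)(g)$, while all remaining $\eta$, together with the contributions arising from non-trivial central characters on $Z_i$ (treated via Stone--von Neumann and a polarization on $V_i/Z_i$), produce Fourier coefficients of $\varphi$ along $N_{i+1}$ (or a slight enlargement by $Z_i$) for a character whose dual nilpotent element $f_\eta$ lies in the orbit of type $[(2i+2)1^{2n-2i-2}]$. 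Theorem~\ref{ggsglobal} is used to move between conjugate Whittaker pairs when $f_\eta$ is not yet paired with a neutral element.

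The main obstacle in executing the plan is the orbit-level bookkeeping: for each non-$\chi$ character $\eta$ (and each non-trivial central character on $Z_i$), one must verify that the associated nilpotent $f_\eta$ indeed lies in the orbit $[(2i+2)1^{2n-2i-2}]$ and not in a strictly smaller one. This requires fixing an explicit Jordan-block representative of $[(2i+2)1^{2n-2i-2}]$, for instance supported on $\mathrm{span}(e_1,\dots,e_{i+1},e_{-(i+1)},\dots,e_{-1})$, computing its neutral element $h=\mathrm{diag}(2i+1,2i-1,\dots,1,-1,\dots,-(2i+1))$ on this block together with zeros elsewhere, and matching the associated character $\psi_{f_\eta}$ on $N_h$ against the one obtained from the Fourier expansion after a suitable $M_i$-conjugation. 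Once this identification is in place, reassembling all the pieces expresses $\CF_i(\varphi)$ as a linear combination of $\CF_{i+1}(\varphi)$ and Fourier coefficients attached to $[(2i+2)1^{2n-2i-2}]$, as claimed.
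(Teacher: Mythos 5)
Your overall strategy — Fourier-expanding $\CF_i(\varphi)$ along the Heisenberg quotient $V_i=N_{i+1}/N_i$ with center $Z_i=X_{2e_{i+1}}$ — is the same geometric setup as the paper's, but the accounting of which Fourier terms land where contains a genuine error, and one crucial ingredient is missing. You claim that the distinguished character $\chi$ on $V_i/Z_i$ gives $\CF_{i+1}(\varphi)$ while \emph{all remaining} $\eta$ (together with the nontrivial central characters) produce Fourier coefficients attached to $[(2i+2)1^{2n-2i-2}]$. This cannot be right for the nonzero $\eta\neq\chi$ with trivial central character: the group $\Sp_{2n-2i-2}(F)$, which sits in the Levi of $P_{i+1}$ and fixes $\psi_i$, acts transitively on the nonzero $F$-points of $V_i/Z_i$, so every such $\eta$ is rationally conjugate to $\chi$ and its nilpotent $f_\eta$ lies in the \emph{same} $\Sp_{2n}(F)$-orbit as $f_\chi$ — which is not $[(2i+2)1^{2n-2i-2}]$ (for instance, $f_\chi=\sum_{j=1}^{i+1}x_{-(e_j-e_{j+1})}$ has Jordan type $[(i+2)^21^{2n-2i-4}]$, whereas the orbit $[(2i+2)1^{2n-2i-2}]$ requires the long-root entry $x_{-2e_{i+1}}(\beta)$, $\beta\neq0$, exactly as in the proof of Lemma~\ref{spvandescent}). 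These nonzero $\eta\neq\chi$ therefore contribute right-translates $\CF_{i+1}(\varphi)(\gamma g)$, $\gamma\in H(F)\bs\Sp_{2n-2i-2}(F)$, not new orbit coefficients; the paper obtains precisely the sum $\sum_{\gamma}\CF_{i+1}(\varphi)(\gamma g)$ this way. Only the nontrivial characters on the center $Z_i$ give $[(2i+2)1^{2n-2i-2}]$.

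The second problem is that you never dispose of the constant term $\eta=0$ on $V_i/Z_i$ (with trivial character on $Z_i$). That term equals $\int_{N_{i+1}(F)\bs N_{i+1}(\BA)}\varphi(ug)\,\psi_i^{-1}(u)\,du$ with $\psi_i$ extended trivially, and it does not fit either bucket you propose; it must be shown to vanish, and the only reason it does is that $\pi$ is cuspidal. This is exactly the step in the paper's proof where cuspidality is invoked — notice that the lemma hypothesizes $\pi$ cuspidal, yet your plan never uses this hypothesis, which is a strong signal that something is missing. Concretely, the paper proceeds in two abelian expansions to sidestep the Stone--von Neumann machinery you invoke: first along the one-dimensional center $X_\alpha=Z_i$, whose nonconstant terms give the $[(2i+2)1^{2n-2i-2}]$ coefficients; then, on the constant term, along the abelian quotient $X\cong V_i/Z_i$, where the $\xi=0$ piece dies by cuspidality and the nonzero $\xi$ collapse by transitivity to $\sum_\gamma\CF_{i+1}(\varphi)(\gamma g)$. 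No appeal to Theorem~\ref{ggsglobal} is needed here. Adopting this two-stage expansion and correcting the orbit bookkeeping as above will repair your argument.
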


\begin{proof}
Let $\alpha$ be the root $e_{i+1}-e_{2n-i}$ and let $X_{\alpha}$ be the corresponding one-dimensional root subgroup. Since $X_{\alpha}$ normalizes $N_i$ and preserves the character $\psi_i$, one can take the Fourier expansion of $\CF_i(\varphi)$ along $X_{\alpha}(F) \bs X_{\alpha}(\BA)$. The non-constant terms give us exactly Fourier coefficients attached to the partition $[(2i+2)1^{2n-2i-2}]$. Now consider the constant term, that is $\int_{X_{\alpha}(F) \bs X_{\alpha}(\BA)} \CF_i(\varphi)(xg)dx$.

For $i+2 \leq j \leq 2n-i-1$, let $\alpha_j$ be the root $e_{i+1}-e_j$, and let $X_{\alpha_j}$ be the corresponding one-dimensional root subgroup. Let $X=\prod_{j=i+2}^{2n-i-1} X_{\alpha_j}$. Then, one can see that $X$ normalizes $N_iX_{\alpha}$ and preserves the character $\psi_i$. Here $\psi_i$ is extended trivially to $N_iX_{\alpha}$. Hence, one can take the Fourier expansion of $\int_{X_{\alpha}(F) \bs X_{\alpha}(\BA)} \CF_i(\varphi)(xg)dx$ along $X(F) \bs X(\BA)$, and obtain that
\begin{align*}
\ & \int_{X_{\alpha}(F) \bs X_{\alpha}(\BA)} \CF_i(\varphi)(xg)dx\\
= \ &  \sum_{\xi \in X(F)} \int_{X(F) \bs X(\BA)} \int_{X_{\alpha}(F) \bs X_{\alpha}(\BA)} \CF_i(\varphi)(xx'g)\psi_{\xi}^{-1}(x')dxdx'.
\end{align*}

Note that the constant term corresponding to $\xi=0$ is identically zero, since $\varphi \in \pi$ is cuspidal. Also note that $\Sp_{2n-2i-2}(F)$ acts on $X(F) \bs \{0\}$ transitively, and one can take a representative $\xi_0 =(1,0,\ldots,0)$. Denote the stabilizer of $\xi_0$ in $\Sp_{2n-2i-2}(F)$ by $H(F)$, which is a Jacobi group $\CH_{2n-2i-4}(F) \rtimes \Sp_{2n-2i-4}(F)$. Embed $\Sp_{2n-2i-2}$ into $\Sp_{2n}$ via $g \rightarrow \begin{pmatrix}
I_{i+1} & 0 & 0\\
0& g & 0\\
0 & 0 & I_{i+1}
\end{pmatrix}$, and identify it with its image under this embedding. Then the above Fourier expansion can be rewritten as
\begin{align*}
\ & \int_{X_{\alpha}(F) \bs X_{\alpha}(\BA)} \CF_i(\varphi)(xg)dx\\
= \ &  \sum_{\gamma \in H(F) \bs \Sp_{2n-2i-2}(F)} \int_{X(F) \bs X(\BA)} \int_{X_{\alpha}(F) \bs X_{\alpha}(\BA)} \CF_i(\varphi)(xx'\gamma g)\psi_{\xi_0}^{-1}(x')dxdx',
\end{align*}
which is exactly
$$\sum_{\gamma \in H(F) \bs \Sp_{2n-2i-2}(F)} \CF_{i+1}(\varphi)(\gamma g).$$
Therefore, $\CF_i(\varphi)$ is a linear combination of $\CF_{i+1}(\varphi)$ and Fourier coefficients attached to the partition $[(2i+2)1^{2n-2i-2}]$.
This completes the proof of the lemma.
\end{proof}

The following lemma is \cite[Key Lemma 3.3]{GRS05}, which has been used in the proof of Theorem \ref{spclt}. We state it here and give
a shorter proof, using Theorem \ref{ggsglobal}.

\begin{lem}[Key Lemma 3.3, \cite{GRS05}]\label{spvandescent}
Let $\pi$ be any automorphic representation of $\RG(\BA)=\Sp_{2n}(\BA)$ or $\wt{\Sp}_{2n}(\BA)$. If $\pi$ has no nonzero Fourier coefficients attached to the partition $[(2k)1^{2n-2k}]$, then $\pi$ has no nonzero Fourier coefficients attached to the partition $[(2k+2)1^{2n-2k-2}]$.
\end{lem}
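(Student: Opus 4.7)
The plan is to prove the contrapositive by a single application of Theorem \ref{ggsglobal}. Suppose $\pi$ has a nonzero Fourier coefficient attached to $[(2k+2)1^{2n-2k-2}]$; I will produce a nonzero Fourier coefficient attached to $[(2k)1^{2n-2k}]$ by exhibiting both nilpotent elements on a common symplectic basis and deforming one into the other inside a torus of the relevant centralizer.

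First I would set up the geometry. Decompose $V = V_1 \oplus V_0$ with $V_1$ a non-degenerate subspace of dimension $2k+2$, and choose a basis $v_1, \ldots, v_{2k+2}$ of $V_1$ so that $\omega(v_i, v_j) = (-1)^{i-1}\delta_{i+j, 2k+3}$. Let $f' \in \mathfrak{sp}_{2n}$ act on $V_1$ by $f' v_i = v_{i+1}$ for $i \leq 2k+1$, by $f' v_{2k+2} = 0$, and by zero on $V_0$; its Jordan type is $[(2k+2)1^{2n-2k-2}]$. Let $h'$ be the neutral element acting on $v_i$ with weight $2k+3-2i$ and by zero on $V_0$. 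By hypothesis $\CF_{h', f'}(\pi) \neq 0$. A direct eigenspace computation shows that $\mathfrak{g}^{h'}_{-2}$ has no components involving $V_0$, and that within $\mathfrak{sp}(V_1)$ it consists of the elements $A = \sum_{i=1}^{2k+1}\alpha_i E_{i+1, i}$ subject to the symplectic constraint $\alpha_i = \alpha_{2k+2-i}$.

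Next I would introduce the target nilpotent. Let $f \in \mathfrak{g}^{h'}_{-2}$ correspond to the tuple $(\alpha_i) = (0, 1, 1, \ldots, 1, 0)$, which satisfies the symplectic symmetry; its Jordan type on $V$ is $[(2k)1^{2n-2k}]$ by a short Jordan-chain calculation. Let $h$ be a neutral element for $f$ coming from any $\mathfrak{sl}_2$-triple. To verify the orbit-closure condition $f \in \overline{\RG_{h'}(F) f'}$, consider the one-parameter subgroup $t \in \BG_m$ inside the maximal torus $\GL_1^{k+1} \subset \Sp(V_1) \subset \RG_{h'}$ that scales $v_1 \mapsto t v_1$, $v_{2k+2} \mapsto t^{-1} v_{2k+2}$, and fixes the remaining $v_i$. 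A direct matrix computation gives $\Ad(t) f'$ with $\alpha$-coordinates $(t^{-1}, 1, \ldots, 1, t^{-1})$; rewriting in $s = t^{-1}$ yields an algebraic family extending across $s = 0$ with limit $f$, so $f \in \overline{\RG_{h'}(F) f'}$ in the Zariski topology.

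Theorem \ref{ggsglobal} now applies to the Whittaker pairs $(h, f)$ and $(h', f')$: $h$ is neutral for $f$, $f \in \overline{\RG_{h'}(F) f'}$, and $\CF_{h', f'}(\pi) \neq 0$, so $\CF_{h, f}(\pi) \neq 0$. This is a nonzero Fourier coefficient of $\pi$ attached to $[(2k)1^{2n-2k}]$, as required, and the same argument applies verbatim to $\wt{\Sp}_{2n}(\BA)$. The main obstacle will be identifying the one-parameter deformation in $\RG_{h'}$ that collapses the Jordan block from size $2k+2$ to size $2k$; once the basis is aligned with the chosen Gram matrix, the explicit torus action on the $\alpha_i$ and the compatibility with the symplectic symmetry reduce to routine matrix bookkeeping, and Theorem \ref{ggsglobal} takes care of the analytic content.
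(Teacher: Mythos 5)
Your argument is correct and follows essentially the same route as the paper's: both proofs reduce the contrapositive to a single application of Theorem \ref{ggsglobal}, and both exhibit the orbit-closure condition $f \in \overline{\RG_{h'}(F)\,f'}$ by a one-parameter torus inside $\RG_{h'}$ that kills the first Jordan link of the chain. The paper phrases this root-theoretically, writing $f' = \sum_{j=1}^{k} x_{\alpha_j}(\tfrac{1}{2}) + x_{\alpha_{k+1}}(\beta)$ with $\alpha_j = e_{j+1}-e_j$, $\alpha_{k+1}=-2e_{k+1}$, and degenerating by $t = \diag(t_1,1,\dots,1,t_1^{-1})$, which scales only the $x_{\alpha_1}$-term since $\alpha_1$ is the unique root in the chain involving $e_1$. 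Your version works with an explicit symplectic basis $v_1,\dots,v_{2k+2}$, and the two coordinates you kill, $\alpha_1$ and $\alpha_{2k+1}$, are precisely the two halves of the single root vector $x_{e_2-e_1}$ in $\mathfrak{sp}_{2n}$, so the two degenerations are literally the same element seen in different bookkeeping; the symplectic constraint $\alpha_i=\alpha_{2k+2-i}$ you derive is the statement that these halves must scale together.

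One small point worth tightening: the lemma is stated for Fourier coefficients attached to the \emph{partition} $[(2k+2)1^{2n-2k-2}]$, which is a union of $F$-rational orbits (the paper makes this explicit via the square class $\beta$). You fix a particular Gram matrix $\omega(v_i,v_j)=(-1)^{i-1}\delta_{i+j,2k+3}$ on $V_1$ and hence a particular rational orbit. This is harmless because the torus computation is insensitive to rescaling the Gram matrix by a constant $c\in F^{\times}$ — which is exactly what varies with $\beta$ — so the same calculation covers every rational orbit of that Jordan type, but it would be cleaner to say so, or to carry a free parameter as the paper does. Apart from this, the proof is complete and matches the paper's.
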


\begin{proof}
We recall that the $F$-rational nilpotent orbits $\CO$ of $\frak{sp}_{2n}(F)$ corresponding to the partition $[(2k+2)1^{2n-2k-2}]$
are parameterized by square classes $\beta \in F^* / (F^*)^2$.
For $1 \leq j \leq k$, define the root $\alpha_j=e_{j+1}-e_j$.
Let $\alpha_{k+1}=e_{2n+1-k-1}-e_{k+1}$.
For $1 \leq j \leq k+1$, let $x_{\alpha_j}$ be the corresponding root subspace in the Lie algebra.
A representative of $\CO$ can be chosen to be $f=\sum_{j=1}^{k} x_{\alpha_j}(\frac{1}{2})+x_{\alpha_{k+1}}(\beta)$. It is clear that $f$ can be decomposed as $f_1 +f_2$, where $f_1 =
x_{\alpha_1}(\frac{1}{2})$, and $f_2=\sum_{j=2}^{k} x_{\alpha_j}(\frac{1}{2})+x_{\alpha_{k+1}}(\beta)$. Note that the nilpotent orbit containing $f_2$ corresponds to the partition $[(2k)1^{2n-2k}]$.

For $f$ and $f_i$, $i=1,2$, By Jacobson-Morozov Theorem, there exist $\frak{sl}_2$-triples $(e,h,f)$, $(e_i,h_i,f_i)$, such that $[h,u]=-2u$, $[h_i, u_i]=-2u_i$. Let $\RG(\BA)_h$ be the centralizer of $h$ in $\RG(\BA)$, which contains the maximal split torus $\RT(\BA)$ of $\RG(\BA)$. It is clear that $f_2 \in \ol{\RG(\BA)_h f}$. Indeed, take $t = \diag(t_1, 1, \ldots, 1, t_1^{-1})$ with $t_1^{-1} \rightarrow 0$, then $t \cdot f \rightarrow f_2$.
By Theorem \ref{ggsglobal}, if $\pi$ has a nonzero Fourier coefficient attached to $f$, then it has a nonzero Fourier coefficient attached to $f_2$. Since by assumption, $\pi$ has no nonzero Fourier coefficients attached to $[(2k)1^{2n-2k}]$, we can conclude that $\pi$ also has no nonzero Fourier coefficients attached to the partition $[(2k+2)1^{2n-2k-2}]$.
\end{proof}

\begin{thm}\label{spclt2}
For an irreducible cuspidal automorphic  representation $\pi$ of $\Sp_{2n}(\BA)$ or $\wt{\Sp}_{2n}(\BA)$,
$\frak{p}^m(\pi) = \{[2^n]\}$ if and only if $\pi$ is hypercuspidal.
\end{thm}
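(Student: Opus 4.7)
The plan is to combine Theorem~\ref{spclt} with the iterative Fourier-expansion identities from the proof of Lemma~\ref{spfe} together with Lemma~\ref{spvandescent}. By Theorem~\ref{spclt}, the condition $\frak{p}^m(\pi)=\{[2^n]\}$ is equivalent to the vanishing of all Fourier coefficients of $\pi$ attached to $[41^{2n-4}]$. It therefore suffices to prove that $\pi$ is hypercuspidal if and only if $\pi$ has no nonzero $[41^{2n-4}]$-Fourier coefficient.

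For the forward direction (hypercuspidal $\Rightarrow$ no $[41^{2n-4}]$-FC), for any $F$-rational orbit $\CO$ of Jordan type $[4,1^{2n-4}]$, I would pick a representative $f$ realized as the principal nilpotent in an $F$-defined $\frak{sp}_4$-subalgebra of $\frak{sp}_{2n}$. With $\frak{sl}_2$-triple $(e,h,f)$ where $h=\diag(3,1,0,\ldots,0,-1,-3)$, the highest root vector $X_{2e_1}$ has $h$-weight $6$, hence $X_\alpha\subset\frak{n}_h=\frak{g}^h_{\geq 2}$. Since $f$ is a sum of negative simple root vectors of the embedded $\frak{sp}_4$, the Killing pairing gives $\kappa(f,X_{2e_1})=0$, so $\psi_f|_{X_\alpha}$ is trivial. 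Decomposing $N_h=X_\alpha\cdot N_h'$, one gets
\begin{equation*}
\CF_{h,f}(\varphi)(g)=\int_{N_h'(F)\bs N_h'(\BA)}\!\left(\int_{X_\alpha(F)\bs X_\alpha(\BA)}\varphi(xug)\,dx\right)\!\ol{\psi}_f(u)\,du,
\end{equation*}
whose inner integral vanishes for all $u,g$ by hypercuspidality, forcing $\CF_{h,f}(\varphi)\equiv 0$. For other $F$-orbits, the same argument works after conjugation by an element of $\Sp_{2n}(F)$, using the Weyl-conjugacy of long roots and the right-translation invariance of hypercuspidality.

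For the backward direction (no $[41^{2n-4}]$-FC $\Rightarrow$ hypercuspidal), Lemma~\ref{spvandescent} propagates the vanishing of $[41^{2n-4}]$-FCs to the vanishing of $[(2k)1^{2n-2k}]$-FCs for all $k\geq 2$, in particular the Whittaker Fourier coefficient $\CF_n(\varphi)$ attached to $[2n]$. The proof of Lemma~\ref{spfe} yields, for each $0\leq i\leq n-1$, the identity
\begin{equation*}
\CF_i(\varphi)(g)=\sum_{\gamma\in H_i(F)\bs\Sp_{2n-2i-2}(F)}\CF_{i+1}(\varphi)(\gamma g)+\bigl(\text{sum of }[(2i+2)1^{2n-2i-2}]\text{-FCs}\bigr).
\end{equation*}
Descending induction on $i$ from $n-1$ down to $1$, with the FC term vanishing at each stage and the inductive hypothesis giving $\CF_{i+1}\equiv 0$, yields $\CF_1(\varphi)\equiv 0$. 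The intermediate identity at $i=0$ in the proof of Lemma~\ref{spfe} reads $\int_{X_\alpha(F)\bs X_\alpha(\BA)}\varphi(xg)\,dx=\sum_\gamma\CF_1(\varphi)(\gamma g)$, which vanishes since $\CF_1\equiv 0$. Hence $\pi$ is hypercuspidal.

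The main technical obstacle is the verification in the forward direction that $X_\alpha$ sits inside $N_h$ and that $\psi_f$ restricts trivially to it, uniformly across all $F$-rational orbits of Jordan type $[4,1^{2n-4}]$; this hinges on carefully identifying the $\frak{sl}_2$-structure of the embedded $\frak{sp}_4$ and handling the orbit-dependence through $\Sp_{2n}(F)$-conjugation. Once this is in place, the descending iteration in the backward direction is formal.
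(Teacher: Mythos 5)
Your proof is correct and follows essentially the same strategy as the paper's: reduce via Theorem~\ref{spclt} to the equivalence between hypercuspidality and vanishing of all $[41^{2n-4}]$-Fourier coefficients, then verify the forward direction by locating $X_\alpha$ inside $N_h$ with $\psi_f|_{X_\alpha}$ trivial, and the backward direction by propagating vanishing through Lemma~\ref{spvandescent} and iterating Lemma~\ref{spfe} (the paper phrases the iteration as ``applying repeatedly'' and carries it out via the auxiliary group $Y$, which is exactly your ``intermediate identity at $i=0$''). Your treatment of the forward direction is in fact more explicit than the paper's one-line assertion that it is ``clear'' because $X_\alpha$ is the center of the maximal unipotent subgroup.
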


\begin{proof}
By Theorem \ref{spclt}, we just need to show that $\pi$ is hypercuspidal if and only if $\pi$ has no nonzero Fourier coefficients
attached to partition $[41^{2n-4}]$. First, it is clear that if $\pi$ is hypercuspidal, then $\pi$ has no nonzero Fourier coefficients
attached to partition $[41^{2n-4}]$, since $X_{\alpha}$， for the longest root $\alpha$, is the center of the standard maximal
unipotent subgroup of $\Sp_{2n}$. Now assume that $\pi$ has no nonzero Fourier coefficients attached to partition $[41^{2n-4}]$.
By Lemma \ref{spvandescent}, $\pi$ has no nonzero Fourier coefficients attached to partition $[(2k)1^{2n-2k}]$, for any $2 \leq k \leq n$.

Let $Y$ be the unipotent subgroup of $\Sp_{2n}$ consisting of elements $y=\begin{pmatrix}
1 & x & *\\
0 & I_{2n-2} & x^*\\
0 & 0 & 1
\end{pmatrix}$, where $x \in \Mat_{1 \times (2n-2)}$. It is clear that $Y$ normalizes $X_{\alpha}$. Hence, $f(g):=\int_{X_{\alpha}(F) \bs X_{\alpha}(\BA)} \phi(xg) dx$ can be viewed as an automorphic function over $Y(F) \bs Y(\BA)$. After taking Fourier expansion along $Y(F) \bs Y(\BA)$,
\begin{equation}\label{spclt2equ1}
f(g)=\sum_{\xi \in F^{2n-2}\bs \{0\}}
\int_{Y(F) \bs Y(\BA)} f(yg)\psi_{\xi}^{-1}(y) dy,
\end{equation}
since $\pi$ is a cuspidal.

Note that the action of $\Sp_{2n-2}(F)$ on $F^{2n-2}\bs \{0\}$ via conjugation is transitive. Take a representative $\xi_0=(1,0,\ldots,0)$. Then its stabilizer in $\Sp_{2n-2}(F)$ is a subgroup (denoted by $H$) consisting of elements $\begin{pmatrix}
1 & x & y \\
0 & g' & x^*\\
0 & 0 & 1
\end{pmatrix}$, where $x \in \Mat_{1 \times {2n-4}}$, $y \in F$, $g' \in \Sp_{2n-4}$. Embed $\Sp_{2n-2}$ into $\Sp_{2n}$ via the map $g \rightarrow \begin{pmatrix}
1 & 0 & 0 \\
0 & g & 0\\
0 & 0 & 1
\end{pmatrix}$, and identify $\Sp_{2n-2}$ with its image under this embedding.
Then, after changing of variables, the Fourier expansion in \eqref{spclt2equ1} can be rewritten as
\begin{equation}\label{spclt2equ2}
f(g)=\sum_{\gamma \in H \bs \Sp_{2n-2}(F)}
\int_{Y(F) \bs Y(\BA)} f(y\gamma g)\psi_{\xi_0}^{-1}(y) dy,
\end{equation}
which is exactly $\sum_{\gamma \in H \bs \Sp_{2n-2}(F)} \CF_1(f)(\gamma g)$.
Hence, to show that $f$ is identically zero, it is enough to show that $\CF_1(f)$ is identically zero.

Applying Lemma \ref{spfe} repeatedly, $\CF_1(f)$ is a linear combination of Fourier coefficients
attached to the partitions $[(2k)1^{2n-2k}]$,
$2 \leq k \leq n$, which are all identically zero, by the above discussion. Therefore, $f$ is identically zero, i.e., $\pi$ is hypercuspidal.

This completes the proof of the theorem.
\end{proof}

Combining Theorems \ref{spclt}, \ref{spclt2} with Theorem \ref{ti}, we have the following corollary.

\begin{thm}\label{nohc}
Assume that $F$ is a totally imaginary number field and $n \geq 5$.
Then $\Sp_{2n}(\BA)$ and $\wt{\Sp}_{2n}(\BA)$ have no cuspidal representations having nonzero Fourier coefficients attached to the partitions $[41^{4n-4}]$, and equivalently, have no nonzero hypercuspidal representations.
\end{thm}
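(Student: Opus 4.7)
The plan is to argue by contradiction, combining the characterization in Theorem \ref{spclt} (vanishing of the $[41^{2n-4}]$-Fourier coefficient forces $\frak{p}^m(\pi) = \{[2^n]\}$) with the structural constraint of Theorem \ref{ti} on the even partition in $\frak{p}^m(\pi)$ available when $F$ is totally imaginary.

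Suppose, toward a contradiction, that there exists a cuspidal automorphic representation $\pi$ of $\Sp_{2n}(\BA)$ (or of $\wt{\Sp}_{2n}(\BA)$) with $n \geq 5$ such that $\pi$ has a vanishing Fourier coefficient attached to $[41^{2n-4}]$. By Theorem \ref{spclt}, this vanishing is equivalent to
\[
\frak{p}^m(\pi) = \{[2^n]\}.
\]
On the other hand, since $F$ is totally imaginary, Theorem \ref{ti} produces an even partition $\ul{p}_\pi \in \frak{p}^m(\pi)$ of the form
\[
\ul{p}_\pi = [(2n_1)^{s_1}(2n_2)^{s_2}\cdots(2n_r)^{s_r}]
\]
with $2n_1 > 2n_2 > \cdots > 2n_r$ and $s_i \leq 4$ for every $i$.

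Combining these two facts, the partition $\ul{p}_\pi$ must equal $[2^n]$. But $[2^n]$ has a single distinct part (namely $2$) occurring with multiplicity $n$, so the form above forces $r=1$ and $s_1 = n$. The constraint $s_1 \leq 4$ then yields $n \leq 4$, contradicting the hypothesis $n \geq 5$. Hence no such cuspidal $\pi$ can exist, proving the statement about Fourier coefficients attached to $[41^{2n-4}]$.

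The equivalence with the non-existence of hypercuspidal representations is immediate from Theorem \ref{spclt2}, which asserts that a cuspidal $\pi$ satisfies $\frak{p}^m(\pi) = \{[2^n]\}$ if and only if $\pi$ is hypercuspidal; under the hypotheses above no such $\pi$ exists. There is no real obstacle here: all the work has already been done in Theorems \ref{ti}, \ref{spclt}, and \ref{spclt2}, and the present statement is a clean corollary obtained by juxtaposing the structural bound $s_i \leq 4$ (available only when $F$ is totally imaginary) against the single-part shape of $[2^n]$ for $n \geq 5$.
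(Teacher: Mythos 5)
Your proof is correct and follows exactly the same route as the paper: apply Theorem \ref{spclt} to force $\frak{p}^m(\pi)=\{[2^n]\}$, invoke Theorem \ref{ti} to get the even partition $\ul{p}_\pi$ with multiplicities $s_i\leq 4$ (valid because $F$ is totally imaginary), observe $[2^n]$ cannot have this form once $n\geq 5$, and cite Theorem \ref{spclt2} for the hypercuspidality reformulation. Worth noting: you have implicitly corrected a typo in the paper's statement (and proof) of the theorem, which writes ``nonzero Fourier coefficients attached to $[41^{4n-4}]$''; the exponent should be $2n-4$, and, to be consistent with Theorems \ref{spclt} and \ref{spclt2}, the condition ruled out is that $\pi$ has \emph{no} nonzero Fourier coefficient attached to $[41^{2n-4}]$ (equivalently, that $\pi$ is hypercuspidal), which is the reading you use.
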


\begin{proof}
Assume that $\Sp_{2n}(\BA)$ and $\wt{\Sp}_{2n}(\BA)$ has a nonzero cuspidal representation $\pi$ having nonzero Fourier coefficients attached to the partitions $[41^{4n-4}]$, equivalently, $\pi$ is hypercuspidal. Then, by Theorems \ref{spclt}, \ref{spclt2}, $\frak{p}^m(\pi) = \{[2^n]\}$. In particular, the even partitions $\ul{p}_{\pi}$ constructed in \cite{GRS03} is exactly $[2^n]$. On the other hand, since $F$ is totally imaginary, by Theorem \ref{ti}, $\ul{p}_{\pi}$ can not be $[2^n]$ because of $n \geq 5$.
Contradiction.
\end{proof}

\subsection{Examples of small cuspidal representations}

In this section, we assume that $F$ is not totally imaginary number field if $n \geq 5$. In order to provide examples of global Arthur packets of $\Sp_{2n}$ whose cuspidal automorphic members $\pi$ have the property that
$\Fp^m(\pi) = \{[2^n]\}$, we separate the discussion according the parity of the integer $n$.

\subsubsection{{\bf Case of $n=2e$.}}

\begin{prop}\label{speven2}
Any $\pi\in\wt{\Pi}_\psi(\Sp_{4e}) \cap\CA_\cusp(\Sp_{4e})$ with
$$
\psi=(\tau, 2i) \boxplus (1_{\GL_1(\BA)}, 4e-4i+1), e \leq 2i \leq 2e,
$$
and $\tau \in \CA_{\cusp}(\GL_2)$ of symplectic type, has the property that $\frak{p}^m(\pi) = \{[2^{2e}]\}$,
 and hence is small.
\end{prop}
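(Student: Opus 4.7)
The plan is to combine three ingredients: Li's non-singularity theorem (Theorem \ref{li}), the lexicographic upper bound of Theorem \ref{ub}, and the fact from \cite{JLS15} that every partition in $\Fp^m(\pi)$ is special.

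First, I would invoke Theorem \ref{li}, which implies that any $\pi \in \CA_\cusp(\Sp_{4e})$ has a non-zero Fourier coefficient attached to the non-singular partition $\udl{p}_\ns^{\Sp_{4e}} = [2^{2e}]$. This gives $[2^{2e}] \in \Fp(\pi)$, so some $\ul{p}_1 \in \Fp^m(\pi)$ satisfies $\ul{p}_1 \geq [2^{2e}]$ in the dominance order. Next, I would examine the Barbasch-Vogan dual $\eta(\ul{p}_\psi)$: since $\ul{p}_\psi = [(2i)^2, \, 4e-4i+1]$ has exactly three parts, the transpose $\ul{p}_\psi^t$ has first entry $3$, and this value is preserved by passage to $(\ul{p}_\psi^t)^-$ and by the symplectic collapse. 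Hence the largest part of $\eta(\ul{p}_\psi)$ is at most $3$, and by Theorem \ref{ub} the same bound holds for every $\ul{p} \in \Fp^m(\pi)$.

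The key combinatorial step is to show that no special symplectic partition of $4e$ has largest part equal to $3$. Such a partition would be of the form $\ul{p} = [3^a \, 2^b \, 1^c]$ with $a \geq 1$ and $3a + 2b + c = 4e$. Symplecticity forces $a$ and $c$ to be even, while specialness demands that the transpose $\ul{p}^t = [a+b+c, \, a+b, \, a]$ be orthogonal, i.e., each of its even parts must occur with even multiplicity. A short case analysis on whether $b$ and $c$ vanish shows that the orthogonality condition always forces $a$ to be odd, giving a contradiction. Consequently every $\ul{p} \in \Fp^m(\pi)$ is of the form $[2^k \, 1^{4e-2k}]$. The dominance constraint $\ul{p}_1 \geq [2^{2e}]$ then forces $k = 2e$ for the distinguished $\ul{p}_1$, so $\ul{p}_1 = [2^{2e}]$; and any other candidate $[2^k \, 1^{4e-2k}]$ with $k < 2e$ is strictly dominated by $[2^{2e}] \in \Fp(\pi)$, hence cannot be maximal.

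This yields $\Fp^m(\pi) = \{[2^{2e}]\}$, so $\pi$ is small. The main obstacle is the combinatorial exclusion of special partitions with largest part $3$: although elementary, it must be carried out case by case as $b$ and $c$ vanish or not, and requires carefully tracking the competing parity constraints on $a$ coming from symplecticity of $\ul{p}$ and orthogonality of $\ul{p}^t$.
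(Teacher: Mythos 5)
Your high-level structure (Theorem~\ref{li} for a lower bound, Theorem~\ref{ub} for an upper bound on the largest part, then a purely combinatorial exclusion of largest part $3$) is different from the paper's, but the combinatorial step at its core is incorrect, and this is a genuine gap rather than a stylistic difference.

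The error is in your characterization of special partitions for $\Sp_{2n}$. You assert that $\ul{p}$ is special iff $\ul{p}^t$ is orthogonal (even parts with even multiplicity), but the correct criterion is that $\ul{p}^t$ must be a \emph{symplectic}-type partition (odd parts with even multiplicity). One can see this directly from the image of the Barbasch--Vogan map, which parametrizes exactly the special orbits: for instance $\eta([5,2,2]) = ((\,[3,3,1,1,1]^-\,)_{\Sp})^t = [3,3,1,1]^t \cdots$ — in fact a short computation gives $\eta([5,2,2]) = [3^2\,1^2]$ in $\Sp_8$, so $[3^2\,1^2]$ is special; and indeed $[3^2\,1^2]^t = [4,2,2]$ has no odd parts at all, so the correct criterion is satisfied vacuously, while your criterion would fail (the part $4$ has multiplicity $1$). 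More generally $[3^2\,1^{4e-6}]$ is special for $\Sp_{4e}$, its transpose $[4e-4,\,2,\,2]$ having no odd parts. So the claim ``no special symplectic partition of $4e$ has largest part $3$'' is false, and the paper's own later Proposition~\ref{spodd3}, where $\frak{p}^m$ of a residual representation equals $\{[3^{2e}\,2]\}$, already shows that largest part $3$ is compatible with the specialness constraint from \cite{JLS15}.

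With the correct criterion, your argument breaks down precisely in the range $e \leq 2i < 2e$: there $\eta(\ul{p}_\psi)$ has largest part $3$ (e.g. $\eta(\ul{p}_\psi)=[3^e\,1^e]$ when $2i=e$), and the special partition $[3^2\,1^{4e-6}]$ satisfies $\ul{p} \leq \eta(\ul{p}_\psi)$ in the lexicographic order, so it survives Theorem~\ref{ub}. Moreover $[3^2\,1^{4e-6}]$ and $[2^{2e}]$ are \emph{incomparable} in dominance (compare the $4$th and $5$th partial sums), so the non-singularity lower bound from Theorem~\ref{li} cannot eliminate it either: both could a priori sit in $\frak{p}^m(\pi)$. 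The missing ingredient is a representation-theoretic ``raising'' argument, and this is exactly what the paper supplies: if some $\ul{p}\in\frak{p}^m(\pi)$ has $p_1=3$, then by \cite[Lemma 3.3]{JL15} $\pi$ has a nonzero Fourier coefficient attached to $[3^2\,1^{4e-6}]$, and by \cite[Lemma 2.4]{GRS03} this can be raised to a nonzero Fourier coefficient attached to $[(2r)\,1^{4e-2r}]$ for some $2r > 3$, forcing some element of $\frak{p}^m(\pi)$ to have largest part $\geq 4$ and contradicting Theorem~\ref{ub}. Specialness alone cannot replace this step.
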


\begin{proof}
For $\psi=(\tau, 2i) \boxplus (1_{\GL_1(\BA)}, 4e-4i+1)$, with $e \leq 2i \leq 2e$, we must have that
$\ul{p}_{\psi}=[(2i)^2 (4e-4i+1)]$ and $\eta(\ul{p}_{\psi})$ has largest part at most $3$.
Any $\pi\in\CA_\cusp(\Sp_{4e})\cap\wt{\Pi}_\psi(\Sp_{4e})$,
by Theorem \ref{ub}, any partition $\ul{p} \in \frak{p}^m(\pi)$ satisfies the property that
$\ul{p} \leq \ul{p}_{\psi}$ under the lexicographical order of partitions. Hence, any partition $\ul{p}=[p_1p_2 \cdots p_r] \in \frak{p}^m(\pi)$ has largest part $p_1 \leq 3$.

If $p_1=3$, then by \cite[Lemma 3.3]{JL15}, $\pi$ has a nonzero Fourier coefficient attached to the partition $[(p_1)^2 1^{4e-2p_1}]$. Furthermore,
by \cite[Lemma 2.4]{GRS03}, $\pi$ has a nonzero Fourier coefficient attached to the partition $[(2r)1^{4e-2r}]$ for some $2r > p_1 = 3$, which contradicts Theorem \ref{ub}. Hence we may have to take that $p_1=2$ and $\ul{p} \leq [2^{2e}]$ under the dominance order of partitions.
In this case, by Theorem \ref{li}, $\pi$ is non-singular. It follows again that any $\ul{p} \in \frak{p}^m(\pi)$ satisfies
the property that $\ul{p} \geq [2^{2e}]$ under the dominance order of partitions.
Therefore, we must have that $\frak{p}^m(\pi) = \{[2^{2e}]\}$.
\end{proof}

Note that if $2i < e$, then $4e-4i+1 > 2e+1$. By Remark \ref{rmkKR}, the global Arthur packet $\wt{\Pi}_\psi(\Sp_{4e})$ corresponding to
the global Arthur parameter
$$
\psi=(\tau, 2i) \boxplus (1_{\GL_1(\BA)}, 4e-4i+1)
$$
contains no cuspidal automorphic representations.

In the case of $2i=2e$, $\psi=(\tau, 2e) \boxplus (1_{\GL_1(\BA)}, 1)$, where $\tau \in \CA_{\cusp}(\GL_2)$ is of symplectic type.
If in addition $L(\frac{1}{2}, \tau) \neq 0$, then we can construct a residual representation in $\wt{\Pi}_{\psi}(\Sp_{4e})\cap\CA_2(\Sp_{4e})$ as follows.

Let $\Delta(\tau, e)$ be a Speh residual representation in the discrete spectrum of
$\GL_{2e}(\BA)$. For more information about the Speh residual representations, we refer to \cite{MW89}, or
\cite[Section 1.2]{JLZ13}. Let $P_r=M_rN_r$ be the maximal parabolic subgroup of
$\Sp_{2l}$ with Levi subgroup $M_r$ isomorphic to
$\GL_r \times \Sp_{2l-2r}$. Using
the normalization in \cite{Sh10},
the group $X_{M_{r}}^{\Sp_{2l}}$ of all continues homomorphisms from
$M_{r}(\BA)$ to $\BC^{\times}$, which is trivial
on $M_{r}(\BA)^1$ (see \cite{MW95}),
will be identified with $\BC$ by $s \rightarrow \lambda_s$.

For any $\phi \in A(N_{2e}(\BA)M_{2e}(F) \bs \Sp_{4e}(\BA))_{\Delta(\tau,e)}$, following \cite{L76} and \cite{MW95}, a
residual Eisenstein series
can be defined by
$$
E(\phi,s)(g)=\sum_{\gamma\in P_{2e}(F)\bks \Sp_{4e}(F)}\lambda_s \phi(\gamma g).
$$
It converges absolutely for real part of $s$ large and has meromorphic continuation to the whole complex plane $\BC$. Since $L(\frac{1}{2}, \tau) \neq 0$, by \cite{JLZ13}, this Eisenstein series has a simple pole at $\frac{e}{2}$, which is the right-most one.
Denote by $\CE_{\Delta(\tau, e)}$ the representation generated by these residues at $s=\frac{e}{2}$.
This residual representation is square-integrable.
By \cite[Section 6.2]{JLZ13}, the global Arthur parameter of $\CE_{\Delta(\tau,e)}$ is
$\psi=(\tau, 2e) \boxplus (1_{\GL_1(\BA)}, 1)$.
Hence $\CE_{\Delta(\tau,e)} \in \wt{\Pi}_{\psi}(\Sp_{4e})\cap\CA_2(\Sp_{4e})$.

%

By \cite[Theorem 1.3]{L13}, $\frak{p}^m(\CE_{\Delta(\tau,e)}) = \{[2^{2e}]\}$. For $\psi$ above, $\ul{p}_{\psi}= [(2e)^21]$ and $\eta(\ul{p}_{\psi})=[2^{2e}]$.
Hence, as mentioned in \cite{L13}, combining with Theorem \ref{ub},  all parts of Conjecture \ref{J14} have been proved for the Arthur parameter $\psi=(\tau, 2e) \boxplus (1_{\GL_1(\BA)}, 1)$ above.

\subsubsection{{\bf Case of $n=2e+1$.}}

\begin{prop}\label{spodd2}
Any $\pi\in\wt{\Pi}_{\psi}(\Sp_{4e+2})\cap\CA_\cusp(\Sp_{4e+2})$ with $\psi=(\tau, 2i+1) \boxplus (\omega_{\tau}, 4e-4i+1)$, $e \leq 2i \leq 2e$, and $\tau \in \CA_{\cusp}(\GL_2)$ of orthogonal type, has the property that $\frak{p}^m(\pi) = \{[2^{2e+1}]\}$, and hence is small.
\end{prop}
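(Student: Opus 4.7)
The plan is to imitate the argument given for Proposition \ref{speven2}, adjusting for the odd-dimensional setting. First I would verify that $\psi = (\tau, 2i+1) \boxplus (\omega_\tau, 4e-4i+1)$ is a bona fide element of $\wt{\Psi}_2(\Sp_{4e+2})$: the total dimension is $2(2i+1) + (4e-4i+1) = 4e+3 = 2(2e+1)+1$, the parity conditions from \cite{Ar13} are satisfied because $\tau$ and $\omega_\tau$ are both of orthogonal type with $b_1 = 2i+1$ and $b_2 = 4e-4i+1$ odd, and the central character condition $\omega_\tau^{2i+1}\omega_\tau^{4e-4i+1} = \omega_\tau^{4e-2i+2} = 1$ holds because $\omega_\tau$ is quadratic.

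Next I would compute $\eta(\ul{p}_\psi)$ where $\ul{p}_\psi = [(2i+1)^2(4e-4i+1)]$. Separating into the two subcases $2i+1 \geq 4e-4i+1$ and $2i+1 < 4e-4i+1$, one checks directly that the transpose $\ul{p}_\psi^t$ has largest part exactly $3$ in either case (specifically $[3^{4e-4i+1}2^{6i-4e}]$ or $[3^{2i+1}1^{4e-6i}]$), and that subtracting $1$ from the smallest part and then taking the symplectic collapse, as in Remark \ref{rmkbv}, preserves the bound. Hence $\eta(\ul{p}_\psi)$ has largest part at most $3$.

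Now let $\pi \in \wt{\Pi}_\psi(\Sp_{4e+2}) \cap \CA_\cusp(\Sp_{4e+2})$ and let $\ul{p} = [p_1 p_2 \cdots p_r] \in \frak{p}^m(\pi)$. By Theorem \ref{ub}, $\ul{p} \leq \eta(\ul{p}_\psi)$ under the lexicographic order, which forces $p_1 \leq 3$. If $p_1 = 3$, then \cite[Lemma 3.3]{JL15} yields a nonzero Fourier coefficient of $\pi$ attached to $[3^2 1^{4e-4}]$, and \cite[Lemma 2.4]{GRS03} then produces a nonzero Fourier coefficient attached to $[(2s)1^{4e+2-2s}]$ for some $2s > 3$; but any element of $\frak{p}^m(\pi)$ dominating this partition would have largest part at least $4$, contradicting the bound from Theorem \ref{ub}. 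Hence $p_1 = 2$ and $\ul{p} \leq [2^{2e+1}]$ under the dominance order.

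Finally, Theorem \ref{li} asserts that $\pi$ is non-singular, so $\pi$ has a nonzero Fourier coefficient attached to the non-singular partition $\ul{p}_\ns^{\Sp_{4e+2}} = [2^{2e+1}]$. This produces some $\ul{p}' \in \frak{p}^m(\pi)$ with $\ul{p}' \geq [2^{2e+1}]$ in dominance, and combined with the upper bound $\ul{p}' \leq [2^{2e+1}]$ just established, we obtain $\ul{p}' = [2^{2e+1}]$; by the same sandwich this forces every $\ul{p} \in \frak{p}^m(\pi)$ to equal $[2^{2e+1}]$, so that $\frak{p}^m(\pi) = \{[2^{2e+1}]\}$ and $\pi$ is small. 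The main obstacle is essentially combinatorial: performing the symplectic-collapse computation uniformly in the two subcases for $i$ so that the key bound ``largest part of $\eta(\ul{p}_\psi)$ is at most $3$'' is established cleanly; everything else is a transcription of the argument for Proposition \ref{speven2}.
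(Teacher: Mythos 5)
The paper omits this proof, remarking only that it is ``similar to that of Proposition~\ref{speven2}''; your argument is precisely the intended transcription of that proof to the odd case, carried out correctly (the extra verification that $\psi\in\wt{\Psi}_2(\Sp_{4e+2})$ and the explicit two-case computation of $\ul{p}_\psi^t$ are welcome details, and your sandwich argument at the end makes explicit what the paper's terser ``it follows again'' means). No gaps.
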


The proof of this proposition is similar to that of Proposition \ref{speven2}, and is omitted here. Note that if $2i < e$, then $4e-4i+1 > 2e+1$.
By Remark \ref{rmkKR}, the global Arthur packet $\wt{\Pi}_\psi(\Sp_{4e+2})$ associated to the global Arthur parameter
$$
\psi=(\tau, 2i+1) \boxplus (1_{\GL_1(\BA)}, 4e-4i+1)
$$
contains no cuspidal automorphic representations.

In the case of $2i=2e$, we can also construct a residual representation in $\wt{\Pi}_{\psi}(\Sp_{4e+2})\cap\CA_2(\Sp_{4e+2})$ as follows.

Since $\tau \in \CA_{\cusp}(\GL_2)$ is of orthogonal type, by the theory of automorphic descent of Ginzburg, Rallis and Soudry, there is a cuspidal representation $\pi'$ of $\SO_2^{\alpha}(\BA)$ which is anisotropic, such that $\pi'$ lifts to $\tau$ by the automorphic induction. Assume that there is an irreducible generic cuspidal representation $\pi$ of $\Sp_2(\BA)$ corresponding to $\pi'$ under the theta correspondence. Then the global Langlands functorial transfer from
$\Sp_2$ to $\GL_3$ takes $\pi$ to $\tau\boxplus 1$.

For any $\phi \in A(N_{2e}(\BA)M_{2e}(F) \bs \Sp_{4e+2}(\BA))_{\Delta(\tau,e) \otimes \pi}$, a
residual Eisenstein series can be defined as before by
$$
E(\phi,s)(g)=\sum_{\gamma\in P_{2e}(F)\bks \Sp_{4e+2}(F)}\lambda_s \phi(\gamma g).
$$
It converges absolutely for real part of $s$ large and has meromorphic continuation to the whole complex plane $\BC$. By \cite{JLZ13}, this Eisenstein series
has a simple pole at $\frac{e+1}{2}$, which is the right-most one.
Denote by $\CE_{\Delta(\tau, e) \otimes \pi}$ the representation generated by these residues at $s=\frac{e+1}{2}$.
This residual representation is square-integrable.
By \cite[Section 6.2]{JLZ13}, the global Arthur parameter of $\CE_{\Delta(\tau,e) \otimes \pi}$ is
$\psi=(\tau, 2e+1) \boxplus (\omega_{\tau}, 1)$. Hence $\CE_{\Delta(\tau,e) \otimes \pi} \in \wt{\Pi}_{\psi}(\Sp_{4e+2})\cap\CA_2(\Sp_{4e+2})$.


By \cite[Theorem 2.1]{JL15c}, $\frak{p}^m(\CE_{\Delta(\tau,e) \otimes \pi}) = \{[2^{2e+1}]\}$.
For $\psi=(\tau, 2e+1) \boxplus (\omega_{\tau}, 1)$ above, $\ul{p}_{\psi}=[(2e+1)^21]$ and $\eta(\ul{p}_{\psi})=[2^{2e+1}]$.
Hence, combining with Theorem \ref{ub}, all parts of Conjecture \ref{J14} have been proved for the Arthur parameter $\psi=(\tau, 2e+1) \boxplus (\omega_{\tau}, 1)$ above.

\subsubsection{{\bf Case $n=3e+1$.}}\
\begin{prop}\label{spodd3}
Any $\pi\in\wt{\Pi}_{\psi}(\Sp_{6e+2})\cap\CA_\cusp(\Sp_{6e+2})$ with $\psi=(\tau, 2e+1)$, and $\tau \in \CA_{\cusp}(\GL_3)$ of orthogonal type and with trivial central character,  has the property that $\frak{p}^m(\pi) = \{[2^{3e+1}]\}$, and hence is small.
\end{prop}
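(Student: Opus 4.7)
My plan is to follow the proof strategy of Proposition \ref{speven2}, adapted to the single-summand Arthur parameter $\psi=(\tau,2e+1)$ on $\Sp_{6e+2}$ with $\tau\in\CA_\cusp(\GL_3)$ of orthogonal type and trivial central character. The hypotheses on $\tau$ are exactly what is needed for $\psi$ to lie in $\wt{\Psi}_2(\Sp_{6e+2})$: one has $a_1b_1=3(2e+1)=6e+3=2n+1$, the central character condition $\omega_\tau^{2e+1}=1$ is automatic from triviality of $\omega_\tau$, and $b_1=2e+1$ is odd as required by the orthogonal-type parity.

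First I would compute the Barbasch-Vogan dual of $\ul{p}_\psi=[(2e+1)^3]$ using the formula $\eta(\ul{p})=((\ul{p}^t)^-)_{\Sp}$ from Remark \ref{rmkbv}. Transposing gives $\ul{p}_\psi^t=[3^{2e+1}]$; subtracting $1$ from the last part yields $(\ul{p}_\psi^t)^-=[3^{2e}2]$, which is already a symplectic partition since its only odd part $3$ occurs with even multiplicity $2e$. Hence $\eta(\ul{p}_\psi)=[3^{2e}2]$, a symplectic partition of $2n=6e+2$ whose largest part is $3$. By Theorem \ref{ub}, for any $\pi\in\wt{\Pi}_\psi(\Sp_{6e+2})\cap\CA_\cusp(\Sp_{6e+2})$ and any $\ul{p}=[p_1p_2\cdots]\in\Fp^m(\pi)$, one has $\ul{p}\leq[3^{2e}2]$ under the lexicographic order, so in particular $p_1\leq 3$.

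Next I would rule out the case $p_1=3$ via the same raising argument used in Propositions \ref{speven2} and \ref{spodd2}: \cite[Lemma 3.3]{JL15} produces a nonzero Fourier coefficient attached to $[3^21^{6e-4}]$, and \cite[Lemma 2.4]{GRS03} promotes this to a nonzero Fourier coefficient attached to $[(2r)1^{6e+2-2r}]$ for some $2r>3$, hence $2r\geq 4$. The first part of this new partition is at least $4$, which violates the lexicographic bound $\ul{p}\leq[3^{2e}2]$. Therefore $p_1=2$, and every $\ul{p}\in\Fp^m(\pi)$ is a symplectic partition of $6e+2$ with parts in $\{1,2\}$, so $\ul{p}\leq[2^{3e+1}]$ under the dominance order by a direct check on partial sums. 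Theorem \ref{li} (non-singularity of cuspidal forms) gives $[2^{3e+1}]\in\Fp(\pi)$, and maximality of $\ul{p}$ then forces $\ul{p}=[2^{3e+1}]$. This yields $\Fp^m(\pi)=\{[2^{3e+1}]\}$ and hence the smallness of $\pi$.

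The key step driving everything is the computation $\eta(\ul{p}_\psi)=[3^{2e}2]$, because this dual partition is precisely what forces $p_1\leq 3$ through Theorem \ref{ub}; the symplectic collapse happens to be trivial here, so the formula comes out clean. I do not expect a substantive obstacle beyond verifying that the raising lemmas of \cite{JL15,GRS03} apply verbatim in this one-summand setting, which is routine and parallel to the verifications already carried out in Propositions \ref{speven2} and \ref{spodd2}.
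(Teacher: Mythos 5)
Your argument is correct and is essentially identical to the paper's proof: you compute $\eta(\ul{p}_\psi)=[3^{2e}2]$, apply Theorem \ref{ub} to bound the largest part by $3$, rule out $p_1=3$ via \cite[Lemma 3.3]{JL15} and \cite[Lemma 2.4]{GRS03}, and then pin down $[2^{3e+1}]$ using Li's non-singularity theorem. The only additions beyond the paper's proof are the routine check that $\psi\in\wt{\Psi}_2(\Sp_{6e+2})$ and the explicit Barbasch--Vogan computation, both of which the paper leaves implicit.
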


\begin{proof}
For $\psi=(\tau, 2e+1)$, we must have that $\ul{p}_{\psi}=[(2e+1)^3]$ and $\eta(\ul{p}_{\psi})=[3^{2e}2]$.
Take any $\pi\in\wt{\Pi}_{\psi}(\Sp_{6e+2})\cap\CA_\cusp(\Sp_{6e+2})$.
By Theorem \ref{ub}, for any $\ul{p}=[p_1 p_2 \cdots p_r] \in \frak{p}^m(\pi)$, we have that $\ul{p} \leq [3^{2e}2]$ under the lexicographical order of partitions. It follows that
$$
3 \geq p_1 \geq \cdots \geq p_r.
$$
If $p_1=3$, then by \cite[Lemma 3.3]{JL15}, $\pi$ has a nonzero Fourier coefficient attached to the partition $[(p_1)^2 1^{6e+2-2p_1}]$. Then, by \cite[Lemma 2.4]{GRS03}, $\pi$ has a nonzero Fourier coefficient attached to the partition $[(2r)1^{6e+2-2r}]$ for some $2r > p_1 = 3$, which contradicts Theorem \ref{ub}. Hence $p_1=2$, and $\ul{p} \leq [2^{3e+1}]$ under the dominance order of partitions.
On the other hand, by Theorem \ref{li}, $\pi$ is non-singular. Hence, any $\ul{p} \in \frak{p}^m(\pi)$ also satisfies the property that $\ul{p} \geq [2^{3e+1}]$ under the dominance order of partitions.
Therefore, we have proved that $\frak{p}^m(\pi) = \{[2^{3e+1}]\}$.

This completes the proof of the proposition.
\end{proof}

We can also construct a residual representation in $\wt{\Pi}_{\psi}(\Sp_{6e+2})\cap\CA_2(\Sp_{6e+2})$ as follows.
Since $\tau \in \CA_{\cusp}(\GL_3)$ has trivial central character, and $L(s, \tau, \Sym^2)$ has a pole at $s=1$, by the theory of automorphic descent (\cite{GRS11}), there is an irreducible generic cuspidal automorphic representation $\pi$ of $\Sp_2(\BA)$ that
lifts to $\tau$.

For any $\phi \in A(N_{3e}(\BA)M_{3e}(F) \bs \Sp_{6e+2}(\BA))_{\Delta(\tau,e) \otimes \pi}$, a
residual Eisenstein series can also be defined by
$$
E(\phi,s)(g)=\sum_{\gamma\in P_{3e}(F)\bks \Sp_{6e+2}(F)}\lambda_s \phi(\gamma g).
$$
It converges absolutely for real part of $s$ large and has meromorphic continuation to the whole complex plane $\BC$. By \cite{JLZ13}, this Eisenstein series
has a simple pole at $\frac{e+1}{2}$, which is the right-most one.
Denote by $\CE_{\Delta(\tau, e) \otimes \pi}$ the representation generated by these residues at $s=\frac{e+1}{2}$.
This residual representation is square-integrable.
By \cite[Section 6.2]{JLZ13}, the global Arthur parameter of $\CE_{\Delta(\tau,e) \otimes \pi}$ is
$\psi=(\tau, 2e+1)$. Hence $\CE_{\Delta(\tau,e) \otimes \pi} \in \wt{\Pi}_{\psi}(\Sp_{6e+2})\cap\CA_2(\Sp_{6e+2})$.

For $\psi=(\tau, 2e+1)$ as above, $\ul{p}_{\psi}=[(2e+1)^3]$, and $\eta(\ul{p}_{\psi})=[3^{2e}2]$.
Hence, by Theorem \ref{ub}, for any $\pi\in\wt{\Pi}_{\psi}(\Sp_{6e+2})\cap\CA_\cusp(\Sp_{6e+2})$, we have that
for any $\ul{p}=[p_1 p_2 \cdots p_r] \in \frak{p}^m(\pi)$, $\ul{p} \leq [3^{2e}2]$ under the lexicographical order of partitions,
and hence, $\ul{p} \leq [3^{2e}2]$ under the dominance order of partitions also. By \cite[Theorem 2.1]{JL15c}, $\frak{p}^m(\CE_{\Delta(\tau,e) \otimes \pi}) = \{[3^{2e}2]\}$. Therefore, all parts of Conjecture \ref{J14}
have been proved for the global Arthur parameter $\psi=(\tau, 2e+1)$ as above.

\subsection{Small cuspidal representations over totally imaginary number fields}

In this section, let $F$ be a totally imaginary number field.
Assume that $\frak{p}^m(\pi)$ is a singleton. Then $\frak{p}^m(\pi)$ consists of exactly the partition $\ul{p}_{\pi}$ constructed in \cite{GRS03}.

Let $\ul{p}(\Sp_{2n}, F)$ be the smallest even partition of $2n$ of the form
$$
[(2n_1)^{s_1}(2n_2)^{s_2} \cdots (2n_r)^{s_r}],
$$
with $2n_1 > 2n_2 > \cdots > 2n_r$ and $s_i \leq 4$ for $1 \leq i \leq r$.

\begin{exmp}
$\ul{p}(\Sp_{8}, F) = [2^4]$, $\ul{p}(\Sp_{10}, F) = [42^3]$, $\ul{p}(\Sp_{12}, F) = [42^4]$, $\ul{p}(\Sp_{14}, F) = [4^22^3]$ and
$\ul{p}(\Sp_{26}, F) = [64^32^4]$.
\end{exmp}

\begin{thm}\label{lowerbound}
Let $F$ be a totally imaginary number field.
Assume that $\frak{p}^m(\pi)$ is a singleton for any $\pi\in\CA_\cusp(\Sp_{2n})$.
Then any partition $\ul{p} \in \frak{p}_{sm}^{\Sp_{2n}, F}$ is bigger than or equal to $\ul{p}(\Sp_{2n}, F)$.
\end{thm}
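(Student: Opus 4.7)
The plan is a short three-step reduction, largely an unwinding of definitions given the previously established Theorem~\ref{ti}.

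First, I would fix an arbitrary $\pi\in\CA_\cusp(\Sp_{2n})$ and apply Theorem~\ref{ti}: since $F$ is totally imaginary, there exists an even partition
$$
\ul{p}_\pi \;=\; [(2n_1)^{s_1}(2n_2)^{s_2}\cdots(2n_r)^{s_r}],\qquad 2n_1>2n_2>\cdots>2n_r,\ s_i\le 4,
$$
lying in $\Fp^m(\pi)$. Under the standing hypothesis that $\Fp^m(\pi)$ is a singleton, its unique element must be $\ul{p}_\pi$; in particular every element of $\Fp^m(\pi)$ is an even partition of the shape prescribed in the definition of $\ul{p}(\Sp_{2n},F)$.

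Second, by definition, $\ul{p}(\Sp_{2n},F)$ is the smallest partition of this form (with distinct parts $2n_1>\cdots>2n_r$ and multiplicities $s_i\le 4$) under the dominance order of partitions. Consequently, for every $\pi\in\CA_\cusp(\Sp_{2n})$, one has $\ul{p}_\pi\ \ge\ \ul{p}(\Sp_{2n},F)$. By the definition in \eqref{smallpartitions}, $\Fp_{sm}^{\Sp_{2n},F}$ consists of the dominance-minimal elements of
$$
\bigcup_{\pi\in\CA_\cusp(\Sp_{2n})}\Fp^m(\pi)\;=\;\{\ul{p}_\pi \mid \pi\in\CA_\cusp(\Sp_{2n})\}.
$$
Since every member of this collection dominates $\ul{p}(\Sp_{2n},F)$, so does every minimal element, which is exactly the claim.

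I do not anticipate a serious obstacle: the substantive input is Theorem~\ref{ti}, which together with the singleton hypothesis forces partitions in $\Fp^m(\pi)$ to respect the multiplicity bound $s_i\le 4$ peculiar to the totally imaginary case. The only point requiring combinatorial care is the well-definedness of $\ul{p}(\Sp_{2n},F)$ as a (unique) dominance-minimum in its defining family; this is checked directly, as illustrated by the examples $\ul{p}(\Sp_{10},F)=[42^3]$, $\ul{p}(\Sp_{14},F)=[4^22^3]$, $\ul{p}(\Sp_{26},F)=[64^32^4]$ preceding the theorem, and can be justified in general by an inductive construction that greedily minimizes the largest part subject to the multiplicity-four constraint.
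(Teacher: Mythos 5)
Your proof is correct and is the argument the paper itself intends; Theorem~\ref{lowerbound} is stated without a proof, but the text immediately preceding it already records that, under the singleton hypothesis, $\frak{p}^m(\pi)=\{\ul{p}_\pi\}$ with $\ul{p}_\pi$ the even partition of Theorem~\ref{ti}, from which the conclusion follows by the definition of $\ul{p}(\Sp_{2n},F)$. You also correctly isolate the one substantive combinatorial point --- that the greedy even partition (filling the smallest parts to multiplicity four first) is a genuine dominance-\emph{minimum} of the defining family, so that ``smallest'' is well-defined --- and your sketch via maximizing the multiplicity of small parts is the right justification.
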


\end{document}